\def\B{\mathscr B}
\def\C{\mathbb C}
\def\d{\mathrm{d}}
\def\dom{\mathcal D}
\def\g{\mathfrak g}
\def\GL{\mathrm{GL}}
\def\H{\mathcal H}
\def\L{\mathscr L}
\def\lone{\mathrm{L}^1}
\def\ltwo{\mathrm{L}^2}
\def\linf{\mathrm{L}^\infty}
\def\N{\mathbb N}
\def\Q{\mathbb Q}
\def\R{\mathbb R}
\def\S{\mathbb S}
\def\SO{\mathrm{SO}}
\def\SU{\mathrm{SU}}
\def\so{\mathfrak{so}}
\def\su{\mathfrak{su}}
\def\T{\mathbb T}
\def\U{\mathrm{U}}
\def\fraku{\mathfrak{u}}
\def\Z{\mathbb Z}
\def\Ad{\mathop{\mathrm{Ad}}\nolimits}
\def\diag{\mathop{\mathrm{diag}}\nolimits}
\def\e{\mathop{\mathrm{e}}\nolimits}
\DeclareMathOperator*{\essinf}{ess\hspace{1pt}inf}
\DeclareMathOperator*{\esssup}{ess\hspace{1pt}sup}
\def\det{\mathop{\mathrm{det}}\nolimits}
\def\Tr{\mathop{\mathrm{Tr}}\nolimits}
\DeclareMathOperator*{\slim}{s\hspace{0.1pt}-\hspace{0.1pt}lim}
\DeclareMathOperator*{\ulim}{u\hspace{0.1pt}-\hspace{0.1pt}lim}
\newtheorem{Theorem}{Theorem}[section]
\newtheorem{Remark}[Theorem]{Remark}
\newtheorem{Lemma}[Theorem]{Lemma}
\newtheorem{Corollary}[Theorem]{Corollary}
\newtheorem{Proposition}[Theorem]{Proposition}
\newtheorem{Example}[Theorem]{Example}
\begin{document}


\title{Degree, mixing, and absolutely continuous spectrum of cocycles with values in
compact Lie groups}

\author{R. Tiedra de Aldecoa\footnote{Supported by the Chilean Fondecyt Grant 1130168
and by the Iniciativa Cientifica Milenio ICM RC120002 ``Mathematical Physics'' from
the Chilean Ministry of Economy.}}

\date{\small}
\maketitle
\vspace{-1cm}

\begin{quote}
\emph{
\begin{itemize}
\item[] Facultad de Matem\'aticas, Pontificia Universidad Cat\'olica de Chile,\\
Av. Vicu\~na Mackenna 4860, Santiago, Chile
\item[] \emph{E-mail:} rtiedra@mat.puc.cl
\end{itemize}
}
\end{quote}


\begin{abstract}
We consider skew products transformations
$$
T_\phi:X\times G\to X\times G,~~(x,g)\mapsto\big(F_1(x),g\;\!\phi(x)\big),
$$
where $X$ is a compact manifold with probability measure $\mu_X$, $G$ a compact Lie
group with Lie algebra $\frak g$, $F_1:X\to X$ the time-one map of a
measure-preserving flow, and $\phi\in C^1(X,G)$ a cocycle. Then, we define the
degree of $\phi$ as a suitable function $P_\phi M_\phi:X\to\frak g$, we show that the
degree of $\phi$ transforms in a natural way under Lie group homomorphisms and under
the relation of $C^1$-cohomology, and we explain how it generalises previous
definitions of degree of a cocycle. For each finite-dimensional irreducible unitary
representation $\pi$ of $G$, and $\frak g_\pi$ the Lie algebra of $\pi(G)$, we define
in an analogous way the degree of $\pi\circ\phi$ as a suitable function
$P_{\pi\circ\phi}M_{\pi\circ\phi}:X\to\frak g_\pi$. If $F_1$ is uniquely ergodic and
the functions $\pi\circ\phi$ are diagonal, or if $T_\phi$ is uniquely ergodic, then
the degree of $\phi$ reduces to a constant in $\frak g$ given by an integral (average)
over $X$. As a by-product, we obtain that there does not exist uniquely ergodic skew
products $T_\phi$ with nonzero degree if $G$ is a connected semisimple compact Lie
group.

Next, we show that $T_\phi$ is mixing in the orthocomplement of the kernel of
$P_{\pi\circ\phi}M_{\pi\circ\phi}$, and under some additional assumptions we show that
$U_\phi$ has purely absolutely continuous spectrum in that orthocomplement if
$(iP_{\pi\circ\phi}M_{\pi\circ\phi})^2$ is strictly positive. Summing up these
individual results for each representation $\pi$, one obtains a global result for the
mixing property and the absolutely continuous spectrum of $T_\phi$. As an application,
we present in more detail four explicit cases: when $G$ is a torus,
$G=\mathrm{SU}(2)$, $G=\mathrm{SO}(3,\mathbb R)$, and $G=\mathrm{U}(2)$. In each case,
the results we obtain are either new, or generalise previous results in particular
situations.

Our proofs rely on new results on positive commutator methods for unitary operators in
Hilbert spaces.
\end{abstract}

\textbf{2010 Mathematics Subject Classification:}  37A25, 37A30, 37C40, 47A35, 58J51.

\smallskip

\textbf{Keywords:} Cocycles, skew products, Lie groups, degree, mixing, continuous
spectrum, commutator methods.

\section{Introduction and main results}
\setcounter{equation}{0}

Let $T_\phi:\T^2\to\T^2$ be the skew product transformation of the $2$-torus given by
\begin{equation}\label{eq_skew_torus_1}
T_\phi(x,z):=\big(x\e^{2\pi i\alpha},z\;\!\phi(x)\big),
\quad\phi\in C^1(\T,\T),~\alpha\in\R\setminus\Q.
\end{equation}
Since the works of H. Anzai \cite{Anz51} and H. Furstenberg \cite{Fur61}, it is known
that the scalar
\begin{equation}\label{eq_skew_torus_2}
M_{\phi,\star}
:=\int_\T\d\mu_\T(x)\left(\frac\d{\d t}\Big|_{t=0}\;\!
\phi\big(x\e^{2\pi it\alpha}\big)\right)\phi(x)^{-1}
\in i\;\!\R\quad\hbox{($\mu_\T$ Haar measure on $\T$),}
\end{equation}
called degree of the cocycle $\phi$, plays an important role in the study of the
ergodic properties and spectral properties of $T_\phi$. In particular, if
$M_{\phi,\star}\ne0$ and $\phi$ is absolutely continuous, then $T_\phi$ is uniquely
ergodic with respect to the product measure $\mu_\T\otimes\mu_\T$ and $T_\phi$ is
mixing in the orthocomplement of functions depending only on the first variable
\cite{ILR93}. Furthermore, if the derivative of $\phi$ satisfies some additional
regularity assumption (for instance bounded variation or Dini-continuity), then
$T_\phi$ has even Lebesgue spectrum with uniform countable multiplicity in the
orthocomplement of functions depending only on the first variable \cite{ILR93,Tie15}.

Some results on the degree of cocycles $\phi$ and its relation with the ergodic
properties and spectral properties of skew products $T_\phi$ have also been obtained
for more general cocycles $\phi:X\to G$ from a standard Borel space $X$ to a compact
Lie group $G$ (see for instance
\cite{Fra00,Fra00_2,Fra04,Fur61,GLL91,Iwa97_2,ILR93,Kar13,Tie15_2,Tie15_3,Tie15}).
However, in each case, some significant restriction is imposed on the cocycle $\phi:$
the base space $X$ is assumed to be a torus, the fiber group $G$ is assumed to be
particular compact Lie group (torus, $\SU(2)$, semisimple,\,\ldots), the cocycle
$\phi$ is assumed to be cohomologous in some way to a diagonal cocycle, etc. Our
purpose in this paper is to present a mathematical framework suited for a general
definition of the degree a cocycle $\phi$ with values in a compact Lie group $G$ and
the obtention of mixing and spectral properties of associated skew products $T_\phi$,
without imposing any of the aforementioned restriction.

Our results are the following. We consider skew products
$$
T_\phi:X\times G\to X\times G,~~(x,g)\mapsto\big(F_1(x),g\;\!\phi(x)\big),
$$
where $X$ is a compact manifold with probability measure $\mu_X$, $G$ a compact Lie
group with Lie algebra $\g$ and normalised Haar measure $\mu_G$, $F_1:X\to X$ the
time-one map of a $C^1$ measure-preserving flow $\{F_t\}_{t\in\R}$ on $X$, and
$\phi\in C^1(X,G)$ a cocycle. The base space $X$ and the diffeomorphism $F_1$ are
chosen in this way to have at disposition analogues of the derivative
$\frac\d{\d t}\big|_{t=0}$ and the flow
$\T\times\R\ni(x,t)\mapsto x\e^{2\pi it\alpha}\in\T$ appearing in the particular case
\eqref{eq_skew_torus_1}-\eqref{eq_skew_torus_2}.

Motivated by previous definitions of K. Fr{\c{a}}czek, N. Karaliolios and the author
\cite{Fra00_2,Kar13,Tie15_2}, we define the degree of the cocycle $\phi:X\to G$ as a
function $P_\phi M_\phi:X\to\g$ given in terms of the iterates of the Koopmann
operator $U_\phi$ of $T_\phi$ in the Hilbert space
$\H:=\ltwo(X\times G,\mu_X\otimes\mu_G)$ (Lemmas \ref{lemma_P_phi} and
\ref{lemma_dif_pi}). We show that the degree of $\phi$ transforms in a natural way
under Lie group homomorphisms and under the relation of $C^1$-cohomology (Lemma
\ref{lemma_inv_degrees}(a)-(b)) and we explain how it generalises previous definitions
of degree of a cocycle (Remark \ref{remark_degree}). In an analogous way, for each
finite-dimensional irreducible unitary representation $\pi$ of $G$, and $\g_\pi$ the
Lie algebra of $\pi(G)$, we define the degree of the cocycle $\pi\circ\phi:X\to\pi(G)$
as a function $P_{\pi\circ\phi}M_{\pi\circ\phi}:X\to\g_\pi$ given in terms of the
iterates of $U_\phi$ in the subspaces $\H^{(\pi)}_j\subset\H$ associated to $\pi$. We
show that the degree of $\pi\circ\phi$ is equal to the differential
$(\d\pi)_{e_G}\big((P_\phi M_\phi)(\;\!\cdot\;\!)\big)$ of $P_\phi M_\phi$ (Lemma
\ref{lemma_dif_pi}) and that it transforms in a natural way under Lie group
homomorphisms and under the relation of $C^1$-cohomology (Lemma
\ref{lemma_inv_degrees}(c)-(d)). Also, we present two particular cases where the
degree of $\phi$ takes a simple form. First, we show in Lemma \ref{lemma_diagonal}
that if $F_1$ is uniquely ergodic and the functions $\pi\circ\phi$ are diagonal (as in
the particular case \eqref{eq_skew_torus_1}-\eqref{eq_skew_torus_2}), then the degree
of $\phi$ reduces to the constant degree
\begin{equation}\label{form_int_1}
M_{\phi,\star}=\int_X\d\mu_X(x)\,(\L_Y\phi)(x)\phi(x)^{-1}\in\g,
\end{equation}
with $\L_Y$ the Lie derivative for the flow $\{F_t\}_{t\in\R}$. Second, we show in
Lemma \ref{lemma_ergodic} that if $T_\phi$ is uniquely ergodic, then the degree of
$\phi$ reduces to the constant degree
\begin{equation}\label{form_int_2}
M_{\phi,\star}
=\int_{X\times G}\d(\mu_X\otimes\mu_G)(x,g)\,
\Ad_g\big((\L_Y\phi)(x)\phi(x)^{-1}\big)\in\g,
\end{equation}
with $\Ad_g$ the inner automorphism of $\g$ induced by $g\in G$. The formulas
\eqref{form_int_1} and \eqref{form_int_2} generalise the formula
\eqref{eq_skew_torus_2} for the degree of $\phi$. As a by-product of formula
\eqref{form_int_2}, we obtain criteria for the non (unique) ergodicity of skew
products $T_\phi$ with $\phi\in C^1(X,G)$ (Theorem \ref{theorem_non_ergodic}), and we
show that if $G$ is connected and $T_\phi$ is uniquely ergodic, then the degree
belongs to the center $z(\g)$ of Lie algebra $\g$ (Remark \ref{remark_semisimple}(a)).
This implies in particular that there does not exist uniquely ergodic skew products
$T_\phi$ with $\phi\in C^1(X,G)$ and nonzero degree if $G$ is a connected semisimple
compact Lie group (Remark \ref{remark_semisimple}(b)).

Next, we present criteria for the mixing property of $U_\phi$ and the absolute
continuity of the spectrum of $U_\phi$ in the subspaces $\H^{(\pi)}_j$. Namely, we
show that $U_\phi$ is mixing in the orthocomplement of the kernel of
$P_{\pi\circ\phi}M_{\pi\circ\phi}$ in $\H^{(\pi)}_j$ (Theorem \ref{thm_mixing_D_pi}),
and under some additional assumptions we show that $U_\phi$ has purely absolutely
continuous spectrum in $\H^{(\pi)}_j$ if $(iP_{\pi\circ\phi}M_{\pi\circ\phi})^2$ is
strictly positive (Theorem \ref{thm_absolute_D_pi}). We also present a simplified
version of these results when $F_1$ uniquely ergodic and the functions $\pi\circ\phi$
are diagonal, and when $T_\phi$ is uniquely ergodic (Corollaries \ref{cor_mixing_D_pi}
and \ref{cor_absolute_D_pi}). Summing up these individual results in the subspaces
$\H^{(\pi)}_j$, one obtains a global result for the mixing property and the absolutely
continuous spectrum of $U_\phi$ in the whole Hilbert space $\H$.

As an application, we present in more detail four explicit cases: when $G=\T^{d'}$
($d'\in\N^*$), $G=\SU(2)$, $G=\SO(3,\R)$, and $G=\U(2)$. In the case $G=\T^{d'}$
(Section \ref{section_torus}), we prove the mixing property and the absolute
continuity of the spectrum of $U_\phi$ in the orthocomplement of functions depending
only on the first variable for cocycles with nonzero degree and suitable regularity
(see \eqref{eq_ac_torus_1} and \eqref{eq_ac_torus_2}). These results extend similar
results in the particular case $X\times G=\T^d\times\T^{d'}$, $d,d'\in\N^*$ 
\cite{Anz51,Cho87,Fra00,Iwa97_1,Iwa97_2,ILR93,Kus74,Tie15_2,Tie15_3}. In the case
$G=\SU(2)$ (Section \ref{section_SU(2)}), there is no uniquely ergodic skew product
$T_\phi$ with $\phi\in C^1\big(X,\SU(2)\big)$ and nonzero degree because $\SU(2)$ is a
connected semisimple compact Lie group. But, using a result of K. Fr{\c{a}}czek
\cite{Fra00_2}, we obtain that if $F_1$ is ergodic then any cocycle
$\phi\in C^1\big(X,\SU(2)\big)$ is cohomologous to a diagonal cocycle
$\delta:X\to\SU(2)$. Assuming $F_1$ uniquely ergodic and imposing suitable regularity
assumptions on $\phi$ and $\delta$, we then show the mixing property and the absolute
continuity of the spectrum of $U_\phi$ in appropriate subspaces of $\H$ (Theorems
\ref{thm_mixing_U_delta} and \ref{thm_absolute_U_delta}). These results extend similar
results of K. Fr{\c{a}}czek in the particular case $X\times G=\T\times\SU(2)$. In the
case $G=\SO(3,\R)$ (Section \ref{section_SO(3,R)}), there is no uniquely ergodic skew
product $T_\phi$ with $\phi\in C^1\big(X,\SO(3,\R)\big)$ and nonzero degree because
$\SO(3,\R)$ is a connected semisimple compact Lie group. But, we present the
representation theory needed to apply our results and explain how to show the mixing
property and the absolute continuity of the spectrum of $U_\phi$ in appropriate
subspaces of $\H$ under the assumption that $F_1$ is uniquely ergodic and the
functions $\pi\circ\phi$ diagonal. In the case $G=\U(2)$ (Section \ref{section_U(2)}),
we prove the mixing property and the absolute continuity of the spectrum of $U_\phi$
in appropriate subspaces of $\H$ for cocycles $\phi\in C^1\big(X,\U(2)\big)$ with
$T_\phi$ uniquely ergodic, nonzero degree, and suitable regularity (Theorems
\ref{thm_mixing_U(2)} and \ref{thm_ac_U(2)}). Also, using results of L. H. Eliasson
and X. Hou \cite{Eli02,Hou11}, we present an explicit example of skew-product $T_\phi$
satisfying the assumptions of Theorems \ref{thm_mixing_U(2)} and \ref{thm_ac_U(2)},
namely, a skew-product $T_\phi$ with $\phi\in C^\infty\big(\T^d;\U(2)\big)$
($d\in\N^*$), $T_\phi$ uniquely ergodic, and nonzero degree (Example
\ref{ex_skew_product_U(2)}). These results for $G=\U(2)$ are new.

To conclude, we give a brief description of the methods we use to prove our results.
In Section \ref{section_commutators}, we consider an abstract unitary operator $U$ in
a Hilbert space $\H$ and recall from \cite{RT15,Tie15_3} the following result: If
there exists an auxiliary self-adjoint operator $A$ in $\H$ such that $[A,U]$ is
bounded in some suitable sense and such that the strong limit
$$
D:=\slim_{N\to\infty}\frac1N\big[A,U^N\big]U^{-N}
$$
exists, then $U$ is mixing in $\ker(D)^\perp$. Building on this result, we determine
conditions under which the operator $U$ has in fact purely absolutely continuous
spectrum in the subspace $\ker(D)^\perp$. To do this, we introduce in Lemma
\ref{Lemma_A_B} and Proposition \ref{prop_conjugate} a new family of
(Ces\`aro-averaged) self-ajoint operators
$$
A_{D,N}:=\frac1N\sum_{n=0}^{N-1}U^n(AD+DA)U^{-n},\quad N\in\N^*.
$$
Then, we show that the operator $U$ satisfies suitable regularity conditions with
respect $A_{D,N}$ (Lemma \ref{U_C_1_A_DN}) and a positive commutator estimate with
$A_{D,N}$ (Proposition \ref{prop_Mourre}). Combining these results with known results
on commutator methods for unitary operators \cite{FRT13}, we obtain a general
criterion for the absolute continuity of the spectrum of $U$ in the subspace
$\ker(D)^\perp$ (Theorem \ref{thm_absolute}). Finally, we apply in Section
\ref{section_cocycles} these abstract results to the Koopmann operator $U_\phi$ in the
Hilbert spaces $\H^{(\pi)}_j$, with the auxiliary operator $A$ defined in terms of the
Lie derivative $\L_Y$ (Lemma \ref{lemma_A}). Doing so, we obtain from the abstract
theory our results on the mixing property and the absolutely continuous spectrum of
$U_\phi$ (Theorems \ref{thm_mixing_D_pi} and Theorem \ref{thm_absolute_D_pi}), and we
show that the degree $P_{\pi\circ\phi}M_{\pi\circ\phi}$ of $\pi\circ\phi$ is nothing
else but the value of the operator $D$ above in the particular case $U=U_\phi$ and $A$
given by $\L_Y$ (Lemma \ref{lemma_D_phi_pi}).\\

\noindent
{\bf Acknowledgements.} The author thanks E. Emsiz, K. Fr{\c{a}}czek, N. Karaliolios,
S. Richard and I. Zurri\'an for useful discussions.

\section{Commutator methods for unitary operators}\label{section_commutators}
\setcounter{equation}{0}

In this section, we present an abstract method for the construction of a conjugate
operator and the proof of a Mourre estimate for a general class of unitary operators.
To do so, we start by recalling some facts borrowed from
\cite{ABG96,FRT13,RT15,Tie15_3} on commutator methods for unitary operators and
regularity classes associated with them.

Let $\H$ be a Hilbert space with scalar product
$\langle\;\!\cdot\;\!,\;\!\cdot\;\!\rangle$ antilinear in the first argument, denote
by $\B(\H)$ the set of bounded linear operators on $\H$, and write $\|\cdot\|$ both
for the norm on $\H$ and the norm on $\B(\H)$. Let $A$ be a self-adjoint operator in
$\H$ with domain $\dom(A)$, and let $S\in\B(\H)$.
\begin{enumerate}
\item[(i)] We say that $S$ belongs to $C^{+0}(A)$, with notation $S\in C^{+0}(A)$, if
$S$ satisfies the Dini-type condition
$$
\int_0^1\frac{\d t}t\,\big\|\e^{-itA}S\e^{itA}-S\big\|<\infty.
$$
\item[(ii)] For any $k\in\N$, we say that $S$ belongs to $C^k(A)$, with notation
$S\in C^k(A)$, if the map
\begin{equation}\label{eq_group}
\R\ni t\mapsto\e^{-itA}S\e^{itA}\in\B(\H)
\end{equation}
is strongly of class $C^k$. In the case $k=1$, one has $S\in C^1(A)$ if and only if
the quadratic form
$$
\dom(A)\ni\varphi\mapsto\big\langle\varphi,SA\;\!\varphi\big\rangle
-\big\langle A\;\!\varphi,S\varphi\big\rangle\in\C
$$
is continuous for the topology induced by $\H$ on $\dom(A)$. We denote by $[S,A]$ the
bounded operator associated with the continuous extension of this form, or
equivalently $-i$ times the strong derivative of the function \eqref{eq_group} at
$t=0$.
\item[(iii)] We say that $S$ belongs to $C^{1+0}(A)$, with notation $S\in C^{1+0}(A)$,
if $S\in C^1(A)$ and $[A,S]\in C^{+0}(A)$.
\end{enumerate}

As banachisable topological vector spaces, the sets $C^2(A)$, $C^{1+0}(A)$, $C^1(A)$,
$C^{+0}(A)$ and $C^0(A)=\B(\H)$ satisfy the continuous inclusions
\cite[Sec.~5.2.4]{ABG96}
$$
C^2(A)\subset C^{1+0}(A)\subset C^1(A)\subset C^{+0}(A)\subset C^0(A).
$$

Let $U\in C^1(A)$ be a unitary operator with (complex) spectral measure
$E^U(\;\!\cdot\;\!)$ and spectrum $\sigma(U)\subset\S^1:=\{z\in\C\mid|z|=1\}$. If
there exist a Borel set $\Theta\subset\S^1$, a number $a>0$, and a compact operator
$K\in\B(\H)$ such that
\begin{equation}\label{eq_Mourre}
E^U(\Theta)U^{-1}[A,U]E^U(\Theta)\ge aE^U(\Theta)+K,
\end{equation}
one says that $U$ satisfies a Mourre estimate on $\Theta$ and that $A$ is a conjugate
operator for $U$ on $\Theta$. Also, one says that $U$ satisfies a strict Mourre
estimate on $\Theta$ if \eqref{eq_Mourre} holds with $K=0$. One of the consequences of
a Mourre estimate is to imply spectral properties for $U$ on $\Theta$. We recall these
spectral properties in the case $U\in C^{1+0}(A)$ (see
\cite[Thm.~2.7 \& Rem.~2.8]{FRT13}):

\begin{Theorem}[Spectral properties of $U$]\label{thm_spectral}
Let $U$ be a unitary operator in $\H$ and let $A$ be a self-adjoint operator in $\H$
with $U\in C^{1+0}(A)$. Suppose there exist an open set $\Theta\subset\S^1$, a number
$a>0$, and a compact operator $K$ in $\H$ such that
\begin{equation}\label{eq_Mourre_bis}
E^U(\Theta)U^{-1}[A,U]E^U(\Theta)\ge aE^U(\Theta)+K.
\end{equation}
Then, $U$ has at most finitely many eigenvalues in $\Theta$, each one of finite 
multiplicity, and $U$ has no singular continuous spectrum in $\Theta$. Furthermore, if
\eqref{eq_Mourre_bis} holds with $K=0$, then $U$ has no singular spectrum in $\Theta$.
\end{Theorem}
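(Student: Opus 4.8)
The statement collects the standard spectral consequences of a Mourre estimate, in the version for unitary operators, and the most direct route is to observe that the hypotheses here are exactly those of the abstract theory developed in \cite[Sec.~2]{FRT13} and to quote \cite[Thm.~2.7 \& Rem.~2.8]{FRT13}. Below I outline the mechanism, which follows the usual two-step pattern adapted from the self-adjoint line to the circle $\S^1$.

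The assertions about eigenvalues rest on a virial argument. If $U\varphi=\e^{i\theta}\varphi$ with $\e^{i\theta}\in\Theta$, then a standard computation (using $U\varphi=\e^{i\theta}\varphi$, $U^{-1}\varphi=\e^{-i\theta}\varphi$, a bounded regularisation $A_\varepsilon:=A(1+i\varepsilon A)^{-1}$ of $A$ for which $\langle\varphi,[A_\varepsilon,U]\varphi\rangle=\e^{i\theta}\langle\varphi,A_\varepsilon\varphi\rangle-\e^{i\theta}\langle\varphi,A_\varepsilon\varphi\rangle=0$, and the fact that $[A_\varepsilon,U]\to[A,U]$ strongly as $\varepsilon\to0$ because $U\in C^1(A)$) yields $\langle\varphi,U^{-1}[A,U]\varphi\rangle=0$. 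Evaluating \eqref{eq_Mourre_bis} on $\varphi=E^U(\Theta)\varphi$ then gives $0\ge a\|\varphi\|^2+\langle\varphi,K\varphi\rangle$. If $K=0$ this forces $\varphi=0$, so $U$ has no eigenvalue in $\Theta$; if $K$ is merely compact, applying this to an orthonormal sequence of eigenvectors with eigenvalues in $\Theta$ (which tends weakly to $0$, so that $\langle\varphi_n,K\varphi_n\rangle\to0$) gives $0\ge a+o(1)$, a contradiction, whence at most finitely many eigenvalues in $\Theta$, each of finite multiplicity.

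The absence of singular continuous spectrum is obtained from a limiting absorption principle on $\Theta$ away from the (finitely many) eigenvalues: existence of the boundary values of the sandwiched resolvent $\langle A\rangle^{-s}(U-r\;\!\e^{i\theta})^{-1}\langle A\rangle^{-s}$, with $\langle A\rangle:=(1+A^2)^{1/2}$ and $s>1/2$, as $r\to1$, locally uniformly in $\theta$. The plan would be to fix $\e^{i\theta_0}\in\Theta$, localise \eqref{eq_Mourre_bis} to a small arc around it on which a strict lower bound holds up to an arbitrarily small error (shrinking the arc and absorbing the compact $K$), introduce a regularised conjugate operator, sandwich the resolvent with it, differentiate in the spectral variable, and close a first-order differential inequality for the sandwiched resolvent using the Mourre estimate together with $U\in C^{1+0}(A)$. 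The regularity class enters precisely here: the Dini condition $[A,U]\in C^{+0}(A)$ is just enough to control the residual iterated-commutator term and run the inequality without assuming $U\in C^2(A)$. Existence of the boundary values then excludes singular continuous spectrum in $\Theta$ by the standard criterion for the spectral measure; combined with the first step, when $K=0$ there is no singular spectrum in $\Theta$ at all.

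The main obstacle is this last differential inequality, and it is also the point where the hypothesis $U\in C^{1+0}(A)$ (rather than the more comfortable $U\in C^2(A)$) is used: for a unitary operator the inequality must be set up around a point of $\S^1$ rather than on an interval of $\R$, which forces a careful choice of the regularised conjugate operator and a delicate bookkeeping of the error terms generated by the commutators. This is exactly the technical content of \cite{FRT13}; by contrast the virial part of the argument is routine.
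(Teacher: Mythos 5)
Your proposal is correct and takes essentially the same approach as the paper: Theorem~\ref{thm_spectral} is stated there as a recalled result, with no proof offered beyond the citation of \cite[Thm.~2.7 \& Rem.~2.8]{FRT13}, which is exactly what you do. Your added sketch of the virial argument for the eigenvalue part and of the limiting absorption principle (with the role of the $C^{1+0}(A)$ regularity in closing the differential inequality) is a sound summary of the mechanism behind that reference, but it is supplementary rather than a departure from the paper's route.
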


We also recall a result on the mixing property of $U$ in the case $U\in C^1(A)$ (see
\cite[Cor.~2.5 \& Rem.~2.6]{RT15} and \cite[Thm.~3.2]{Tie15_3}):

\begin{Theorem}[Mixing property of $U$]\label{thm_mixing}
Let $U$ be a unitary operator in $\H$ and let $A$ be a self-adjoint operator in $\H$
with $U\in C^1(A)$. Assume that the strong limit
$$
D:=\slim_{N\to\infty}\frac1N\big[A,U^N\big]U^{-N}
$$
exists. Then,
\begin{enumerate}
\item[(a)] $\lim_{N\to\infty}\big\langle\varphi,U^N\psi\big\rangle=0$ for each
$\varphi\in\ker(D)^\perp$ and $\psi\in\H$,
\item[(b)] $U|_{\ker(D)^\perp}$ has purely continuous spectrum.
\end{enumerate}
\end{Theorem}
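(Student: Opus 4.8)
The plan is to deduce everything from the single structural fact that $D$ is a bounded self\-adjoint operator commuting with $U$, and then to run a weak\-compactness argument on the orbit $\{U^N\varphi\}_N$. First I would record that, with $B:=[A,U]U^{-1}$, the operator $B$ is the bounded extension of the symmetric sesquilinear form $\varphi\mapsto\langle\varphi,A\varphi\rangle-\langle\varphi,UAU^{-1}\varphi\rangle$ on $\dom(A)$ (using that $U\dom(A)=\dom(A)$), so $B=B^*\in\B(\H)$; and that the Leibniz rule for $[A,\,\cdot\,]$ gives, for every $N\in\N$,
$$
T_N:=\tfrac1N[A,U^N]U^{-N}=\tfrac1N\sum_{n=0}^{N-1}U^nBU^{-n}.
$$
Hence each $T_N$ is self\-adjoint with $\|T_N\|\le\|B\|$, so $D=\slim_N T_N$ is self\-adjoint and bounded with $\|D\|\le\|B\|$; and since $\|UT_NU^{-1}-T_N\|\le\tfrac2N\|B\|\to0$, the operator $D$ commutes with $U$. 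In particular the spectral projection $E:=E^D(\{0\})$ onto $\ker(D)$ commutes with $U$, so both $\ker(D)$ and $\ker(D)^\perp$ reduce $U$ and the statement of (b) is meaningful.

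The heart of the matter is the following claim: \emph{for every $\varphi\in\dom(A)$, each weak limit point $\chi$ of the bounded sequence $\{U^N\varphi\}_N$ lies in $\ker(D)$.} To prove it I would fix a subsequence with $U^{N_k}\varphi\rightharpoonup\chi$ and compute the weak limit of $\tfrac1{N_k}[A,U^{N_k}]\varphi=T_{N_k}\big(U^{N_k}\varphi\big)$ in two ways. On the one hand, since the $T_{N_k}$ are self\-adjoint, uniformly bounded, and converge strongly to $D$, while $U^{N_k}\varphi\rightharpoonup\chi$, one gets $T_{N_k}\big(U^{N_k}\varphi\big)\rightharpoonup D\chi$ (test against $\psi\in\H$, move $T_{N_k}$ to the left, and split off $(T_{N_k}-D)\psi\to0$). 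On the other hand, for $\psi\in\dom(A)$ the $C^1(A)$ identity gives $\langle\psi,\tfrac1N[A,U^N]\varphi\rangle=\tfrac1N\big(\langle A\psi,U^N\varphi\rangle-\langle U^{-N}\psi,A\varphi\rangle\big)$, whose modulus is $\le\tfrac1N\big(\|A\psi\|\,\|\varphi\|+\|\psi\|\,\|A\varphi\|\big)\to0$; since $\dom(A)$ is dense and $\|\tfrac1N[A,U^N]\varphi\|\le\|B\|\,\|\varphi\|$, this forces $T_{N_k}\big(U^{N_k}\varphi\big)\rightharpoonup0$. Comparing the two, $D\chi=0$, which is the claim.

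From the claim, part (a) follows quickly. Given $\varphi\in\dom(A)$, put $\varphi_1:=(1-E)\varphi\in\ker(D)^\perp$; since $E$ commutes with $U$ we have $U^N\varphi_1=(1-E)U^N\varphi$, and by the claim every weak limit point of $\{U^N\varphi\}_N$ lies in $\mathrm{ran}(E)=\ker(D)$, so every weak limit point of $\{U^N\varphi_1\}_N$ is $0$; being bounded, $U^N\varphi_1\rightharpoonup0$. As $(1-E)\dom(A)$ is dense in $(1-E)\H=\ker(D)^\perp$ and $\|U^N\|=1$, a standard density argument upgrades this to $U^Nv\rightharpoonup0$ for every $v\in\ker(D)^\perp$. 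Now for $\varphi\in\ker(D)^\perp$ and $\psi\in\H$ I would split $\psi=E\psi+(1-E)\psi$: the first part contributes $\langle\varphi,U^NE\psi\rangle=\langle\varphi,EU^N\psi\rangle=0$ since $\varphi\perp\ker(D)$, and $\langle\varphi,U^N(1-E)\psi\rangle\to0$ by the previous sentence. This is (a); and (b) is then immediate, for an eigenvector $\varphi\in\ker(D)^\perp$ with $U\varphi=\lambda\varphi$ and $\|\varphi\|=1$ would give $\langle\varphi,U^N\varphi\rangle=\lambda^N\not\to0$, contradicting (a). The one step that really needs care is the first weak\-limit computation above: it hinges on the (not entirely obvious) observation that $T_N$ is self\-adjoint, being a Ces\`aro average of unitary conjugates of $B=B^*$. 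This is exactly what makes $D$ self\-adjoint, makes it commute with $U$, and licenses passing to the weak limit inside $T_{N_k}\big(U^{N_k}\varphi\big)$; the rest is density and bookkeeping.
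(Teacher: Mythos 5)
Your proposal is correct. The structural backbone is exactly the right one: $D_N:=\tfrac1N[A,U^N]U^{-N}=\tfrac1N\sum_{n<N}U^nBU^{-n}$ with $B=[A,U]U^{-1}=B^*$, hence each $D_N$ is self-adjoint, $\|D_N\|\le\|B\|$, $D$ is bounded self-adjoint, and $[D,U]=0$ by telescoping; and the key analytic input is that $\langle\psi,\tfrac1N[A,U^N]\varphi\rangle=\tfrac1N\big(\langle A\psi,U^N\varphi\rangle-\langle U^{-N}\psi,A\varphi\rangle\big)\to0$ for $\psi,\varphi\in\dom(A)$, combined with the strong convergence $D_N\to D$.

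Where you differ from the route in the references is in the packaging. You run a weak-compactness argument: extract weak limit points $\chi$ of $\{U^{N}\varphi\}$, identify $D_{N_k}U^{N_k}\varphi\rightharpoonup D\chi$ using self-adjointness of $D_{N_k}$, show the same sequence tends weakly to $0$, conclude $\chi\in\ker(D)$, and then use the reducing projection $E=E^D(\{0\})$ and a density step. The cited proof avoids subsequences entirely and is more direct: for $\eta,\psi\in\dom(A)$ one writes
$$
\langle D\eta,U^N\psi\rangle=\langle D_N\eta,U^N\psi\rangle+\langle(D-D_N)\eta,U^N\psi\rangle,
$$
kills the second term by $D_N\to D$ strongly, and identifies the first term as $\tfrac1N\langle\eta,[A,U^N]\psi\rangle\to0$ by the $C^1(A)$ identity and self-adjointness of $D_N$; then density of $D\dom(A)$ in $\overline{\mathrm{ran}(D)}=\ker(D)^\perp$ and of $\dom(A)$ in $\H$ gives (a), with (b) following by the same eigenvector contradiction you use. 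Both arguments are built on the identical set of observations, so the content is the same; your version spends an extra step on weak sequential compactness that the direct estimate renders unnecessary, but there is nothing wrong with it, and the insistence on the self-adjointness of the $D_N$ (which you correctly flag as the load-bearing fact) is exactly right.
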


If $U\in C^1(A)$, then $U^N\in C^1(A)$ for each $N\in\Z$ \cite[Prop.~5.1.5]{ABG96}.
Thus, all the operators $\frac1N\big[A,U^N\big]U^{-N}$ are bounded and self-adjoint,
and so is their strong limit $D$. Also, one has $[D,U^n]=0$ for each $n\in\Z$. Thus,
$\ker(D)^\perp$ is a reducing subspace for $U$, and the restriction
$U|_{\ker(D)^\perp}$ is a well-defined unitary operator \cite[Ex.~5.39(b)]{Wei80}.
Furthermore, if $U$ and $\widetilde U$ are unitarily equivalent unitary operators,
then their strong limits $D$ and $\widetilde D$ are unitarily equivalent. More
precisely:

\begin{Lemma}[Invariance of $D$]\label{lemma_inv_D}
Let $U$ be a unitary operator in $\H$ and let $A$ be a self-adjoint operator in $\H$
with $U\in C^1(A)$. Assume that the strong limit
$D=\slim_{N\to\infty}\frac1N\big[A,U^N\big]U^{-N}$ exists. Then, if
$\widetilde U:=VUV^{-1}$ with $V$ a unitary operator in $\H$ such that $V\in C^1(A)$,
the strong limit
$$
\widetilde D:=\slim_{N\to\infty}\frac1N\big[A,\widetilde U^N\big]\widetilde U^{-N}
$$
also exists, and satisfies $\widetilde D=VDV^{-1}$.
\end{Lemma}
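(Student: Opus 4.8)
The plan is to reduce the statement to the Leibniz rule for commutators in the class $C^1(A)$ together with the hypothesis that $\tfrac1N[A,U^N]U^{-N}$ converges strongly to $D$.

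First I would observe that $\widetilde U\in C^1(A)$. Indeed, $V\in C^1(A)$ implies $V^{-1}\in C^1(A)$ (take $N=-1$ in \cite[Prop.~5.1.5]{ABG96}), and $C^1(A)$ is an algebra, so $\widetilde U=VUV^{-1}$ and all its integer powers $\widetilde U^N=VU^NV^{-1}$ belong to $C^1(A)$; in particular the operators $\tfrac1N\big[A,\widetilde U^N\big]\widetilde U^{-N}$ are well defined and bounded. Then, applying the Leibniz rule $[A,STR]=[A,S]\,TR+S\,[A,T]\,R+ST\,[A,R]$ (valid for $S,T,R\in C^1(A)$) with $S=V$, $T=U^N$, $R=V^{-1}$, and multiplying on the right by $\widetilde U^{-N}=VU^{-N}V^{-1}$, the cancellations $V^{-1}V=1$ and $U^NU^{-N}=1$ give
$$
\frac1N\big[A,\widetilde U^N\big]\widetilde U^{-N}
=\frac1N\,[A,V]V^{-1}
+V\left(\frac1N\big[A,U^N\big]U^{-N}\right)V^{-1}
+\frac1N\,VU^N[A,V^{-1}]VU^{-N}V^{-1}.
$$

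It then remains to let $N\to\infty$ term by term. The first summand tends to $0$ in norm because $[A,V]V^{-1}$ is bounded. The third summand tends to $0$ in norm as well, since $\big\|\tfrac1NVU^N[A,V^{-1}]VU^{-N}V^{-1}\big\|\le\tfrac1N\|[A,V^{-1}]\|$ by unitarity of $V$ and $U^{\pm N}$. Finally, the middle summand converges strongly to $VDV^{-1}$: by hypothesis $\tfrac1N\big[A,U^N\big]U^{-N}\to D$ strongly, and left and right multiplication by the fixed bounded operators $V$ and $V^{-1}$ preserve strong convergence. Hence the strong limit $\widetilde D$ exists and equals $VDV^{-1}$, as claimed.

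The argument is entirely routine; the only points needing a little care are verifying that $\widetilde U$ and its negative powers lie in $C^1(A)$, so that the Leibniz rule may be invoked, and noticing that only the middle term of the decomposition is handled in the strong (rather than norm) topology — which is harmless, since sandwiching a strongly convergent net between two fixed bounded operators again yields a strongly convergent net.
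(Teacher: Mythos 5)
Your proof is correct and follows essentially the same route as the paper: apply the Leibniz rule to $[A,VU^NV^{-1}]$, multiply by $\widetilde U^{-N}=VU^{-N}V^{-1}$, and observe that the two boundary terms vanish (in norm) while the middle term converges strongly to $VDV^{-1}$. You spell out the norm estimate for the third term and the preservation of strong convergence under sandwiching a bit more explicitly than the paper does, but the decomposition and the limiting argument are the same.
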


\begin{proof}
Since $U$ and $V$ belong to $C^1(A)$, the operator $V^{-1}$ and the product
$\widetilde U=VUV^{-1}$ also belong to $C^1(A)$ (see \cite[Prop.~5.1.5 \&
5.1.6(a)]{ABG96}). Thus, the commutators $[A,VU^NV^{-1}]$, $[A,V^{-1}]$, $[A,U^N]$ and
$[A,V]$ are well-defined, and
\begin{align*}
\widetilde D
&=\slim_{N\to\infty}\frac1N\big[A,\widetilde U^N\big]\widetilde U^{-N}\\
&=\slim_{N\to\infty}\frac1N\big[A,VU^NV^{-1}\big]VU^{-N}V^{-1}\\
&=\slim_{N\to\infty}\frac1N\big(VU^N\big[A,V^{-1}\big]+V\big[A,U^N\big]V^{-1}
+[A,V]U^NV^{-1}\big)VU^{-N}V^{-1}\\
&=0+V\left(\slim_{N\to\infty}\frac1N\big[A,U^N\big]U^{-N}\right)V^{-1}+0\\
&=VDV^{-1}.
\end{align*}
\end{proof}

In the rest of the section, we build on Theorem \ref{thm_mixing} and determine
conditions under which the unitary operator $U$ of Theorem \ref{thm_mixing} has purely
absolutely continuous spectrum in $\ker(D)^\perp$. To do so, we construct a conjugate
operator for $U$ using the operators $A$ and $D$ of Theorem \ref{thm_mixing}. We start
with a simple, but useful, lemma on self-adjoint operators:

\begin{Lemma}\label{Lemma_A_B}
Let $A$ be a self-adjoint operator in $\H$ and let $B=B^*\in\B(\H)$ with
$B\in C^1(A)$.
\begin{enumerate}
\item[(a)] The operator
$$
\mathscr A_B\varphi:=(AB+BA)\varphi,\quad\varphi\in\dom(A),
$$
is essentially self-adjoint in $\H$, and its closure $A_B:=\overline{\mathscr A_B}$
has domain
$$
\dom(A_B)=\big\{\varphi\in\H\mid B\varphi\in\dom(A)\big\}.
$$
\item[(b)] If $C\in\B(\H)$ satisfies $C\in C^1(A)$ and $[B,C]=0$, then $C\in C^1(A_B)$
with $[C,A_B]=[C,A]B+B[C,A]$.
\end{enumerate}
\end{Lemma}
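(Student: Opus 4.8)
The plan is to prove (a) by identifying $\mathscr A_B^*$ explicitly and then showing $\dom(A)$ is a core for it, and to prove (b) by a direct form computation on that core. For (a), I would first record the elementary consequences of $B=B^*\in\B(\H)\cap C^1(A)$: $B$ maps $\dom(A)$ into itself, $[A,B]\in\B(\H)$ acts as $AB-BA$ on $\dom(A)$, and $[A,B]^*=-[A,B]$. Hence $(AB+BA)\varphi$ is well defined for $\varphi\in\dom(A)$, and moving $A$ and $B$ across the scalar product (using $A^*=A$, $B^*=B$, and $B\dom(A)\subset\dom(A)$) shows $\mathscr A_B$ is symmetric, hence closable. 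For the adjoint, the identity
$$
\big\langle(AB+BA)\varphi,\psi\big\rangle=2\langle A\varphi,B\psi\rangle-\langle\varphi,[A,B]\psi\rangle,\qquad\varphi\in\dom(A),\ \psi\in\H,
$$
(obtained by inserting $AB=BA+[A,B]$ on $\dom(A)$ and using $[A,B]^*=-[A,B]$) shows that $\psi\in\dom(\mathscr A_B^*)$ iff the map $\varphi\mapsto\langle A\varphi,B\psi\rangle$ is $\H$-continuous on $\dom(A)$, i.e.\ iff $B\psi\in\dom(A^*)=\dom(A)$, and then $\mathscr A_B^*\psi=2AB\psi-[A,B]\psi$. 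This already gives $\dom(\mathscr A_B^*)=\{\psi\in\H\mid B\psi\in\dom(A)\}$, the asserted domain of $A_B$.

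Essential self-adjointness of $\mathscr A_B$ is equivalent to $\overline{\mathscr A_B}=\mathscr A_B^*$, and since $\overline{\mathscr A_B}\subset\mathscr A_B^*$ holds automatically, it remains to check that $\dom(A)$ is a core for $\mathscr A_B^*$. This is the delicate point, because $\dom(\mathscr A_B^*)$ is in general strictly larger than $\dom(A)$, so $AB+BA$ may not be manipulated formally there. I would regularise: for $\psi$ with $B\psi\in\dom(A)$ put $\psi_n:=n(n+iA)^{-1}\psi\in\dom(A)$; then $\psi_n\to\psi$, and decomposing $B\psi_n=n(n+iA)^{-1}B\psi+[B,n(n+iA)^{-1}]\psi$, using the resolvent identity $[B,(n+iA)^{-1}]=i(n+iA)^{-1}[A,B](n+iA)^{-1}$ and the strong limits $n(n+iA)^{-1}\to1$, $A(n+iA)^{-1}\to0$, one gets $AB\psi_n\to AB\psi$, hence $\mathscr A_B\psi_n=2AB\psi_n-[A,B]\psi_n\to\mathscr A_B^*\psi$. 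Therefore $\mathscr A_B^*\subset\overline{\mathscr A_B}$, so $\overline{\mathscr A_B}=\mathscr A_B^*=A_B$, which is self-adjoint; this proves (a).

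For (b), since $\dom(A)$ is a core for $A_B$ and $A_B$ acts there as $AB+BA$, it suffices to compute the defining form of $C^1(A_B)$ on $\dom(A)$. Expanding $\langle\varphi,CA_B\varphi\rangle-\langle A_B\varphi,C\varphi\rangle$ into four terms and using $CB=BC$ and $B=B^*$ to regroup them as $\big(\langle\varphi,CA(B\varphi)\rangle-\langle A\varphi,C(B\varphi)\rangle\big)+\big(\langle B\varphi,CA\varphi\rangle-\langle A(B\varphi),C\varphi\rangle\big)$, and then applying to each group the sesquilinear form expressing $C\in C^1(A)$, namely $\langle\chi,CA\eta\rangle-\langle A\chi,C\eta\rangle=\langle\chi,[C,A]\eta\rangle$ for $\chi,\eta\in\dom(A)$, one obtains
$$
\big\langle\varphi,CA_B\varphi\big\rangle-\big\langle A_B\varphi,C\varphi\big\rangle=\big\langle\varphi,\big([C,A]B+B[C,A]\big)\varphi\big\rangle,\qquad\varphi\in\dom(A).
$$
The right member is a bounded form, and since $\dom(A)$ is a core for $A_B$ and $C\in\B(\H)$, it extends by approximation to all $\varphi\in\dom(A_B)$; hence $C\in C^1(A_B)$ with $[C,A_B]=[C,A]B+B[C,A]$.

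The step I expect to be the main obstacle is the essential self-adjointness in (a): its real content is that $\dom(A_B)$ genuinely exceeds $\dom(A)$, so $\overline{\mathscr A_B}$ cannot be read off by formal algebra on its domain, and the regularisation argument showing $\dom(A)$ is a core for $\mathscr A_B^*$ is what carries the proof; everything else is bookkeeping with bounded commutators.
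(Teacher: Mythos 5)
Your proposal is correct, and in part (b) it coincides with the paper's argument almost verbatim. In part (a) the overall structure is the same — identify $\mathscr A_B^*$ as $2AB-[A,B]$ on $\{\psi\mid B\psi\in\dom(A)\}$, then show $\dom(A)$ is a core for $\mathscr A_B^*$ — but you diverge on the two supporting steps. Where the paper cites \cite[p.~297--298]{ABG96} for the fact that $\dom(A)$ is a core for $AB$ and then runs a somewhat roundabout double-limit computation to conclude that $(\mathscr A_B)^*$ is symmetric (hence $\mathscr A_B$ is essentially self-adjoint), you instead prove the core property directly by the resolvent regularisation $\psi_n=n(n+iA)^{-1}\psi$, using $[B,(n+iA)^{-1}]=i(n+iA)^{-1}[A,B](n+iA)^{-1}$ and the strong limits $n(n+iA)^{-1}\to1$, $A(n+iA)^{-1}\to0$ to get $AB\psi_n\to AB\psi$, and then conclude immediately from $\mathscr A_B^*\subset\overline{\mathscr A_B}$ combined with the trivial reverse inclusion. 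Your route is more self-contained (it does not outsource the core statement to \cite{ABG96}) and slightly more economical in the final logical step, since once $\dom(A)$ is known to be a core for $\mathscr A_B^*$ the inclusion $\mathscr A_B^*\subset\overline{\mathscr A_B}$ follows at once and the paper's separate verification of the symmetry of $(\mathscr A_B)^*$ is redundant; what the paper's presentation buys instead is brevity by citation. Both arguments are correct.
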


\begin{proof}
(a) The proof is inspired from that of \cite[Lemma~7.2.15]{ABG96}. Since
$B\in C^1(A)$, one has $B\;\!\dom(A)\subset\dom(A)$, and thus $\mathscr A_B$ is a
well-defined symmetric operator on $\dom(A)$. Furthermore, one has for
$\varphi\in\dom(A)$
$$
\mathscr A_B\varphi=(AB+BA)\varphi=\big([A,B]+2BA\big)\varphi.
$$
Therefore, the identity $[A,B]^*=[B,A]$ \cite[Lemma~7.2.15]{ABG96} and the usual
properties of the adjoint \cite[Thm.~4.19(b) \& 4.20(c)]{Wei80} imply
$$
(\mathscr A_B)^*=\big([A,B]+2BA\big)^*=[B,A]+2AB
\quad\hbox{with}\quad
\dom\big((\mathscr A_B)^*\big)
=\dom(AB)
=\big\{\varphi\in\H\mid B\varphi\in\dom(A)\big\}.
$$
Since $\dom(A)$ is a core for $AB$ \cite[p.~297-298]{ABG96}, it follows that $\dom(A)$
is a also core for $(\mathscr A_B)^*$. So, for each
$\varphi,\psi\in\dom\big((\mathscr A_B)^*\big)$ there exist
$\{\varphi_n\},\{\psi_n\}\subset\dom(A)$ such that
$$
\lim_n\|\varphi-\varphi_n\|_{\dom((\mathscr A_B)^*)}
=\lim_n\|\psi-\psi_n\|_{\dom((\mathscr A_B)^*)}
=0,
$$
and thus
\begin{align*}
\big\langle\varphi,(\mathscr A_B)^*\psi\big\rangle
&=\lim_n\lim_m\big\langle\varphi_n,\big([B,A]+2AB\big)\psi_m\big\rangle\\
&=\lim_n\lim_m\big\langle\big([B,A]^*+2BA\big)\varphi_n,\psi_m\big\rangle\\
&=\lim_n\lim_m\big\langle\big([A,B]+2BA\big)\varphi_n,\psi_m\big\rangle\\
&=\lim_n\lim_m\big\langle\big([B,A]+2AB\big)\varphi_n,\psi_m\big\rangle\\
&=\big\langle(\mathscr A_B)^*\varphi,\psi\big\rangle.
\end{align*}
Therefore, $(\mathscr A_B)^*$ is symmetric, and thus $\mathscr A_B$ is essentially
self-adjoint with $\overline{\mathscr A_B}=(\mathscr A_B)^*$ and
$$
\dom(\overline{\mathscr A_B})
=\dom\big((\mathscr A_B)^*\big)
=\dom(AB)
=\big\{\varphi\in\H\mid B\varphi\in\dom(A)\big\}.
$$

(b) Let $\varphi\in\dom(A)$. Then, the inclusions $B,C\in C^1(A)$ and the relation
$[B,C]=0$ imply that
\begin{align*}
\big\langle\varphi,CA_B\varphi\big\rangle
-\big\langle A_B\varphi,C\varphi\big\rangle
&=\big\langle\varphi,C(AB+BA)\varphi\big\rangle
-\big\langle\varphi,(AB+BA)C\varphi\big\rangle\\
&=\big\langle\varphi,\big([C,A]B+B[C,A]\big)\varphi\big\rangle.
\end{align*}
Since $\dom(A)$ is a core for $A_B$, it follows that $C\in C^1(A_B)$ with
$[C,A_B]=[C,A]B+B[C,A]$.
\end{proof}

Using Lemma \ref{Lemma_A_B}, we can define and prove the self-adjointness of a
suitable conjugate operator for $U$. This conjugate operator is an amelioration of the
conjugate operator introduced in \cite[Sec.~4]{FRT13} taking into account the
existence of the limit $D$.

\begin{Proposition}[Conjugate operator for $U$]\label{prop_conjugate}
Let $U$ be a unitary operator in $\H$ and let $A$ be a self-adjoint operator in $\H$
with $U\in C^1(A)$. Assume that the strong limit
$D=\slim_{N\to\infty}\frac1N\big[A,U^N\big]U^{-N}$ exists and satisfies $D\in C^1(A)$.
Then, the operator
$$
A_{D,N}\;\!\varphi:=\frac1N\sum_{n=0}^{N-1}U^nA_DU^{-n}\varphi,
\quad N\in\N^*,~\varphi\in\dom(A_{D,N}):=\dom(A_D),
$$
is self-adjoint in $\H$, and $\dom(A)$ is a core for $A_{D,N}$.
\end{Proposition}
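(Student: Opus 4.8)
The plan is to deduce self-adjointness of each $A_{D,N}$ from Lemma \ref{Lemma_A_B} applied with the bounded self-adjoint operator $B=D$, together with a conjugation argument. First I would invoke Lemma \ref{Lemma_A_B}(a) with $B=D$ (legitimate since $D=D^*\in\B(\H)$ and $D\in C^1(A)$ by hypothesis): this gives that $\mathscr A_D\varphi:=(AD+DA)\varphi$ on $\dom(A)$ is essentially self-adjoint, with closure $A_D$ having domain $\dom(A_D)=\{\varphi\in\H\mid D\varphi\in\dom(A)\}$, and $\dom(A)$ a core for $A_D$. Thus each summand $U^nA_DU^{-n}$ in the definition of $A_{D,N}$ is, a priori, a conjugate of a self-adjoint operator by the unitary $U^n$; I would need to check this conjugate is itself self-adjoint on the stated domain and that the finite (Ces\`aro) sum remains self-adjoint on $\dom(A_D)$.

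The key step is to show that $U^nA_DU^{-n}$ is self-adjoint with domain $\dom(A_D)$ for every $n\in\Z$, i.e.\ that $U^n$ preserves $\dom(A_D)$. Here the crucial input is the commutation relation $[D,U^n]=0$ for all $n\in\Z$, recalled in the text just after Theorem \ref{thm_mixing}. I would argue: since $U\in C^1(A)$, also $U^n\in C^1(A)$ for each $n\in\Z$ (\cite[Prop.~5.1.5]{ABG96}), so $U^n\;\!\dom(A)\subset\dom(A)$; combined with $DU^n=U^nD$ this shows that $\varphi\in\dom(A_D)$, i.e.\ $D\varphi\in\dom(A)$, implies $D(U^{-n}\varphi)=U^{-n}D\varphi\in\dom(A)$, hence $U^{-n}\varphi\in\dom(A_D)$, and symmetrically with $n\mapsto -n$; so $U^n\;\!\dom(A_D)=\dom(A_D)$. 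Conjugation of a self-adjoint operator by a unitary that preserves its domain yields a self-adjoint operator with the same (transported, here invariant) domain, so each $U^nA_DU^{-n}$ is self-adjoint on $\dom(A_D)$. A finite sum of self-adjoint operators sharing a common domain on which the sum is symmetric need not be self-adjoint in general; but here I would get self-adjointness of $A_{D,N}$ directly by noting that, on $\dom(A)$, which is a core for each $U^nA_DU^{-n}$ (being a core for $A_D$ and invariant under $U^n$), one has
$$
A_{D,N}\varphi=\frac1N\sum_{n=0}^{N-1}U^n(AD+DA)U^{-n}\varphi
=\Big(A_{D,N}^{(1)}+A_{D,N}^{(2)}\Big)\varphi,
$$
and then mimic the proof of Lemma \ref{Lemma_A_B}(a): compute $(\mathscr A_{D,N})^*$ on $\dom(AD)=\dom(A_D)$, use $[A,U^n]^*$-type identities and the density of $\dom(A)$ to show $(\mathscr A_{D,N})^*$ is symmetric, hence $\mathscr A_{D,N}$ (the restriction to $\dom(A)$) is essentially self-adjoint with closure $A_{D,N}$, and $\dom(A)$ is a core.

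The cleanest route, which I expect to present, is to avoid re-running the abstract-adjoint computation and instead reduce to the single-operator case: show $A_{D,N}$ is self-adjoint by exhibiting it as $N^{-1}\sum_{n} U^n A_D U^{-n}$ where the operators $U^n A_D U^{-n}$ pairwise commute in a suitable resolvent sense is false in general, so instead I would use that $D\in\B(\H)$ makes $A_D$ a bounded perturbation of $2DA$ in form sense and rely on the Lemma \ref{Lemma_A_B}(a) template applied verbatim with $\mathscr A_{D,N}$ in place of $\mathscr A_B$: the symmetric operator is $\sum_n U^n(AD+DA)U^{-n}/N$ on $\dom(A)$; its adjoint has domain $\dom\big(\sum_n U^nABU^{-n}/N\big)=\dom(AD)=\dom(A_D)$ because $D$ and $U$ commute so all the ``$AB$'' parts have the same domain; and the symmetry-of-the-adjoint computation goes through word for word using $[A,D]^*=[D,A]$ and the fact that conjugation by $U^n$ commutes with taking $C^1(A)$-commutators up to the lower-order terms $[A,U^n]$. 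The main obstacle is precisely bookkeeping these lower-order commutator terms $U^n[A,U^{-n}]$ and $[A,U^n]U^{-n}$ (which are bounded since $U^n\in C^1(A)$) and verifying they do not enlarge the domain of the adjoint beyond $\dom(A_D)$ — once that is checked, essential self-adjointness on $\dom(A)$, and hence the claim that $\dom(A)$ is a core for the self-adjoint operator $A_{D,N}$, follows exactly as in Lemma \ref{Lemma_A_B}(a).
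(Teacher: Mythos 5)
Your overall structure is the right one and lines up with the paper's proof: invoke Lemma~\ref{Lemma_A_B}(a) with $B=D$ to get self-adjointness of $A_D$ with $\dom(A)$ a core, then use $[D,U^n]=0$ together with $U^n\in C^1(A)$ to show that $U^n$ preserves $\dom(A_D)$, so that each $U^nA_DU^{-n}$ is self-adjoint on $\dom(A_D)$ and $A_{D,N}$ is symmetric there. Where the two proofs diverge is in the final step. You propose to re-run the adjoint computation of Lemma~\ref{Lemma_A_B}(a) for the Ces\`aro-averaged operator $\mathscr A_{D,N}$, tracking the ``lower-order'' commutator terms $U^n[A,U^{-n}]$ and $[A,U^n]U^{-n}$ and checking that they do not enlarge $\dom\big((\mathscr A_{D,N})^*\big)$. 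That would work, but it is more labor than needed. The paper instead applies Lemma~\ref{Lemma_A_B}(b) with $C=U^n$ to get $U^n\in C^1(A_D)$, hence $[U^n,A_D]\in\B(\H)$, and then simply observes the identity
$$
A_{D,N}\varphi
=A_D\varphi+\frac1N\sum_{n=0}^{N-1}[U^n,A_D]U^{-n}\varphi,
\qquad\varphi\in\dom(A_D),
$$
which exhibits $A_{D,N}$ as $A_D$ plus a bounded symmetric operator. By the standard stability of self-adjointness under bounded symmetric perturbations, $A_{D,N}$ is then self-adjoint on $\dom(A_{D,N})=\dom(A_D)$, and any core for $A_D$ (in particular $\dom(A)$) remains a core for $A_{D,N}$. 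You are in effect circling this same observation when you note that the commutator terms are bounded and ``do not enlarge the domain,'' but packaging them as a single bounded perturbation of the already-self-adjoint $A_D$ collapses the whole bookkeeping step into one line and makes the core statement immediate, so I would recommend restructuring your final step along those lines.
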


\begin{proof}
$D$ is a bounded self-adjoint operator in $\H$ with $D\in C^1(A)$. So, Lemma
\ref{Lemma_A_B}(a) implies that $A_D$ is self-adjoint on
$
\dom(A_D)=\{\varphi\in\H\mid D\varphi\in\dom(A)\},
$
and that $\dom(A)$ is a core for $A_D$. Furthermore, since $U^n\in C^1(A)$ and
$[D,U^n]=0$ for each $n\in\Z$, we know from Lemma \ref{Lemma_A_B}(b) that
$U^n\in C^1(A_D)$ for each $n\in\Z$. Thus, $U^{-n}\;\!\dom(A_D)\subset\dom(A_D)$ for
each $n\in\Z$, and $A_{D,N}$ is a well-defined symmetric operator on $\dom(A_D)$.
Finally, we have for $\varphi\in\dom(A_D)$
\begin{equation}\label{eq_A_DN}
A_{D,N}\varphi
=\frac1N\sum_{n=0}^{N-1}U^nA_DU^{-n}\varphi
=A_D\varphi+\frac1N\sum_{n=0}^{N-1}[U^n,A_D]U^{-n}\varphi
\quad\hbox{with}\quad\frac1N\sum_{n=0}^{N-1}[U^n,A_D]U^{-n}\in\B(\H).
\end{equation}
Therefore, $A_{D,N}$ is self-adjoint on $\dom(A_{D,N})=\dom(A_D)$ and $\dom(A)$ is a
core for $A_{D,N}$.
\end{proof}

\begin{Lemma}\label{U_C_1_A_DN}
Let $U$ be a unitary operator in $\H$ and let $A$ be a self-adjoint operator in $\H$
with $U\in C^1(A)$. Assume that the strong limit
$D=\slim_{N\to\infty}\frac1N\big[A,U^N\big]U^{-N}$ exists and satisfies $D\in C^1(A)$.
Then, for each $N\in\N^*$,
\begin{enumerate}
\item[(a)] $U\in C^1(A_{D,N})$ with
$\big[A_{D,N},U\big]=\frac1N\sum_{n=0}^{N-1}U^n\big([A,U]D+D[A,U]\big)U^{-n}$,
\item[(b)] if $[A,U]\in C^{+0}(A_D)$, then $U\in C^{1+0}(A_{D,N})$,
\item[(c)] if $[A,U]\in C^1(A)$ and $[A,D]=0$, then $U\in C^2(A_{D,N})$.
\end{enumerate}
\end{Lemma}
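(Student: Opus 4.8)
The plan is to prove (a) by a direct computation of the commutator quadratic form of $U$ with respect to $A_{D,N}$ on the core $\dom(A)$, and then to deduce (b) and (c) from (a), using that $A_{D,N}$ is a bounded self-adjoint perturbation of $A_D$ together with the algebra properties of the classes $C^{+0}(A_D)$ and $C^1(A_D)$.

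For (a): since $D$ is bounded self-adjoint, $D\in C^1(A)$ and $[D,U^{\pm n}]=0$, Lemma \ref{Lemma_A_B}(b) (with $B=D$ and $C=U^{\pm n}$) gives $U^{\pm n}\in C^1(A_D)$ for every $n$, in particular $[A_D,U]=[A,U]D+D[A,U]$; moreover $U^{\pm n}$ and $D$ leave $\dom(A)$ invariant because $U,D\in C^1(A)$. Fixing $N\in\N^*$ and $\varphi\in\dom(A)$, setting $\chi_n:=U^{-n}\varphi\in\dom(A)\subset\dom(A_D)$, and inserting $A_{D,N}\varphi=\frac1N\sum_{n=0}^{N-1}U^nA_DU^{-n}\varphi$, one moves the unitaries $U^{\pm n}$ across the inner products to rewrite $\langle\varphi,UA_{D,N}\varphi\rangle-\langle A_{D,N}\varphi,U\varphi\rangle$ as $\frac1N\sum_{n=0}^{N-1}\big(\langle\chi_n,UA_D\chi_n\rangle-\langle A_D\chi_n,U\chi_n\rangle\big)=\frac1N\sum_{n=0}^{N-1}\langle\chi_n,[U,A_D]\chi_n\rangle$, i.e.
\[
\big\langle\varphi,UA_{D,N}\varphi\big\rangle-\big\langle A_{D,N}\varphi,U\varphi\big\rangle
=\Big\langle\varphi,\,\textstyle\frac1N\sum_{n=0}^{N-1}U^n[U,A_D]U^{-n}\,\varphi\Big\rangle,\qquad\varphi\in\dom(A).
\]
Since the operator on the right is bounded and $\dom(A)$ is a core for $A_{D,N}$ (Proposition \ref{prop_conjugate}), this form extends $\H$-continuously to $\dom(A_{D,N})$, so $U\in C^1(A_{D,N})$ with $[A_{D,N},U]=\frac1N\sum_{n=0}^{N-1}U^n[A_D,U]U^{-n}$; substituting the value of $[A_D,U]$ yields the formula in (a).

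For (b) and (c): by \eqref{eq_A_DN}, $A_{D,N}=A_D+R_N$ with $R_N:=\frac1N\sum_{n=0}^{N-1}[U^n,A_D]U^{-n}\in\B(\H)$, self-adjoint since $A_{D,N}$ and $A_D$ are. Elementary perturbation arguments give $C^1(A_D)\subseteq C^1(A_{D,N})$ and $C^{+0}(A_D)\subseteq C^{+0}(A_{D,N})$; moreover $C^1(A_D)$ and $C^{+0}(A_D)$ are algebras (the first by \cite{ABG96}, the second by a direct estimate), and $U^{\pm n},D\in C^1(A_D)\subset C^{+0}(A_D)$ by Lemma \ref{Lemma_A_B}(b) (using $[D,U^{\pm n}]=0$, $[D,D]=0$ and $U^{\pm n},D\in C^1(A)$). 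Hence, from the formula $[A_{D,N},U]=\frac1N\sum_{n=0}^{N-1}U^n\big([A,U]D+D[A,U]\big)U^{-n}$ of (a): if $[A,U]\in C^{+0}(A_D)$, then $[A_{D,N},U]$ is a finite sum of products of elements of $C^{+0}(A_D)$, hence lies in $C^{+0}(A_D)\subset C^{+0}(A_{D,N})$, which with (a) gives $U\in C^{1+0}(A_{D,N})$ and proves (b); and if instead $[A,U]\in C^1(A_D)$, the same argument gives $[A_{D,N},U]\in C^1(A_D)\subset C^1(A_{D,N})$, which with (a) gives $U\in C^2(A_{D,N})$ and proves (c).

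It remains, for (c), to derive $[A,U]\in C^1(A_D)$ from the hypotheses $[A,U]\in C^1(A)$ and $[A,D]=0$; by Lemma \ref{Lemma_A_B}(b) (with $B=D$, $C=[A,U]$) this reduces to checking $[D,[A,U]]=0$, which I expect to be the main point of the proof and which is precisely where $[A,D]=0$ is used. Indeed, $[A,D]=0$ implies that $D$ commutes with $\e^{\pm itA}$, and $[D,U^n]=0$ implies that $D$ commutes with $U$, so $D$ commutes with $\e^{-itA}U\e^{itA}$ for every $t\in\R$; differentiating this operator identity at $t=0$ — legitimate because $U\in C^1(A)$ — gives $D[A,U]=[A,U]D$. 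The remaining points need only routine care: the domain bookkeeping in (a) (all form manipulations carried out on the core $\dom(A)$, using $U^{\pm n}\dom(A)\subset\dom(A)$, $D\,\dom(A)\subset\dom(A)$, and the standard fact that a commutator form $\H$-continuous on a core is $\H$-continuous on the whole domain) and the stability and algebra properties of $C^1(A_D)$ and $C^{+0}(A_D)$, which are elementary or taken from \cite{ABG96}.
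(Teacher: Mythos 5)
Your proposal is correct and follows essentially the same approach as the paper: (a) by direct computation of the commutator form on the core $\dom(A)$, (b) by a Duhamel/bounded-perturbation argument reducing the $C^{+0}(A_{D,N})$ condition to the $C^{+0}(A_D)$ condition together with the algebra property of $C^{+0}$, and (c) by reducing to $[A,U]\in C^1(A_D)$ via Lemma~\ref{Lemma_A_B}(b) after verifying $\big[D,[A,U]\big]=0$. The only genuine difference is in that last verification: the paper computes $\big(D[A,U]-[A,U]D\big)\varphi=(DAU-DUA-AUD+UAD)\varphi=0$ directly on $\dom(A)$ using $U\dom(A)\subset\dom(A)$, $D\dom(A)\subset\dom(A)$, $[A,D]=0$ and $[D,U]=0$, whereas you observe that $D$ commutes with $\e^{\pm itA}$ (by $[A,D]=0$ and $D\in C^1(A)$) and with $U$, hence with $\e^{-itA}U\e^{itA}$, and differentiate this identity at $t=0$; both routes are correct, yours being slightly more conceptual while the paper's is a shorter algebraic check.
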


In view of the results (b) and (c), we wonder if the following is also true: If
$[A,U]\in C^{+0}(A)$ and $[A,D]=0$, then we have $U\in C^{1+0}(A_{D,N})$\,? Answering
this question in the affirmative could allow to relax some regularity assumptions in
applications.

\begin{proof}
(a) Let $\varphi\in\dom(A)$. Then, the inclusion $U\in C^1(A)$ and the relation
$[D,U]=0$ imply that
\begin{align*}
\big\langle A_{D,N}\varphi,U\varphi\big\rangle
-\big\langle\varphi,UA_{D,N}\varphi\big\rangle
&=\frac1N\sum_{n=0}^{N-1}\big(\big\langle U^nA_DU^{-n}\varphi,U\varphi\big\rangle
-\big\langle\varphi,UU^nA_DU^{-n}\varphi\big\rangle\big)\\
&=\frac1N\sum_{n=0}^{N-1}\big(\big\langle\varphi,U^n(AD+DA)U^{-n}U\varphi\big\rangle
-\big\langle\varphi,UU^n(AD+DA)U^{-n}\varphi\big\rangle\big)\\
&=\frac1N\sum_{n=0}^{N-1}
\big\langle\varphi,U^n\big([A,U]D+D[A,U]\big)U^{-n}\varphi\big\rangle.
\end{align*}
Since $\dom(A)$ is a core for $A_{D,N}$, it follows that $U\in C^1(A_{D,N})$ with
$$
\big[A_{D,N},U\big]=\frac1N\sum_{n=0}^{N-1}U^n\big([A,U]D+D[A,U]\big)U^{-n}.
$$

(b) We know from point (a) that $U\in C^1(A_{D,N})$ with
$\big[A_{D,N},U\big]=\frac1N\sum_{n=0}^{N-1}U^n\big([A,U]D+D[A,U]\big)U^{-n}$. Therefore, it
is sufficient to show for $n\in\{0,\ldots,N-1\}$ that
$$
\int_0^1\frac{\d t}t\,\big\|\e^{-itA_{D,N}}B_n\e^{itA_{D,N}}-B_n\big\|<\infty
\quad\hbox{with}\quad
B_n:=U^n\big([A,U]D+D[A,U]\big)U^{-n}.
$$
But, for each $t\in\R$ and $\varphi\in\dom(A_D)$ we obtain from \eqref{eq_A_DN}
\begin{align*}
\e^{itA_{D,N}}\varphi-\e^{itA_D}\varphi
&=\int_0^t\d s\,\frac\d{\d s}\big(\e^{isA_{D,N}}\e^{-isA_D}-1\big)\e^{itA_D}\varphi\\
&=\int_0^t\d s\,\e^{isA_{D,N}}\frac iN\sum_{n=0}^{N-1}[U^n,A_D]U^{-n}
\e^{i(t-s)A_D}\varphi.
\end{align*}
So, there exists $C_t\in\B(\H)$ with $\|C_t\|\le{\rm Const.}\;\!|t|$ such that
$\e^{itA_{D,N}}=\e^{itA_D}+C_t$, and thus
\begin{equation}\label{eq_integral}
\int_0^1\frac{\d t}t\,\big\|\e^{-itA_{D,N}}B_n\e^{itA_{D,N}}-B_n\big\|
\le{\rm Const}+\int_0^1\frac{\d t}t\,\big\|\e^{-itA_D}B_n\e^{itA_D}-B_n\big\|.
\end{equation}
Now, the integral $\int_0^1\frac{\d t}t\,\big\|\e^{-itA_D}S\e^{itA_D}-S\big\|$ is
finite for $S=U^n$, $S=[A,U]$, $S=D$ and $S=U^{-n}$ due to the assumptions and Lemma
\ref{Lemma_A_B}(b). So, the integral on the r.h.s. of \eqref{eq_integral} is also
finite (see \cite[Prop.~5.2.3(b)]{ABG96}), and thus the claim is proved.

(c) We know from point (a) that $U\in C^1(A_{D,N})$ with
$\big[A_{D,N},U\big]=\frac1N\sum_{n=0}^{N-1}U^n\big([A,U]D+D[A,U]\big)U^{-n}$. Therefore, it
is sufficient to show that $D\in C^1(A_{D,N})$ and $[A,U]\in C^1(A_{D,N})$. But, we
know from \eqref{eq_A_DN} that
$$
A_{D,N}
=\frac1N\sum_{n=0}^{N-1}U^nA_DU^{-n}
=A_D+\frac1N\sum_{n=0}^{N-1}[U^n,A_D]U^{-n}
\quad\hbox{with}\quad\frac1N\sum_{n=0}^{N-1}[U^n,A_D]U^{-n}\in\B(\H).
$$
So, $C^1(A_D)\subset C^1(A_{D,N})$, and thus it is sufficient to show that
$D\in C^1(A_D)$ and $[A,U]\in C^1(A_D)$. Now, since $D\in C^1(A)$, the inclusion
$D\in C^1(A_D)$ follows directly from Lemma \ref{Lemma_A_B}(b). To show that
$[A,U]\in C^1(A_D)$, let $\varphi\in\dom(A)$. Then, the inclusions $U,D\in C^1(A)$ and
the relations $[A,D]=[U,D]=0$ imply
$$
\big(D[A,U]-[A,U]D\big)\varphi
=\big(DAU-DUA-AUD+UAD\big)\varphi
=0.
$$
So, $\big[D,[A,U]\big]=0$ due to the density of $\dom(A)$ in $\H$. In consequence, the
operator $[A,U]\in\B(\H)$ satisfies $[A,U]\in C^1(A)$ and $\big[D,[A,U]\big]=0$, and
thus $[A,U]\in C^1(A_D)$ due to Lemma \ref{Lemma_A_B}(b).
\end{proof}

In the next proposition, we prove a strict Mourre estimate for $U$ in the subspace
$\ker(D)^\perp$. The fact that we obtain a Mourre estimate for $U$ with no compact
term is consistent with the result of Theorem \ref{thm_mixing}(b) on the absence of
point spectrum of $U$ in $\ker(D)^\perp$. The uniform limit in the next proposition
(and in the sequel) refers to the limit in the topology of $\B(\H)$.

\begin{Proposition}[Strict mourre estimate for $U$]\label{prop_Mourre}
Let $U$ be a unitary operator in $\H$ and let $A$ be a self-adjoint operator in $\H$
with $U\in C^1(A)$. Assume that the uniform limit
$D=\ulim_{N\to\infty}\frac1N\big[A,U^N\big]U^{-N}$ exists and satisfies $D\in C^1(A)$.
Then, for each $\varepsilon>0$ there exists $N_\varepsilon\in\N^*$ such that
$$
U^{-1}\big[A_{D,N},U\big]\ge2D^2-\varepsilon\quad\hbox{for $N\ge N_\varepsilon$.}
$$
In particular, if there exist an open set $\Theta\subset\S^1$ and $c_\Theta>0$ such
that $D^2E^U(\Theta)\ge c_\Theta E^U(\Theta)$, then there exist $N\in\N^*$ and $a>0$
such that
$$
E^U(\Theta)U^{-1}\big[A_{D,N},U\big]E^U(\Theta)\ge aE^U(\Theta).
$$
\end{Proposition}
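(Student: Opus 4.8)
\noindent\emph{Proof proposal.} The plan is to rewrite $U^{-1}\big[A_{D,N},U\big]$ as a Ces\`aro average that converges \emph{in norm} to $2D^2$, and then read off the operator inequality. Set $C:=[A,U]U^{-1}\in\B(\H)$. The telescoping identity $[A,U^N]=\sum_{n=0}^{N-1}U^n[A,U]U^{N-1-n}$ gives $\frac1N\big[A,U^N\big]U^{-N}=\frac1N\sum_{n=0}^{N-1}U^nCU^{-n}=:D_N$, so the hypothesis is exactly that $D=\ulim_{N\to\infty}D_N$; recall also $D\in C^1(A)$ and $[D,U]=0$ from the excerpt. Using $[D,U]=0$ to rewrite $[A,U]D+D[A,U]=CUD+DCU=(CD+DC)U$, the formula of Lemma~\ref{U_C_1_A_DN}(a) becomes $\big[A_{D,N},U\big]=\big(\frac1N\sum_{n=0}^{N-1}U^n(CD+DC)U^{-n}\big)U$, so that
$$
U^{-1}\big[A_{D,N},U\big]=\frac1N\sum_{n=0}^{N-1}U^{n-1}(CD+DC)U^{-(n-1)}.
$$

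Next I would reindex $m=n-1$ and compare with $\frac1N\sum_{n=0}^{N-1}U^n(CD+DC)U^{-n}$: the two Ces\`aro sums differ only by the boundary term $\frac1N\big(U^{-1}(CD+DC)U-U^{N-1}(CD+DC)U^{1-N}\big)$, of norm $\le2\|CD+DC\|/N$. In the remaining average I would pull $D$ past the powers of $U$ (again by $[D,U]=0$) to obtain $\frac1N\sum_{n=0}^{N-1}U^n(CD+DC)U^{-n}=D_ND+DD_N$. Since $D_N\to D$ in operator norm by hypothesis, $D_ND+DD_N\to2D^2$ in operator norm, hence so does $U^{-1}\big[A_{D,N},U\big]$. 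Both operators being self-adjoint, norm convergence yields, for each $\varepsilon>0$, an $N_\varepsilon\in\N^*$ with $U^{-1}\big[A_{D,N},U\big]\ge2D^2-\varepsilon$ for all $N\ge N_\varepsilon$.

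For the last assertion I would just specialise $\varepsilon$. If $D^2E^U(\Theta)\ge c_\Theta E^U(\Theta)$ with $c_\Theta>0$, then $D$ commutes with $E^U(\;\!\cdot\;\!)$ (once more because $[D,U]=0$), so $E^U(\Theta)D^2E^U(\Theta)=D^2E^U(\Theta)\ge c_\Theta E^U(\Theta)$; choosing $\varepsilon=c_\Theta$ and compressing the inequality $U^{-1}\big[A_{D,N},U\big]\ge2D^2-c_\Theta$ by $E^U(\Theta)$ on both sides gives $E^U(\Theta)U^{-1}\big[A_{D,N},U\big]E^U(\Theta)\ge c_\Theta E^U(\Theta)$ for $N\ge N_{c_\Theta}$, i.e.\ the claim with $a=c_\Theta$.

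I do not expect a genuine obstacle: the argument rests entirely on Lemma~\ref{U_C_1_A_DN}(a) and the commutation $[D,U]=0$. The two steps needing a little care are the index shift — it replaces one Ces\`aro sum by another, but the difference is a boundary term of order $1/N$ — and the identification $\frac1N\sum_{n=0}^{N-1}U^n(CD+DC)U^{-n}=D_ND+DD_N$. The conceptual point is that $D_ND+DD_N\to2D^2$, for which one genuinely needs the \emph{uniform} (not merely strong) convergence $D_N\to D$ assumed here, since only norm convergence of $U^{-1}\big[A_{D,N},U\big]$ to $2D^2$ gives a uniform operator lower bound.
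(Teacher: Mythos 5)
Your proof is correct and follows essentially the same route as the paper's: start from Lemma~\ref{U_C_1_A_DN}(a), use $[D,U]=0$ to rewrite the Ces\`aro average as (roughly) $D_ND+DD_N$, and invoke the uniform convergence $D_N\to D$ to pass to an operator lower bound. The one cosmetic difference is your index shift, which introduces a harmless $O(1/N)$ boundary term: the paper instead writes $U^{-1}\big[A_{D,N},U\big]=U^{-1}(D_ND+DD_N)U$ exactly (no reindexing), and then uses $U^{-1}(2D^2)U=2D^2$ to conclude. Your handling of the second claim (specialising $\varepsilon=c_\Theta$ and compressing by $E^U(\Theta)$) is equivalent to the paper's.
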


\begin{proof}
The limit $D$ exists in the uniform topology. Therefore, the limit $D$ also exists in
the strong topology, and Lemma \ref{U_C_1_A_DN}(a) and the relation $[D,U]=0$ hold.
Using these facts and the notation
$$
D_N
:=\frac1N\big[A,U^N\big]U^{-N}
=\frac1N\sum_{n=0}^{N-1}U^n\big([A,U]U^{-1}\big)U^{-n},
\quad N\in\N^*,
$$
we obtain
\begin{align*}
U^{-1}\big[A_{D,N},U\big]
&=U^{-1}\left(\frac1N\sum_{n=0}^{N-1}U^n
\big([A,U]U^{-1}D+D[A,U]U^{-1}\big)U^{-n}\right)U\\
&=U^{-1}\big(D_ND+DD_N\big)U\\
&=2D^2+U^{-1}\big((D_N-D)D+D(D_N-D)\big)U.
\end{align*}
Now, since $D\in\B(\H)$ and $\lim_{N\to\infty}\|D_N-D\|=0$, there exists for each
$\varepsilon>0$ an index $N_\varepsilon\in\N^*$ such that
$$
\big\|\big(D_N-D\big)D+D\big(D_N-D\big)\big\|\le\varepsilon
\quad\hbox{for $N\ge N_\varepsilon$.}
$$
It follows that
\begin{equation}\label{eq_epsilon}
U^{-1}\big[A_{D,N},U\big]
=2D^2-\varepsilon+U^{-1}\big(\varepsilon+\big(D_N-D\big)D+D\big(D_N-D\big)\big)U
\ge2D^2-\varepsilon
\quad\hbox{for $N\ge N_\varepsilon$.}
\end{equation}

To show the second claim, we note that if there exist an open set $\Theta\subset\S^1$
and $c_\Theta>0$ such that $D^2E^U(\Theta)\ge c_\Theta E^U(\Theta)$, then we obtain
from \eqref{eq_epsilon} that
$$
E^U(\Theta)U^{-1}\big[A_{D,N},U\big]E^U(\Theta)
\ge E^U(\Theta)\big(2D^2-\varepsilon\big)E^U(\Theta)
\ge\big(2c_\Theta-\varepsilon\big)E^U(\Theta)
\quad\hbox{for $N\ge N_\varepsilon$.}
$$
Since $\varepsilon>0$ is arbitrary, this implies that there exist $N\in\N^*$ and $a>0$
such that
$$
E^U(\Theta)U^{-1}\big[A_{D,N},U\big]E^U(\Theta)\ge aE^U(\Theta).
$$
\end{proof}

We conclude this section with a criterion for the absolute continuity of the spectrum
of $U$ in the subspace $\ker(D)^\perp$.

\begin{Theorem}[Absolutely continuous spectrum of $U$]\label{thm_absolute}
Let $U$ be a unitary operator in $\H$ and let $A$ be a self-adjoint operator in $\H$
with $U\in C^1(A)$. Assume that the uniform limit
$D=\ulim_{N\to\infty}\frac1N\big[A,U^N\big]U^{-N}$ exists, that $D\in C^1(A)$, that
$[A,U]\in C^{+0}(A_D)$, and that there exist an open set $\Theta\subset\S^1$ and
$c_\Theta>0$ such that
$$
D^2E^U(\Theta)\ge c_\Theta E^U(\Theta).
$$
Then, $U$ has purely absolutely continuous spectrum in $\Theta\cap\sigma(U)$.
\end{Theorem}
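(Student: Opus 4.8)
The plan is to assemble the apparatus of this section into a single conjugate-operator argument, with the Ces\`aro-averaged operators $A_{D,N}$ playing the role of conjugate operators for $U$. First I would invoke Proposition \ref{prop_conjugate}: since the uniform limit $D$ exists it exists a fortiori as a strong limit, and $D$ is a bounded self-adjoint operator with $D\in C^1(A)$ by hypothesis, so for every $N\in\N^*$ the operator $A_{D,N}$ is self-adjoint in $\H$ and $\dom(A)$ is a core for it. This makes the whole family $\{A_{D,N}\}_{N\in\N^*}$ available as candidate conjugate operators.

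Next I would extract the regularity input required by Theorem \ref{thm_spectral}. The hypothesis $[A,U]\in C^{+0}(A_D)$ is precisely the extra assumption in Lemma \ref{U_C_1_A_DN}(b), so that lemma gives $U\in C^{1+0}(A_{D,N})$ for every $N\in\N^*$. Then I would turn to Proposition \ref{prop_Mourre}: its hypotheses (the uniform limit $D$ exists, $D\in C^1(A)$, and $D^2E^U(\Theta)\ge c_\Theta E^U(\Theta)$ on the open set $\Theta$) are exactly what we are given, so it produces an index $N\in\N^*$ and a constant $a>0$ with the \emph{strict} Mourre estimate $E^U(\Theta)U^{-1}[A_{D,N},U]E^U(\Theta)\ge aE^U(\Theta)$.

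Fixing that particular $N$, the pair $(U,A_{D,N})$ then satisfies all the hypotheses of Theorem \ref{thm_spectral}: $A_{D,N}$ is self-adjoint, $U\in C^{1+0}(A_{D,N})$, and the Mourre estimate \eqref{eq_Mourre_bis} holds on the open set $\Theta$ with $K=0$. The ``$K=0$'' clause of Theorem \ref{thm_spectral} then yields that $U$ has no singular spectrum in $\Theta$; equivalently, the restriction of $E^U$ to $\Theta$ is purely absolutely continuous, i.e.\ $U$ has purely absolutely continuous spectrum in $\Theta\cap\sigma(U)$ (the statement being vacuous if $\Theta\cap\sigma(U)=\varnothing$).

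I do not expect a genuinely hard step here: all the analysis — self-adjointness of $A_{D,N}$, the propagation of $C^{+0}$ regularity from $A_D$ to $A_{D,N}$, and the passage from the lower bound on $D^2$ to a strict Mourre estimate — has already been done in Proposition \ref{prop_conjugate}, Lemma \ref{U_C_1_A_DN}, and Proposition \ref{prop_Mourre}. The only point needing a little care is bookkeeping: one must commit to the single index $N$ delivered by Proposition \ref{prop_Mourre} and note that this same $N$ is admissible in Lemma \ref{U_C_1_A_DN}(b) (which holds for all $N\in\N^*$), so that the conjugate operator used for the Mourre estimate is the one for which the $C^{1+0}$ regularity is known.
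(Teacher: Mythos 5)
Your proposal is correct and follows essentially the same route as the paper: pass from the uniform limit to the strong limit, invoke Lemma \ref{U_C_1_A_DN}(b) for the $C^{1+0}(A_{D,N})$ regularity, invoke Proposition \ref{prop_Mourre} for the strict Mourre estimate at some $N$, and conclude via the $K=0$ clause of Theorem \ref{thm_spectral}. Your explicit remark that the single index $N$ produced by Proposition \ref{prop_Mourre} is admissible in Lemma \ref{U_C_1_A_DN}(b) (since the latter holds for all $N$) is a small but welcome clarification that the paper leaves implicit.
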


\begin{proof}
Since the limit $D$ exists in the uniform topology, the limit $D$ also exists in the
strong topology. So, Lemma \ref{U_C_1_A_DN}(b) applies and $U\in C^{1+0}(A_{D,N})$.
Moreover, we know from Proposition \ref{prop_Mourre} that there exist $N\in\N^*$ and
$a>0$ such that
$$
E^U(\Theta)U^{-1}\big[A_{D,N},U\big]E^U(\Theta)\ge aE^U(\Theta).
$$
Therefore, it follows by Theorem \ref{thm_spectral} that $U$ has purely absolutely
continuous spectrum in $\Theta\cap\sigma(U)$.
\end{proof}

\section{Cocycles with values in compact Lie groups}\label{section_cocycles}
\setcounter{equation}{0}

In this section, we apply the theory of Section \ref{section_commutators} to obtain
general results on the degree, the strong mixing property, and the absolutely
continuous spectrum of skew products transformations associated to cocycles with
values in compact Lie groups. We start with the definition of the skew products
transformations that we will consider.

Let $X$ be a smooth compact second countable Hausdorff manifold with Borel probability
measure $\mu_X$, and let $\{F_t\}_{t\in\R}$ be a $C^1$ measure-preserving flow on $X$.
The family of composition operators $\{V_t\}_{t\in\R}$ given by
$$
V_t\;\!\varphi:=\varphi\circ F_t,\quad\varphi\in\ltwo(X,\mu_X),
$$
defines a strongly continuous one-parameter unitary group satisfying
$V_t\;\!C^1(X)\subset C^1(X)$ for each $t\in\R$. Thus, Nelson's criterion
\cite[Thm.~VIII.10]{RS80} implies that the generator of the group $\{V_t\}_{t\in\R}$
$$
H\varphi:=\slim_{t\to0}it^{-1}(V_t-1)\varphi,
\quad\varphi\in\dom(H):=\left\{\varphi\in\ltwo(X,\mu_X)
\mid\lim_{t\to0}|t|^{-1}\big\|(V_t-1)\varphi\big\|_{\ltwo(X,\mu_X)}<\infty\right\},
$$
is essentially self-adjoint on $C^1(X)$ and given by
$$
H\varphi:=i\L_Y\varphi,\quad\varphi\in C^1(X),
$$
with $Y$ the $C^0$ vector field of the flow $\{F_t\}_{t\in\R}$ and $\L_Y$ the
corresponding Lie derivative.

Let $G$ be a compact Lie group with normalised Haar measure $\mu_G$ and identity
$e_G$. Then, each measurable function $\phi:X\to G$ induces a measurable cocycle
$X\times\Z\ni(x,n)\mapsto\phi^{(n)}(x)\in G$ over $F_1$ given by
$$
\phi^{(n)}(x):=
\begin{cases}
\phi(x)(\phi\circ F_1)(x)\cdots(\phi\circ F_{n-1})(x) & \hbox{if }n\ge1\\
\hfil e_G & \hbox{if }n=0\\
\hfil\big(\phi^{(-n)}\circ F_n\big)(x)^{-1} & \hbox{if }n\le-1.
\end{cases}
$$
One thus calls cocycle any measurable function $\phi:X\to G$. The skew product
associated to $\phi$ is the transformation
$$
T_\phi:X\times G\to X\times G,~~(x,g)\mapsto\big(F_1(x),g\;\!\phi(x)\big).
$$
$T_\phi$ is an invertible automorphism of the measure space
$(X\times G,\mu_X\otimes\mu_G)$ with iterates
\begin{equation}\label{eq_T^n}
T_\phi^n(x,g)=\big(F_n(x),g\;\!\phi^{(n)}(x)\big),
\quad n\in\Z,~g\in G,~\hbox{$\mu_X$-almost every $x\in X$.}
\end{equation}
The corresponding Koopman operator
$$
U_\phi\;\!\psi
:=\psi\circ T_\phi,\quad\psi\in\H:=\ltwo(X\times G,\mu_X\otimes\mu_G),
$$
is a unitary operator in $\H$.

\begin{Remark}\label{remark_cohomologous}
Two cocycles $\phi,\delta:X\to G$ are said to be cohomologous if there exists a
measurable function $\zeta:X\to G$, called transfer function, such that
$$
\phi(x)=\zeta(x)^{-1}\;\!\delta(x)\;\!(\zeta\circ F_1)(x)
\quad\hbox{ for $\mu_X$-almost every $x\in X$.}
$$
In such a case, the map $\iota:X\times G\to X\times G$ given by
$$
\iota(x,g):=\big(x,g\;\!\zeta(x)\big),
\quad g\in G,~\hbox{$\mu_X$-almost every $x\in X$,}
$$
is a metrical isomorphism of $T_\phi$ and $T_\delta$ \cite[Thm.~10.2.1]{CFS82}, the
operator
$$
S_\iota:\H\to\H,~~\psi\mapsto\psi\circ\iota,
$$
is unitary, and the Koopman operators $U_\phi$ and $U_\delta$ are unitarily equivalent
with unitary equivalence given by $S_\iota U_\delta(S_\iota)^{-1}=U_\phi$
\cite[Sec.~1.7]{CFS82}.
\end{Remark}

Let $\widehat G$ be the set of (equivalence classes of) finite-dimensional irreducible
unitary representations (IUR) of $G$. Then, each representation $\pi\in\widehat G$ is
a $C^\infty$ group homomorphism from $G$ to the unitary group $\U(d_\pi)$ of degree
$d_\pi:=\dim(\pi)$, and Peter-Weyl's theorem implies that the set of all matrix
elements $\{\pi_{jk}\}_{j,k=1}^{d_\pi}$ of all representations $\pi\in\widehat G$
forms an orthogonal basis of $\ltwo(G,\mu_G)$ with orthogonality relation
\cite[Cor.~4.10(b)]{Kna02}
\begin{equation}\label{eq_orthogonality}
\big\langle\pi_{jk},\pi_{j'k'}\big\rangle_{\ltwo(G,\mu_G)}
=(\d_\pi)^{-1}\;\!\delta_{jj'}\;\!\delta_{kk'},\quad j,j',k,k'\in\{1,\ldots,d_\pi\}.
\end{equation}
Accordingly, one has the orthogonal decomposition
\begin{equation}\label{eq_decompo}
\H=\bigoplus_{\pi\in\widehat G}\,\bigoplus_{j=1}^{d_\pi}\H^{(\pi)}_j
\quad\hbox{with}\quad
\H^{(\pi)}_j:=\bigoplus_{k=1}^{d_\pi}\ltwo(X,\mu_X)\otimes\{\pi_{jk}\}.
\end{equation}
A direct calculation shows that the operator $U_\phi$ is reduced by the decomposition
\eqref{eq_decompo}, with restriction $U_{\phi,\pi,j}:=U_\phi\big|_{\H^{(\pi)}_j}$
given by
$$
U_{\phi,\pi,j}\sum_{k=1}^{d_\pi}\varphi_k\otimes\pi_{jk}
=\sum_{k,\ell=1}^{d_\pi}\big(\varphi_k\circ F_1\big)\big(\pi_{\ell k}\circ\phi\big)
\otimes\pi_{j\ell},\quad\varphi_k\in\ltwo(X,\mu_X).
$$
This, together with \eqref{eq_T^n}, implies that
\begin{equation}\label{eq_U_n}
\big(U_{\phi,\pi,j}\big)^n\sum_{k=1}^{d_\pi}\varphi_k\otimes\pi_{jk}
=\sum_{k,\ell=1}^{d_\pi}\big(\varphi_k\circ F_n\big)
\big(\pi_{\ell k}\circ\phi^{(n)}\big)\otimes\pi_{j\ell},
\quad n\in\Z,~\varphi_k\in\ltwo(X,\mu_X).
\end{equation}

In the lemma below, we generalise the definition of a unitary operator introduced by
K. Fr{\c{a}}czek in the particular case $X=\T$ and $G=\SU(2)$ (see
\cite[Sec.~2]{Fra00_2}). For this, we have to recall some basic facts on Lie groups
and fix some notations. Since $G$ is a compact Lie group $G$, there exists an
injective $C^\infty$ group homomorphism $\pi_*:G\to\U(n)$ for some $n\in\N^*$
\cite[Cor.~4.22]{Kna02}. The Lie algebra $\g$ of $G$ is a real vector space of
dimension $\dim(\g)=\dim(G)$ which supports an $\Ad$-invariant scalar product
$\langle\;\!\cdot\;\!,\;\!\cdot\;\!\rangle_\g:\g\times\g\to\C$
\cite[Prop.~4.24]{Kna02}. That is, if $g\in G$ and $\Ad_g$ is the inner automorphism
of $\g$ given by
$$
\Ad_g:\g\to\g,~~Z\mapsto gZg^{-1},
$$
then we have
\begin{equation}\label{eq_invariance}
\big\langle\Ad_gZ_1,\Ad_gZ_2\big\rangle_\g=\big\langle Z_1,Z_2\big\rangle_\g
\quad\hbox{for all $Z_1,Z_2\in\g$.}
\end{equation}
We equip $\g$ with the topology induced by the norm $\|\cdot\|_\g$ associated to the
scalar product $\langle\;\!\cdot\;\!,\;\!\cdot\;\!\rangle_\g$ and with the
corresponding Borel $\sigma$-algebra, so that $\g$ is a topological measurable space.
Finally, we write $\ltwo(X,\g)$ for the Hilbert space of measurable functions
$f_1,f_2:X\to\g$ with scalar product
$$
\big\langle f_1,f_2\big\rangle_{\ltwo(X,\g)}
:=\int_x\d\mu_X(x)\,\big\langle f_1(x),f_2(x)\big\rangle_\g.
$$

\begin{Lemma}\label{lemma_W}
The operator $W_\phi:\ltwo(X,\g)\to\ltwo(X,\g)$ given by
$$
\big(W_\phi f\big)(x):=\Ad_{\phi(x)}(f\circ F_1)(x),
\quad f\in\ltwo(X,\g),~\hbox{$\mu_X$-almost every $x\in X$,}
$$
is unitary in $\ltwo(X,\g)$ and satisfies
\begin{equation}\label{eq_W^n}
\big(W_\phi^nf\big)(x):=\Ad_{\phi^{(n)}(x)}(f\circ F_n)(x),
\quad n\in\Z,~f\in\ltwo(X,\g),~\hbox{$\mu_X$-almost every $x\in X$.}
\end{equation}
\end{Lemma}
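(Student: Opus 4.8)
The plan is to derive the whole statement from two structural facts: the pointwise $\Ad$-invariance of $\langle\cdot,\cdot\rangle_\g$ recorded in \eqref{eq_invariance}, and the fact that each $F_t$ preserves $\mu_X$, so that the composition operator $f\mapsto f\circ F_t$ is unitary on $\ltwo(X,\g)$ exactly as it is on $\ltwo(X,\mu_X)$. First, I would observe that $W_\phi$ factorises as $W_\phi=M_\phi V_1$, where $V_1f:=f\circ F_1$ is the (unitary) composition operator on $\ltwo(X,\g)$ and $M_\phi$ is the operator of pointwise multiplication, $(M_\phi f)(x):=\Ad_{\phi(x)}f(x)$. The map $x\mapsto\Ad_{\phi(x)}$ is continuous, hence measurable, since $\phi$ is continuous and $g\mapsto\Ad_g$ is smooth, and $\Ad_{\phi(x)}$ is an orthogonal transformation of $(\g,\langle\cdot,\cdot\rangle_\g)$ for every $x$ by \eqref{eq_invariance}; therefore $M_\phi$ is a well-defined unitary operator on $\ltwo(X,\g)$ whose inverse is pointwise multiplication by $(\Ad_{\phi(x)})^{-1}=\Ad_{\phi(x)^{-1}}$. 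Consequently $W_\phi$ is unitary, being a product of two unitaries, and $W_\phi^{-1}=V_1^{-1}M_\phi^{-1}$ acts by $(W_\phi^{-1}f)(x)=\Ad_{(\phi\circ F_{-1})(x)^{-1}}(f\circ F_{-1})(x)$.

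Next I would prove \eqref{eq_W^n} for $n\in\N$ by induction. The case $n=0$ holds because $\phi^{(0)}\equiv e_G$, $\Ad_{e_G}=\mathrm{id}_\g$ and $F_0=\mathrm{id}_X$. For the step from $n$ to $n+1$, I would use the three elementary identities $\Ad_g\Ad_h=\Ad_{gh}$ (since $\Ad$ is a group homomorphism), $F_n\circ F_1=F_{n+1}$ (flow property), and the cocycle relation $\phi^{(n+1)}(x)=\phi(x)\,(\phi^{(n)}\circ F_1)(x)$, which is immediate from the defining formula for $\phi^{(n)}$; these give
$$
\big(W_\phi^{n+1}f\big)(x)=\Ad_{\phi(x)}\big((W_\phi^nf)\circ F_1\big)(x)
=\Ad_{\phi(x)\,\phi^{(n)}(F_1(x))}(f\circ F_{n+1})(x)
=\Ad_{\phi^{(n+1)}(x)}(f\circ F_{n+1})(x).
$$
For $n\le-1$ I would run the symmetric downward induction, now applied to $W_\phi^{-1}$: the case $n=-1$ is exactly the formula for $W_\phi^{-1}$ obtained above, since $(\phi\circ F_{-1})(x)^{-1}=\phi^{(-1)}(x)$ by definition of $\phi^{(n)}$ for negative $n$, and the passage from $n$ to $n-1$ uses $F_n\circ F_{-1}=F_{n-1}$ together with the cocycle relation $\phi^{(n-1)}(x)=\phi^{(-1)}(x)\,(\phi^{(n)}\circ F_{-1})(x)$ in the same way. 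This establishes \eqref{eq_W^n} for all $n\in\Z$.

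There is no genuine obstacle here; the lemma is essentially bookkeeping built on the two facts above. The only points that require a little care are: noting the measurability of $x\mapsto\Ad_{\phi(x)}$, so that $M_\phi$ (equivalently $W_\phi$) indeed maps $\ltwo(X,\g)$ into itself; and keeping exact track of the arguments of $F_t$ and of $\phi$ in the cocycle identities, in particular the signs in the negative-iterate step, where it is cleanest to invoke the general cocycle relation $\phi^{(m+n)}(x)=\phi^{(m)}(x)\,(\phi^{(n)}\circ F_m)(x)$, valid for all $m,n\in\Z$, rather than re-deriving each case separately.
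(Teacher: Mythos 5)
Your proof is correct and follows essentially the same route as the paper's: $\Ad$-invariance of the inner product \eqref{eq_invariance} plus $\mu_X$-preservation of the flow give unitarity, and an induction driven by the cocycle identity gives \eqref{eq_W^n}. The only difference is organizational: you package unitarity via the factorization $W_\phi=M_\phi V_1$ into two unitaries (and then get unitarity of all powers for free), whereas the paper defines the whole family $W_n$, checks each is isometric, and exhibits $W_{-n}$ as a two-sided inverse; the two arguments carry the same content. One small inaccuracy to flag: the lemma is stated for $\phi$ merely \emph{measurable} (see the paragraph in Section 3 introducing cocycles), so your appeal to continuity of $\phi$ is not granted by the hypotheses. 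This is harmless, since measurability of $\phi$ together with continuity of $g\mapsto\Ad_g$ already yields measurability of $x\mapsto\Ad_{\phi(x)}$, which is all that is needed for $M_\phi$ to be well defined on $\ltwo(X,\g)$.
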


\begin{proof}
For each $n\in\Z$, let $W_n:\ltwo(X,\g)\to\ltwo(X,\g)$ be the operator given by
$$
\big(W_nf\big)(x):=\Ad_{\phi^{(n)}(x)}(f\circ F_n)(x),
\quad f\in\ltwo(X,\g),~\hbox{$\mu_X$-almost every $x\in X$.}
$$
Then, \eqref{eq_invariance} and the fact that $\{F_t\}_{t\in\R}$ preserves the measure
$\mu_X$ imply that
$$
\big\|W_nf\big\|_{\ltwo(X,\g)}^2
=\int_X\d\mu_X(x)\,\big\|\Ad_{\phi^{(n)}(x)}(f\circ F_n)(x)\big\|_\g^2
=\int_X\d\mu_X(x)\,\big\|(f\circ F_n)(x)\big\|_\g^2
=\|f\|_{\ltwo(X,\g)}^2.\label{eq_isometric}
$$
Thus, $W_n$ is isometric. Moreover, direct calculations using the definition of
$\phi^{(n)}$ show that
$$
W_nW_{-n}=W_{-n}W_n=1.
$$
So, $W_n$ is unitary with inverse $W_{-n}$.

It remains to show that $W_\phi^n=W_n$. We show it by induction on $n$ in the case
$n\in\N$ (the case $n\in-\N$ is analogous). For $n=0$ and $n=1$, we verify directly
that $W_\phi^0=W_0$ and $W_\phi^1=W_1$. For $n\ge2$, we make the induction hypothesis
that $W_\phi^{n-1}=W_{n-1}$. Then, we obtain for $f\in\ltwo(X,\g)$ and $\mu_X$-almost
every $x\in X$ that
\begin{align*}
\big(W_\phi^nf\big)(x)
&=\big(W_1W_{n-1}f\big)(x)\\
&=\phi(x)\big(W_{n-1}f\circ F_1\big)(x)\phi(x)^{-1}\\
&=\phi(x)\big(\phi^{(n-1)}\circ F_1\big)(x)(f\circ F_n)(x)
\big(\phi^{(n-1)}\circ F_1\big)(x)^{-1}\phi(x)^{-1}\\
&=\phi^{(n)}(x)(f\circ F_n)(x)\phi^{(n)}(x)^{-1}\\
&=\big(W_nf\big)(x),
\end{align*}
and thus $W_\phi^n=W_n$ for all $n\in\N$.
\end{proof}

In the next lemma, we collect some convergence results for the sequence
$\frac1N\sum_{n=0}^{N-1}W_\phi^n$, $N\in\N^*$, which generalise results of K.
Fr{\c{a}}czek in the particular case $X=\T$ and $G=\SU(2)$ (see
\cite[Lemmas~2.1 \& 2.2]{Fra00_2}). We write $\lone(X,\g)$ for the Banach space of
measurable functions $f:X\to\g$ with norm
$$
\|f\|_{\lone(X,\g)}:=\int_X\d\mu_X(x)\,\|f(x)\|_\g,
$$
and $\lone(X\times G,\g)$ for the Banach space of measurable functions
$\widetilde f:X\times G\to\g$ with norm
$$
\|\widetilde f\;\!\|_{\lone(X\times G,\g)}
:=\int_{X\times G}\d(\mu_X\otimes\mu_G)(x,g)\,\|\widetilde f(x,g)\|_\g.
$$

\begin{Lemma}\label{lemma_P_phi}
Let $P_\phi\in\B\big(\ltwo(X,\g)\big)$ be the orthogonal projection onto
$\ker(1-W_\phi)$.
\begin{enumerate}
\item[(a)] If $f\in\ltwo(X,\g)$, then the sequence $\frac1N\sum_{n=0}^{N-1}W_\phi^nf$
converges in $\ltwo(X,\g)$ to $P_\phi f$.
\item[(b)] If $f\in\lone(X,\g)$, then the sequence
$\frac1N\sum_{n=0}^{N-1}\big(W_\phi^nf\big)(x)$ converges in $\g$ for $\mu_X$-almost
every $x\in X$.
\item[(c)] If $f\in\ltwo(X,\g)$, then the sequence
$\frac1N\sum_{n=0}^{N-1}\big(W_\phi^nf\big)(x)$ converges in $\g$ to $(P_\phi f)(x)$
for $\mu_X$-almost every $x\in X$.
\end{enumerate}
\end{Lemma}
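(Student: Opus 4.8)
The plan is to deduce (a) from the mean ergodic theorem, (b) from the pointwise (Birkhoff) ergodic theorem for $T_\phi$ after lifting $f$ to a function on $X\times G$, and (c) by combining the first two parts.

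Part (a) would follow at once from von Neumann's mean ergodic theorem: by Lemma~\ref{lemma_W} the operator $W_\phi$ is unitary in the Hilbert space $\ltwo(X,\g)$, so $\frac1N\sum_{n=0}^{N-1}W_\phi^n$ converges strongly to the orthogonal projection onto $\ker(1-W_\phi)$, which is exactly $P_\phi$.

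For part (b) the key step is to realise the twisted Ces\`aro average on $X$ as a genuine Birkhoff average for the measure-preserving map $T_\phi$ on $X\times G$. Given $f\in\lone(X,\g)$, I would set $\widetilde f(x,g):=\Ad_gf(x)$; this defines a measurable function $X\times G\to\g$ with $\|\widetilde f(x,g)\|_\g=\|f(x)\|_\g$ (because each $\Ad_g$ preserves $\|\cdot\|_\g$), hence $\widetilde f\in\lone(X\times G,\g)$. Using \eqref{eq_T^n}, \eqref{eq_W^n} and the homomorphism property of $\Ad$ one checks that
$$
\widetilde f\big(T_\phi^n(x,g)\big)
=\Ad_{g\;\!\phi^{(n)}(x)}(f\circ F_n)(x)
=\Ad_g\big(W_\phi^nf\big)(x),
$$
so that $\frac1N\sum_{n=0}^{N-1}\widetilde f\big(T_\phi^n(x,g)\big)=\Ad_g\big(\frac1N\sum_{n=0}^{N-1}(W_\phi^nf)(x)\big)$. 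Since $\g$ is finite-dimensional, Birkhoff's theorem (applied coordinatewise in a basis of $\g$) gives convergence of the left-hand side for $(\mu_X\otimes\mu_G)$-almost every $(x,g)$; by Fubini, for $\mu_X$-almost every $x$ this holds for $\mu_G$-almost every $g$, and applying the linear isomorphism $\Ad_{g^{-1}}=(\Ad_g)^{-1}$ of $\g$ to one such $g$ yields convergence of $\frac1N\sum_{n=0}^{N-1}(W_\phi^nf)(x)$ for $\mu_X$-almost every $x$.

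For part (c), since $\mu_X$ is a probability measure one has $\ltwo(X,\g)\subset\lone(X,\g)$, so for $f\in\ltwo(X,\g)$ part (b) provides an a.e.\ pointwise limit $h$ of the sequence $\frac1N\sum_{n=0}^{N-1}W_\phi^nf$, while part (a) gives $\ltwo$-convergence of the same sequence to $P_\phi f$ and hence a.e.\ convergence to $P_\phi f$ along a subsequence; comparing the two limits forces $h=P_\phi f$, which is (c). The one step that needs care is the lift in (b): there is no off-the-shelf scalar ergodic theorem for the twisted operator $W_\phi$ directly on $X$, but passing to $\widetilde f$ on $X\times G$ turns the averages into Birkhoff averages of an $\lone$ function under the measure-preserving map $T_\phi$, and one should keep in mind in the Fubini step that the fibre one might naively use (such as $g=e_G$) can be $\mu_G$-null, which is precisely why the argument runs through an arbitrary $g$ in a set of full $\mu_G$-measure.
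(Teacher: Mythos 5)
Your proof is correct and follows essentially the same route as the paper: part (a) by von Neumann's mean ergodic theorem for the unitary $W_\phi$; part (b) by lifting $f$ to $\widetilde f(x,g)=\Ad_g f(x)$ on $X\times G$, recognising the twisted Ces\`aro sum as a Birkhoff average for $T_\phi$, and then using a Fubini-type argument to pick a full-measure $g$ and undo $\Ad_g$; part (c) by reconciling the $\ltwo$-limit from (a) with the a.e.\ limit from (b). The only cosmetic differences are that the paper invokes a Banach-valued pointwise ergodic theorem rather than applying scalar Birkhoff coordinatewise, and in (c) it identifies the limits via convergence in measure rather than your subsequence argument; both variants are standard and equivalent here.
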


\begin{proof}
(a) We know from Lemma \ref{lemma_W} that $W_\phi$ is a unitary operator in
$\ltwo(X,\g)$. Therefore, von Neumann's ergodic theorem implies that for each
$f\in\ltwo(X,\g)$ the sequence $\frac1N\sum_{n=0}^{N-1}W_\phi^nf$ converges in
$\ltwo(X,\g)$ to $P_\phi f$.

(b) Let $f\in\lone(X,\g)$ and let $\widetilde f:X\times G\to\g$ be given by
$$
\widetilde f(x,g):=\Ad_gf(x),\quad g\in G,~\hbox{$\mu_X$-almost every $x\in X$.}
$$
Then, \eqref{eq_T^n} and \eqref{eq_W^n} imply for every $g\in G$ and $\mu_X$-almost
every $x\in X$
\begin{align}
\widetilde f\big(T_\phi^n(x,g)\big)
&=\widetilde f\big(F_n(x),g\;\!\phi^{(n)}(x)\big)\nonumber\\
&=\Ad_{g\;\!\phi^{(n)}(x)}f\big(F_n(x)\big)\nonumber\\
&=\Ad_g\Ad_{\phi^{(n)}(x)}(f\circ F_n)(x)\nonumber\\
&=\Ad_g\big(W_\phi^nf\big)(x).\label{eq_W_and_tilde}
\end{align}
Moreover, we have $\widetilde f\in\lone(X\times G,\g)$ because \eqref{eq_invariance}
and Tonnelli's theorem imply that
\begin{align*}
\big\|\widetilde f\;\!\big\|_{\lone(X\times G,\g)}
&=\int_{X\times G}\d(\mu_X\otimes\mu_G)(x,g)\,\big\|\Ad_gf(x)\big\|_\g\\
&=\int_G\d\mu_G(g)\int_X\d\mu_X(x)\,\|f(x)\|_\g\\
&=\|f\|_{\lone(X,\g)}.
\end{align*}
So, we can apply Birkhoff's pointwise ergodic theorem for Banach-valued functions
\cite[Thm.~4.2.1]{Kre85} to obtain that the sequence
$$
\frac1N\sum_{n=0}^{N-1}\widetilde f\big(T_\phi^n(x,g)\big)
=\Ad_g\left(\frac1N\sum_{n=0}^{N-1}\big(W_\phi^nf\big)(x)\right),\quad N\in\N^*,
$$
converges in $\g$ for $(\mu_X\otimes\mu_G)$-almost every $(x,g)\in X\times G$.
Therefore, there exists $g\in G$ such that
$\Ad_g\big(\frac1N\sum_{n=0}^{N-1}\big(W_\phi^nf\big)(x)\big)$ converges in $\g$ for
$\mu_X$-almost every $x\in X$, and thus
$\frac1N\sum_{n=0}^{N-1}\big(W_\phi^nf\big)(x)$ converges in $\g$ for $\mu_X$-almost
every $x\in X$.

(c) Let $f\in\ltwo(X,\g)$. Then, we know from point (a) that the sequence
$\frac1N\sum_{n=0}^{N-1}W_\phi^nf$ converges in $\ltwo(X,\g)$ to $P_\phi f$. Thus,
$\frac1N\sum_{n=0}^{N-1}W_\phi^nf$ converges in measure $\mu_X$ to $P_\phi f$. On
another hand, since $\ltwo(X,\g)\subset\lone(X,\g)$, we know from point (b) that
$\frac1N\sum_{n=0}^{N-1}\big(W_\phi^nf\big)(x)$ converges in $\g$ for $\mu_X$-almost
every $x\in X$. Therefore, it follows from the uniqueness of the limit under different
modes of convergence \cite[Prop.~1.5.7]{Tao11} that
$\frac1N\sum_{n=0}^{N-1}\big(W_\phi^nf\big)(x)$ converges in $\g$ to
$(P_\phi f)(x)$ for $\mu_X$-almost every $x\in X$.
\end{proof}

\begin{Remark}\label{remark_Pf}
Since $P_\phi\in\B\big(\ltwo(X,\g)\big)$ is the orthogonal projection onto
$\ker(1-W)$, the function $P_\phi f\in\ltwo(X,\g)$ of Lemma \ref{lemma_P_phi}(a) satisfies
$W_\phi^n(P_\phi f)=P_\phi f$ for each $n\in\Z$, which means that $P_\phi f$ is
invariant under the action of the unitary operator $W_\phi$. Due to \eqref{eq_W^n},
this implies that
$$
\Ad_{\phi^{(n)}(x)}\big(P_\phi f\circ F_n\big)(x)=(P_\phi f)(x)
$$
for every $n\in\Z$ and $\mu_X$-almost every $x\in X$. It follows from
\eqref{eq_invariance} that the function $\rho_{\phi,f}\in\ltwo(X,\mu_X)$ given by
$\rho_{\phi,f}(\;\!\cdot\;\!):=\|(P_\phi f)(\;\!\cdot\;\!)\|_\g$ satisfies for
$\mu_X$-almost every $x\in X$
$$
\big(\rho_{\phi,f}\circ F_1\big)(x)
=\big\|\Ad_{\phi(x)}\big(P_\phi f\circ F_1\big)(x)\big\|_\g
=\rho_{\phi,f}(x).
$$
In consequence, if $F_1$ is ergodic, then $\rho_{\phi,f}(x)$ is constant for
$\mu_X$-almost every $x\in X$. This fact (noted by K. Fr{\c{a}}czek in
\cite[Lemma~2.1]{Fra00_2}) will be useful in Section \ref{section_SU(2)} when we will
discuss the case of cocycles $\phi$ taking values in the group $\SU(2)$.
\end{Remark}

We now begin to apply the theory of Section \ref{section_commutators} to the operator
$U_{\phi,\pi,j}$ in the Hilbert space $\H^{(\pi)}_j$. We start by defining an
appropriate self-adjoint operator $A$ in $\H^{(\pi)}_j$ and give conditions so that
$U_{\phi,\pi,j}\in C^1(A)$. We write $\B(\C^{d_\pi})$ for the set of
$d_\pi\times d_\pi$ complex matrices equipped with the operator norm and we write
$\linf\big(X,\B(\C^{d_\pi})\big)$ for the Banach space of measurable functions
$f:X\to\B(\C^{d_\pi})$ with norm
$$
\|f\|_{\linf(X,\B(\C^{d_\pi}))}:=\esssup_{x\in X}\|f(x)\|_{\B(\C^{d_\pi})}.
$$

\begin{Lemma}\label{lemma_A}
The operator
$$
A\sum_{k=1}^{d_\pi}\varphi_k\otimes\pi_{jk}
:=\sum_{k=1}^{d_\pi}H\varphi_k\otimes\pi_{jk},
\quad\varphi_k\in C^1(X),
$$
is essentially self-adjoint in $\H^{(\pi)}_j$, and its closure (which we denote by the
same symbol) has domain
$$
\dom(A)=\bigoplus_{k=1}^{d_\pi}\dom(H)\otimes\{\pi_{jk}\}.
$$
Furthermore, if $\L_Y(\pi\circ\phi)\in\linf\big(X,\B(\C^{d_\pi})\big)$, then
$U_{\phi,\pi,j}\in C^1(A)$ with
$$
\big[A,U_{\phi,\pi,j}\big]=iM_{\pi\circ\phi}\;\!U_{\phi,\pi,j},
$$
where $M_{\pi\circ\phi}$ is the bounded matrix-valued multiplication operator in
$\H^{(\pi)}_j$ given by
$$
M_{\pi\circ\phi}\sum_{k=1}^{d_\pi}\varphi_k\otimes\pi_{jk}
:=\sum_{k,\ell=1}^{d_\pi}\big(\L_Y(\pi\circ\phi)\cdot(\pi\circ\phi)^{-1}\big)_{k\ell}
\;\!\varphi_\ell\otimes\pi_{jk},\quad\varphi_k\in\ltwo(X,\mu_X).
$$
\end{Lemma}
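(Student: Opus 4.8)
The plan is to treat the two assertions separately. For the essential self-adjointness, I would exploit that, via the orthogonal identification $\H^{(\pi)}_j\cong\ltwo(X,\mu_X)^{\oplus d_\pi}$ sending $\sum_k\varphi_k\otimes\pi_{jk}$ to $(\varphi_1,\dots,\varphi_{d_\pi})$ (under which, by \eqref{eq_orthogonality}, the scalar product is merely rescaled by the constant $d_\pi^{-1}$), the operator $A$ is carried to the diagonal operator $H^{\oplus d_\pi}$ with initial domain $C^1(X)^{\oplus d_\pi}$. Since $H$ was shown above, via Nelson's criterion, to be essentially self-adjoint on $C^1(X)$ with closure having domain $\dom(H)$, and since a finite direct sum of essentially self-adjoint operators is essentially self-adjoint with closure equal to the direct sum of the closures, it follows that $A$ is essentially self-adjoint on $\bigoplus_k C^1(X)\otimes\{\pi_{jk}\}$ and that $\dom(A)=\bigoplus_k\dom(H)\otimes\{\pi_{jk}\}$. (Alternatively one applies Nelson's criterion directly to the strongly continuous unitary group $\sum_k\varphi_k\otimes\pi_{jk}\mapsto\sum_k(\varphi_k\circ F_{-t})\otimes\pi_{jk}$ generated by $A$, which leaves the dense set $\bigoplus_k C^1(X)\otimes\{\pi_{jk}\}$ invariant, and reads off the domain of the generator directly.)

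For the statement on $U_{\phi,\pi,j}\in C^1(A)$, I would use the quadratic-form characterisation of $C^1(A)$ recalled in Section~\ref{section_commutators}: it suffices to show that the form $\varphi\mapsto\langle\varphi,U_{\phi,\pi,j}A\varphi\rangle-\langle A\varphi,U_{\phi,\pi,j}\varphi\rangle$ on $\dom(A)$ is bounded for the topology induced by $\H^{(\pi)}_j$, and since $\bigoplus_k C^1(X)\otimes\{\pi_{jk}\}$ is a core for $A$ and this form is a priori continuous for the graph topology, it is enough to evaluate it on $\psi=\sum_k\varphi_k\otimes\pi_{jk}$ with $\varphi_k\in C^1(X)$. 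Substituting $H\varphi_k=i\L_Y\varphi_k$ and the explicit formula for $U_{\phi,\pi,j}$, and abbreviating $\Psi:=\pi\circ\phi$, the form becomes $i\,d_\pi^{-1}\sum_{k,\ell}\big(\langle\varphi_\ell,((\L_Y\varphi_k)\circ F_1)\Psi_{\ell k}\rangle_{\ltwo(X,\mu_X)}+\langle\L_Y\varphi_\ell,(\varphi_k\circ F_1)\Psi_{\ell k}\rangle_{\ltwo(X,\mu_X)}\big)$. The key step is an integration by parts: since $\{F_t\}_{t\in\R}$ preserves $\mu_X$ one has $\int_X\L_Y(\overline{f}g)\,\d\mu_X=0$, hence $\langle\L_Y\varphi_\ell,h\rangle_{\ltwo(X,\mu_X)}=-\langle\varphi_\ell,\L_Y h\rangle_{\ltwo(X,\mu_X)}$ for $h=(\varphi_k\circ F_1)\Psi_{\ell k}$; combining this with the Leibniz rule, the identity $\L_Y(\varphi_k\circ F_1)=(\L_Y\varphi_k)\circ F_1$ and the trivial identity $(\L_Y\Psi)\Psi^{-1}\Psi=\L_Y\Psi$, all terms involving $\L_Y\varphi_k$ cancel and the form collapses to $-i\,d_\pi^{-1}\sum_{k,\ell}\langle\varphi_\ell,(\varphi_k\circ F_1)(\L_Y\Psi)_{\ell k}\rangle_{\ltwo(X,\mu_X)}=-i\langle\psi,M_{\pi\circ\phi}U_{\phi,\pi,j}\psi\rangle$, the last equality being just the definitions of $M_{\pi\circ\phi}$ and $U_{\phi,\pi,j}$. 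Since $\Psi(x)\in\U(d_\pi)$, the matrix-valued multiplication operator $M_{\pi\circ\phi}$ is bounded with $\|M_{\pi\circ\phi}\|\le\|\L_Y(\pi\circ\phi)\|_{\linf(X,\B(\C^{d_\pi}))}$; hence $M_{\pi\circ\phi}U_{\phi,\pi,j}\in\B(\H^{(\pi)}_j)$, the form is bounded on the core, and extending the identity by density in the graph topology one obtains $U_{\phi,\pi,j}\in C^1(A)$ with $[U_{\phi,\pi,j},A]=-iM_{\pi\circ\phi}U_{\phi,\pi,j}$, i.e. $[A,U_{\phi,\pi,j}]=iM_{\pi\circ\phi}U_{\phi,\pi,j}$.

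The chief technical point is regularity bookkeeping: one must make sense of $\L_Y(\pi\circ\phi)$ and justify the integration-by-parts/Leibniz steps for a cocycle $\phi$ with $\L_Y(\pi\circ\phi)\in\linf(X,\B(\C^{d_\pi}))$ but not necessarily $C^1$; when $\phi\in C^1(X,G)$ this is automatic, since then $\pi\circ\phi\in C^1(X,\B(\C^{d_\pi}))$ and $\L_Y(\pi\circ\phi)$, being continuous on the compact manifold $X$, is bounded, and the rules invoked above are the classical ones. Keeping the $d_\pi\times d_\pi$ matrix indices aligned, so that the contraction $\sum_\ell(\L_Y\Psi\cdot\Psi^{-1})_{k\ell}\Psi_{\ell p}=(\L_Y\Psi)_{kp}$ matches precisely the definition of $M_{\pi\circ\phi}$, is the only other place requiring care. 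As an alternative to the form computation, one could compute $\e^{-itA}U_{\phi,\pi,j}\e^{itA}\psi=\sum_{k,\ell}(\varphi_k\circ F_1)(\Psi\circ F_t)_{\ell k}\otimes\pi_{j\ell}$ on the core and show that $t\mapsto\e^{-itA}U_{\phi,\pi,j}\e^{itA}$ is strongly $C^1$, using that the difference quotients $\tfrac1t\big((\Psi\circ F_t)(x)-\Psi(x)\big)=\tfrac1t\int_0^t(\L_Y\Psi)(F_s(x))\,\d s$ are bounded by $\|\L_Y(\pi\circ\phi)\|_{\linf(X,\B(\C^{d_\pi}))}$ uniformly in $t$ and converge pointwise to $(\L_Y\Psi)(x)$; this reduces the differentiability to the same regularity input and again yields the commutator formula directly.
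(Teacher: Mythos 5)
Your argument is correct, and it is substantially more explicit than the paper's, which at this point gives no self-contained proof but merely refers to Lemma~3.2 of \cite{Tie15_2} (with the coefficients $a_k$ set to $1$). Your route is the natural one that the cited lemma presumably follows: under the identification $\H^{(\pi)}_j\cong\ltwo(X,\mu_X)^{\oplus d_\pi}$ supplied by \eqref{eq_orthogonality}, $A$ becomes the block-diagonal operator $H^{\oplus d_\pi}$ and essential self-adjointness with the stated domain is immediate; then the sesquilinear form
$\langle\psi,U_{\phi,\pi,j}A\psi\rangle-\langle A\psi,U_{\phi,\pi,j}\psi\rangle$
is evaluated on the core $\bigoplus_k C^1(X)\otimes\{\pi_{jk}\}$, and the combination of $\int_X\L_Y(\overline{f}g)\,\d\mu_X=0$ (flow-invariance of $\mu_X$), the Leibniz rule, and $\L_Y(\varphi_k\circ F_1)=(\L_Y\varphi_k)\circ F_1$ makes the $\L_Y\varphi_k$ terms cancel, leaving exactly $-i\langle\psi,M_{\pi\circ\phi}U_{\phi,\pi,j}\psi\rangle$; boundedness of $M_{\pi\circ\phi}$ under $\L_Y(\pi\circ\phi)\in\linf\big(X,\B(\C^{d_\pi})\big)$ follows since $(\pi\circ\phi)(x)\in\U(d_\pi)$, and the standard density argument in the graph topology yields $U_{\phi,\pi,j}\in C^1(A)$ with $[A,U_{\phi,\pi,j}]=iM_{\pi\circ\phi}U_{\phi,\pi,j}$. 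You also correctly identify the only delicate point, namely that the integration-by-parts and Leibniz steps presuppose a pointwise notion of $\L_Y(\pi\circ\phi)$; when $\phi\in C^1(X,G)$ (the case actually used throughout the paper) this is automatic, and your alternative via the strong derivative of $t\mapsto\e^{-itA}U_{\phi,\pi,j}\e^{itA}$, using the uniformly bounded difference quotients $\tfrac1t\int_0^t(\L_Y\Psi)\circ F_s\,\d s$, gives a clean way to reach the same commutator formula under the mere $\linf$ hypothesis. I see no gap.
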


\begin{proof}
The claims can be shown as in \cite[Lemma~3.2]{Tie15_2} (just put all the coefficients
$a_k$ equal to $1$ in the analogue of $M_{\pi\circ\phi}$ in \cite{Tie15_2}).
\end{proof}

In the sequel, we write $T_xX$ for the tangent space of $X$ at $x\in X$, $\g_\pi$ for
the Lie algebra of $\pi(G)\subset\U(d_\pi)$ equipped with the ($\Ad$-invariant)
operator norm $\|\cdot\|_{\B(\C^{d_\pi})}$, $T_gG$ for the tangent space of $G$ at
$g\in G$, $T_{\pi(g)}\pi(G)$ for the tangent space of $\pi(G)$ at $\pi(g)\in\pi(G)$,
and
$$
(\d\pi)_g:T_gG\to T_{\pi(g)}\pi(G)
$$
for the differential of the map $\pi:G\to\pi(G)$ at $g\in G$. We recall from
\cite[Sec.~2.1.7-2.1.8]{BLU07} that, due to the standard isomorphisms
$T_{e_G}G\simeq\g$ and $T_{\pi(e_G)}\pi(G)\simeq\g_\pi$, the differential
$(\d\pi)_{e_G}$ induces a $\R$-linear (and thus continuous) map from $\g$ to $\g_\pi$
which we denote by the same symbol, that is,
$$
(\d\pi)_{e_G}:\g\to\g_\pi.
$$
Finally, we let $\ltwo(X,\g_\pi)$ be the Banach space of measurable functions
$f:X\to\g_\pi$ with norm
$$
\|f\|_{\ltwo(X,\g_\pi)}
:=\int_x\d\mu_X(x)\,\|f(x)\|_{\B(\C^{d_\pi})}^2,
$$
we define the operator $W_{\pi\circ\phi}:\ltwo(X,\g_\pi)\to\ltwo(X,\g_\pi)$ by
$$
\big(W_{\pi\circ\phi} f\big)(x):=\Ad_{(\pi\circ\phi)(x)}(f\circ F_1)(x),
\quad f\in\ltwo(X,\g_\pi),~\hbox{$\mu_X$-almost every $x\in X$,}
$$
and we note from Lemma \ref{lemma_W} (applied with $\phi$ replaced by $\pi\circ\phi$)
that $W_{\pi\circ\phi}$ is unitary in $\ltwo(X,\g_\pi)$, with iterates
$$
\big(W_{\pi\circ\phi}^nf\big)(x):=\Ad_{(\pi\circ\phi^{(n)})(x)}(f\circ F_n)(x),
\quad n\in\Z,~f\in\ltwo(X,\g_\pi),~\hbox{$\mu_X$-almost every $x\in X$.}
$$

\begin{Lemma}\label{lemma_dif_pi}
Assume that
$$
(\L_Y\phi)(x)=\frac\d{\d t}\Big|_{t=0}\;\!\phi\big(F_t(x)\big)\in T_{\phi(x)}G
$$
exists for $\mu_X$-almost every $x\in X$ and that
$M_\phi:=\L_Y\phi\cdot\phi^{-1}\in\ltwo(X,\g)$. Then, we have for $\mu_X$-almost every
$x\in X$
\begin{equation}\label{eq_d_pi_PM}
\lim_{N\to\infty}\left\|\frac1N\sum_{n=0}^{N-1}\big(W_{\pi\circ\phi}^n
M_{\pi\circ\phi}\big)(x)-(\d\pi)_{e_G}\big((P_\phi M_\phi)(x)\big)\right\|
_{\B(\C^{d_\pi})}=0.
\end{equation}
\end{Lemma}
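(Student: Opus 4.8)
\medskip\noindent\textbf{Proof sketch.}\enspace
The analytic content of the statement is entirely contained in Lemma~\ref{lemma_P_phi}(c); the plan is to reduce to it by exhibiting the pair $(W_{\pi\circ\phi},M_{\pi\circ\phi})$ as the image of the pair $(W_\phi,M_\phi)$ under the continuous $\R$-linear map $(\d\pi)_{e_G}:\g\to\g_\pi$. Concretely, I would first establish the two identities
$$
M_{\pi\circ\phi}(x)=(\d\pi)_{e_G}\big(M_\phi(x)\big)
\quad\hbox{for $\mu_X$-a.e.\ $x\in X$}
$$
and
$$
(\d\pi)_{e_G}\circ\Ad_g=\Ad_{\pi(g)}\circ(\d\pi)_{e_G}\colon\g\to\g_\pi
\quad\hbox{for every $g\in G$,}
$$
and then transport Lemma~\ref{lemma_P_phi}(c) through $(\d\pi)_{e_G}$.

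\emph{The two identities.} For the first one I would use the chain rule and the homomorphism property of $\pi$. Wherever $(\L_Y\phi)(x)$ exists, the chain rule applied to $t\mapsto\pi\big(\phi(F_t(x))\big)$ gives $\L_Y(\pi\circ\phi)(x)=(\d\pi)_{\phi(x)}\big((\L_Y\phi)(x)\big)\in T_{\pi(\phi(x))}\pi(G)$. Since $\pi$ is a group homomorphism, composition with $\pi$ intertwines the right translations in $G$ with the right translations in $\pi(G)$; differentiating this relation at the point $g=\phi(x)$ and applying it to $(\L_Y\phi)(x)$ yields
$$
M_{\pi\circ\phi}(x)
=(\d\pi)_{\phi(x)}\big((\L_Y\phi)(x)\big)\cdot\pi\big(\phi(x)\big)^{-1}
=(\d\pi)_{e_G}\Big((\L_Y\phi)(x)\cdot\phi(x)^{-1}\Big)
=(\d\pi)_{e_G}\big(M_\phi(x)\big),
$$
which is defined for $\mu_X$-a.e.\ $x$ since $M_\phi\in\ltwo(X,\g)$. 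The second identity is obtained in the same way by composing $\pi$ with the conjugation $h\mapsto ghg^{-1}$, which $\pi$ intertwines with $h\mapsto\pi(g)\,h\,\pi(g)^{-1}$, and differentiating at $h=e_G$ (using $T_{e_G}G\simeq\g$ and $T_{\pi(e_G)}\pi(G)\simeq\g_\pi$); in the ambient matrix picture it is just $(\d\pi)_{e_G}(gZg^{-1})=\pi(g)\,(\d\pi)_{e_G}(Z)\,\pi(g)^{-1}$.

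\emph{Transport and conclusion.} Using the homomorphism property once more, in the form $(\pi\circ\phi)^{(n)}=\pi\circ\phi^{(n)}$, together with the formula~\eqref{eq_W^n} for the iterates of $W_\phi$ and $W_{\pi\circ\phi}$ and the two identities above, I get for every $n\in\Z$ and $\mu_X$-a.e.\ $x$
$$
\big(W_{\pi\circ\phi}^nM_{\pi\circ\phi}\big)(x)
=\Ad_{(\pi\circ\phi^{(n)})(x)}\big((\d\pi)_{e_G}(M_\phi(F_n(x)))\big)
=(\d\pi)_{e_G}\Big(\Ad_{\phi^{(n)}(x)}\big(M_\phi(F_n(x))\big)\Big)
=(\d\pi)_{e_G}\big((W_\phi^nM_\phi)(x)\big).
$$
Averaging over $n\in\{0,\dots,N-1\}$ and using the $\R$-linearity of $(\d\pi)_{e_G}$,
$$
\frac1N\sum_{n=0}^{N-1}\big(W_{\pi\circ\phi}^nM_{\pi\circ\phi}\big)(x)
=(\d\pi)_{e_G}\!\left(\frac1N\sum_{n=0}^{N-1}\big(W_\phi^nM_\phi\big)(x)\right),
\quad N\in\N^*.
$$
By Lemma~\ref{lemma_P_phi}(c) applied with $f=M_\phi\in\ltwo(X,\g)$, the argument on the right converges in $\g$ to $(P_\phi M_\phi)(x)$ for $\mu_X$-a.e.\ $x$; since $(\d\pi)_{e_G}$ is continuous (being $\R$-linear between finite-dimensional spaces), the left-hand side converges to $(\d\pi)_{e_G}\big((P_\phi M_\phi)(x)\big)$ in $\g_\pi$, i.e.\ in the $\B(\C^{d_\pi})$-norm, which is precisely~\eqref{eq_d_pi_PM}.

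\emph{Main obstacle.} The only genuine work is the first identity $M_{\pi\circ\phi}=(\d\pi)_{e_G}\circ M_\phi$: it requires handling with care the canonical identifications $T_{e_G}G\simeq\g$ and $T_{\pi(e_G)}\pi(G)\simeq\g_\pi$, the almost-everywhere sense in which $\L_Y\phi$ is assumed to exist, and the fact that passing from the velocity of a curve at $\phi(x)$ to its right-translate at $e_G$ commutes with $\pi$ exactly because $\pi$ is a homomorphism. Once this and the elementary $\Ad$-intertwining are in place, the remainder is bookkeeping plus the single appeal to Lemma~\ref{lemma_P_phi}(c).
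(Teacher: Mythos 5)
Your proposal is correct and follows essentially the same route as the paper: both reduce the claim to Lemma~\ref{lemma_P_phi}(c) by showing that $(\d\pi)_{e_G}$ intertwines the pairs $(W_\phi,M_\phi)$ and $(W_{\pi\circ\phi},M_{\pi\circ\phi})$, via the identity $M_{\pi\circ\phi}=(\d\pi)_{e_G}\circ M_\phi$ and the relation $(\d\pi)_{e_G}\circ\Ad_g=\Ad_{\pi(g)}\circ(\d\pi)_{e_G}$, then use the continuity of $(\d\pi)_{e_G}$. The paper verifies the $\Ad$-intertwining inline by differentiating $t\mapsto\pi\big(\e^{t\Ad_gZ}\big)$ rather than stating it as an abstract lemma, but the content is identical.
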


\begin{proof}
We have for every $n\in\Z$ and $\mu_X$-almost every $x\in X$
\begin{align*}
(\d\pi)_{e_g}\left(\big(W_\phi^nM_\phi\big)(x)\right)
&=\frac\d{\d t}\Big|_{t=0}\;\!
\pi\big(\e^{t\Ad_{\phi^{(n)}(x)}(M_\phi\circ F_n)(x)}\big)\\
&=\frac\d{\d t}\Big|_{t=0}\;\!
\pi\left(\Ad_{\phi^{(n)}(x)}\e^{t(M_\phi\circ F_n)(x)}\right)\\
&=\Ad_{(\pi\circ\phi^{(n)})(x)}\frac\d{\d t}\Big|_{t=0}
\pi\big(\e^{t(M_\phi\circ F_n)(x)}\big)\\
&=\Ad_{(\pi\circ\phi^{(n)})(x)}(\d\pi)_{e_G}\big(\big(M_\phi\circ F_n\big)(x)\big)
\end{align*}
and
\begin{align}
(\d\pi)_{e_g}\big(M_\phi(x)\big)
&=(\d\pi)_{e_g}\left(\left(\frac\d{\d t}\Big|_{t=0}\phi\big(F_t(x)\big)\right)
\phi(x)^{-1}\right)\nonumber\\
&=\frac\d{\d t}\Big|_{t=0}\;\!\pi\big(\phi\big(F_t(x)\big)\phi(x)^{-1}\big)\nonumber\\
&=\big(\L_Y(\pi\circ\phi)\big)(x)\cdot\big(\pi\circ\phi\big)(x)^{-1}\nonumber\\
&=M_{\pi\circ\phi}(x).\label{eq_d_M_phi}
\end{align}
These relations, together with the $\R$-linearity of $(\d\pi)_{e_g}$ and
\eqref{eq_W^n}, imply for every $N\in\N^*$ and $\mu_X$-almost every $x\in X$ that
$$
\frac1N\sum_{n=0}^{N-1}\big(W_{\pi\circ\phi}^nM_{\pi\circ\phi}\big)(x)
=\frac1N\sum_{n=0}^{N-1}(\d\pi)_{e_g}\left(\big(W_\phi^nM_\phi\big)(x)\right)
=(\d\pi)_{e_g}\left(\frac1N\sum_{n=0}^{N-1}\big(W_\phi^nM_\phi\big)(x)\right).
$$
Therefore, using the continuity of $(\d\pi)_{e_g}$ and Lemma \ref{lemma_P_phi}(c), we
obtain for $\mu_X$-almost every $x\in X$
\begin{align*}
&\lim_{N\to\infty}\left\|\frac1N\sum_{n=0}^{N-1}\big(W_{\pi\circ\phi}^n
M_{\pi\circ\phi}\big)(x)-(\d\pi)_{e_G}\big((P_\phi M_\phi)(x)\big)
\right\|_{\B(\C^{d_\pi})}\\
&=\lim_{N\to\infty}\left\|(\d\pi)_{e_g}\left(\frac1N\sum_{n=0}^{N-1}
\big(W_\phi^nM_\phi\big)(x)-\big(P_\phi M_\phi\big)(x)\right)
\right\|_{\B(\C^{d_\pi})}\\
&\le{\rm Const.}\lim_{N\to\infty}\left\|\frac1N\sum_{n=0}^{N-1}
\big(W_\phi^nM_\phi\big)(x)-\big(P_\phi M_\phi\big)(x)\right\|_\g\\
&=0.
\end{align*}
\end{proof}

\begin{Remark}\label{rem_P_pi_phi}
If $\L_Y\phi$ exists $\mu_X$-almost everywhere and $M_\phi\in\ltwo(X,\g)$, then
\eqref{eq_d_M_phi} and the continuity of $(\d\pi)_{e_g}$ imply that
\begin{align*}
\big\|M_{\pi\circ\phi}\big\|_{\ltwo(X,\g_\pi)}^2
&=\int_x\d\mu_X(x)\,\big\|(\d\pi)_{e_g}\big(M_\phi(x)\big)\big\|_\g^2\\
&\le{\rm Const.}\int_x\d\mu_X(x)\,\big\|M_\phi(x)\big\|_\g^2\\
&={\rm Const.}\;\!\big\|M_\phi\big\|_{\ltwo(X,\g)}^2\\
&<\infty.
\end{align*}
Therefore, an application of Lemma \ref{lemma_P_phi}(c) with $\phi$ replaced by
$\pi\circ\phi$ implies that the sequence
$\frac1N\sum_{n=0}^{N-1}\big(W_{\pi\circ\phi}^nM_{\pi\circ\phi}\big)(x)$ converges in
$\g_\pi$ to $\big(P_{\pi\circ\phi}M_{\pi\circ\phi}\big)(x)$ for $\mu_X$-almost every
$x\in X$. This, together with Lemma \ref{lemma_dif_pi}, implies that
$$
P_{\pi\circ\phi}M_{\pi\circ\phi}=(\d\pi)_{e_G}\big((P_\phi M_\phi)(\;\!\cdot\;\!)\big)
\quad\hbox{$\mu_X$-almost everywhere.}
$$
\end{Remark}

The functions $P_\phi M_\phi\in\ltwo(X,\g)$ and
$P_{\pi\circ\phi}M_{\pi\circ\phi}\in\ltwo(X,\g_\pi)$ of Lemma \ref{lemma_dif_pi} and
Remark \ref{rem_P_pi_phi} transform in a natural way under Lie group homomorphisms and
under the relation of $C^1$-cohomology:

\begin{Lemma}[Invariance of $P_\phi M_\phi$ and $P_{\pi\circ\phi}M_{\pi\circ\phi}$]
\label{lemma_inv_degrees}
Assume that $\phi\in C^1(X,G)$.
\begin{enumerate}
\item[(a)] If $\phi=h\circ\delta$ with $h:G'\to G$ a Lie group homomorphism
and $\delta\in C^1(X,G')$, then
$$
P_\phi M_\phi=(\d h)_{e_{G'}}\big((P_\delta M_\delta)(\;\!\cdot\;\!)\big)
\quad\hbox{$\mu_X$-almost everywhere.}
$$
\item[(b)] If $\zeta,\delta\in C^1(X,G)$ are such that
\begin{equation}\label{eq_coho_C1}
\phi(x)=\zeta(x)^{-1}\;\!\delta(x)\;\!(\zeta\circ F_1)(x)\quad\hbox{for each $x\in X$,}
\end{equation}
then $P_\delta M_\delta=\Ad_\zeta(P_\phi M_\phi)$ $\mu_X$-almost everywhere.
In particular,
\begin{equation}\label{eq_equal_norms}
\big\|\big(P_\delta M_\delta\big)(x)\big\|_\g
=\big\|\big(P_\phi M_\phi\big)(x)\big\|_\g
\quad\hbox{for $\mu_X$-almost every $x\in X$.}
\end{equation}
\item[(c)] If $\phi=h\circ\delta$ with $h:G'\to G$ a Lie group homomorphism
and $\delta\in C^1(X,G')$, then
$$
P_{\pi\circ\phi}M_{\pi\circ\phi}
=\big(\d(\pi\circ h)\big)_{e_{G'}}\big((P_\delta M_\delta)(\;\!\cdot\;\!)\big)
\quad\hbox{$\mu_X$-almost everywhere.}
$$
\item[(d)] If $\zeta,\delta\in C^1(X,G)$ are such that
$$
\phi(x)=\zeta(x)^{-1}\;\!\delta(x)\;\!(\zeta\circ F_1)(x)\quad\hbox{for each $x\in X$,}
$$
then
$
P_{\pi\circ\delta}M_{\pi\circ\delta}
=\Ad_{\pi\circ\zeta}(P_{\pi\circ\phi}M_{\pi\circ\phi})
$
$\mu_X$-almost everywhere. In particular,
$$
\big\|\big(P_{\pi\circ\delta}M_{\pi\circ\delta}\big)(x)\big\|_{\B(\C^{d_\pi})}
=\big\|\big(P_{\pi\circ\phi}M_{\pi\circ\phi}\big)(x)\big\|_{\B(\C^{d_\pi})}
\quad\hbox{for $\mu_X$-almost every $x\in X$.}
$$
\end{enumerate}
\end{Lemma}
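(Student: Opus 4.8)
The plan is to obtain (a) by the argument already used for Lemma~\ref{lemma_dif_pi} and Remark~\ref{rem_P_pi_phi}, with the irreducible representation $\pi$ replaced by an arbitrary $C^1$ Lie group homomorphism $h$; to prove (b) by a direct computation with the unitary operator $W_\phi$ and the right logarithmic derivative $M_\psi=(\L_Y\psi)\psi^{-1}$; and then to deduce (c) from (a) and (d) from (b). For (a) one records three facts about a $C^1$ Lie group homomorphism $h\colon G'\to G$ with $\phi=h\circ\delta$: the cocycle identity $\phi^{(n)}=h\circ\delta^{(n)}$ for all $n\in\Z$ (immediate from the homomorphism property); the intertwining relation $(\d h)_{e_{G'}}\circ\Ad_{g'}=\Ad_{h(g')}\circ(\d h)_{e_{G'}}$ for $g'\in G'$ (obtained by differentiating $h(g'\e^{tZ}(g')^{-1})=h(g')\e^{t(\d h)_{e_{G'}}(Z)}h(g')^{-1}$ at $t=0$); and the pointwise identity $M_\phi=(\d h)_{e_{G'}}\circ M_\delta$ (obtained by differentiating $\phi(F_t(x))\phi(x)^{-1}=h\big(\delta(F_t(x))\delta(x)^{-1}\big)$ at $t=0$, the equality inside $h$ holding because $h$ is a homomorphism). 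Combining these with \eqref{eq_W^n} gives $(W_\phi^nM_\phi)(x)=(\d h)_{e_{G'}}\big((W_\delta^nM_\delta)(x)\big)$, so by $\R$-linearity of $(\d h)_{e_{G'}}$ one has $\frac1N\sum_{n=0}^{N-1}(W_\phi^nM_\phi)(x)=(\d h)_{e_{G'}}\big(\frac1N\sum_{n=0}^{N-1}(W_\delta^nM_\delta)(x)\big)$; letting $N\to\infty$, applying Lemma~\ref{lemma_P_phi}(c) on both sides (admissible since $M_\phi,M_\delta\in\ltwo$ because $\phi,\delta\in C^1$), and using the continuity of $(\d h)_{e_{G'}}$ then yields (a).

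For (b), I would rewrite \eqref{eq_coho_C1} as $\delta(x)=\zeta(x)\phi(x)(\zeta\circ F_1)(x)^{-1}$ and introduce the operator $\Xi$ on $\ltwo(X,\g)$ given by $(\Xi f)(x):=\Ad_{\zeta(x)}f(x)$, which is unitary by \eqref{eq_invariance}. The identity $\zeta(x)\phi(x)=\delta(x)(\zeta\circ F_1)(x)$ gives at once $W_\delta\Xi=\Xi W_\phi$, hence $W_\delta=\Xi W_\phi\Xi^{-1}$, so $\ker(1-W_\delta)=\Xi\ker(1-W_\phi)$ and therefore $P_\delta=\Xi P_\phi\Xi^{-1}$. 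It then suffices to show $\Xi^{-1}M_\delta-M_\phi=(1-W_\phi)g$ for some $g\in\ltwo(X,\g)$: since $P_\phi(1-W_\phi)=0$, this gives $P_\phi(\Xi^{-1}M_\delta)=P_\phi M_\phi$, whence $P_\delta M_\delta=\Xi P_\phi\Xi^{-1}M_\delta=\Xi(P_\phi M_\phi)$, i.e.\ $(P_\delta M_\delta)(x)=\Ad_{\zeta(x)}\big((P_\phi M_\phi)(x)\big)$, and \eqref{eq_equal_norms} follows from \eqref{eq_invariance}. To produce $g$ I would use the elementary calculus of the right logarithmic derivative, namely $M_{\psi_1\psi_2}=M_{\psi_1}+\Ad_{\psi_1}M_{\psi_2}$, $M_{\psi^{-1}}=-\Ad_{\psi^{-1}}M_\psi$, and $M_{\psi\circ F_1}=M_\psi\circ F_1$ (the last because $F_1\circ F_t=F_t\circ F_1$), to compute $M_\delta=M_\zeta+\Ad_\zeta M_\phi-\Ad_\delta(M_\zeta\circ F_1)$; with $g:=\Ad_{\zeta^{-1}}M_\zeta=-M_{\zeta^{-1}}\in\ltwo(X,\g)$ this rearranges to $\Xi^{-1}M_\delta=\Ad_{\zeta^{-1}}M_\delta=M_\phi+g-W_\phi g=M_\phi+(1-W_\phi)g$, as required.

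Finally, (c) is part (a) applied to the cocycle $\pi\circ\phi=(\pi\circ h)\circ\delta$ with the $C^\infty$ Lie group homomorphism $\pi\circ h\colon G'\to\pi(G)$ (so that $\g_\pi$ now plays the role of the target Lie algebra, and $M_{\pi\circ\phi}\in\ltwo$ since $\pi\circ\phi\in C^1(X,\pi(G))$), while (d) is part (b) applied to the cocycles $\pi\circ\phi$ and $\pi\circ\delta$, which are cohomologous with transfer function $\pi\circ\zeta$ (apply the homomorphism $\pi$ to \eqref{eq_coho_C1}); the norm identity again comes from the $\Ad$-invariance of $\|\cdot\|_{\B(\C^{d_\pi})}$ on $\g_\pi$. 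The one genuinely computational point is the identity $\Xi^{-1}M_\delta=M_\phi+(1-W_\phi)g$ in (b); once the logarithmic-derivative rules are set up this is a short manipulation, and everything else reduces to bookkeeping with the homomorphism properties of $h$ and $\pi$ and with the unitary $\Xi$.
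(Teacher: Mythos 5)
Your proof is correct. Part (a) is exactly the argument the paper gestures at (replay Lemma~\ref{lemma_dif_pi} with $\pi$ replaced by $h$, using $\phi^{(n)}=h\circ\delta^{(n)}$, the intertwining of $(\d h)_{e_{G'}}$ with $\Ad$, and $M_\phi=(\d h)_{e_{G'}}\circ M_\delta$). In (b) the underlying computation $M_\delta=M_\zeta+\Ad_\zeta M_\phi-\Ad_\delta(M_\zeta\circ F_1)$ is the same as the paper's, but you package the rest differently: you introduce the unitary $\Xi=\Ad_\zeta$ on $\ltwo(X,\g)$, prove $W_\delta=\Xi W_\phi\Xi^{-1}$ (hence $P_\delta=\Xi P_\phi\Xi^{-1}$), and then observe that the remaining discrepancy $\Xi^{-1}M_\delta-M_\phi=(1-W_\phi)g$ is a coboundary, killed by $P_\phi$. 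The paper instead applies its projection-intertwining identity $P_\phi\Ad_{\zeta^{-1}}=\Ad_{\zeta^{-1}}P_\delta$ term by term and disposes of the extra pieces by a shift-by-one Ces\`aro telescoping, namely $P_\delta(\Ad_\delta(M_\zeta\circ F_1))=P_\delta M_\zeta$. These are the same idea in disguise (a coboundary vanishes under Ces\`aro averaging), but your formulation via $\ker(1-W_\phi)^\perp\supset\operatorname{ran}(1-W_\phi)$ is somewhat cleaner and avoids the explicit index-shift. For (c) and (d), the paper routes through Remark~\ref{rem_P_pi_phi} and the chain rule, whereas you simply reapply (a) and (b) to the cocycles $\pi\circ\phi$, $\pi\circ\delta$ with homomorphism $\pi\circ h$ and transfer function $\pi\circ\zeta$; both work, and yours is slightly more uniform. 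The norm identities follow as you say from $\Ad$-invariance of the respective norms.
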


\begin{proof}
(a) Since $\phi\in C^1(X,G)$ and $\delta\in C^1(X,G')$, an application of Lemma \ref{lemma_P_phi} for $\phi$ and $\delta$ implies the existence of
the limits $P_\phi M_\phi$ and $P_\delta M_\delta$ $\mu_X$-almost everywhere.
Then, one shows that $P_\phi M_\phi=(\d h)_{e_G}\big((P_\delta M_\delta)(\;\!\cdot\;\!)\big)$
$\mu_X$-almost everywhere as in Lemma \ref{lemma_dif_pi} (just replace in Lemma
\ref{lemma_dif_pi} $G$ by $G'$, $\phi$ by $\delta$, $\pi$ by $h$, and the norm
$\|\cdot\|_{\B(\C^{d_\pi})}$ by a norm on the Lie algebra of $G'$).

(b)  Since $\phi,\delta\in C^1(X,G)$, an application of Lemma \ref{lemma_P_phi} for $\phi$
and $\delta$ implies the existence of the limits $P_\phi M_\phi$ and
$P_\delta M_\delta$ $\mu_X$-almost everywhere. Furthermore, the equation
\eqref{eq_coho_C1}, the product rule in Lie groups, and the equality
$\L_Y(\zeta\circ F_1)=(\L_Y\zeta)\circ F_1$ imply that
\begin{align*}
M_\phi
&=\L_Y\phi\cdot\phi^{-1}\\
&=-\zeta^{-1}\cdot\L_Y\zeta\cdot\zeta^{-1}\cdot\delta\cdot(\zeta\circ F_1)
\cdot\phi^{-1}+\zeta^{-1}\cdot\L_Y\delta\cdot(\zeta\circ F_1)\cdot\phi^{-1}
+\zeta^{-1}\cdot\delta\cdot\L_Y(\zeta\circ F_1)\cdot\phi^{-1}\\
&=-\Ad_{\zeta^{-1}}\big(M_\zeta-M_\delta-\Ad_\delta(M_\zeta\circ F_1)\big).
\end{align*}
This, together with the fact that
$P_\phi(\Ad_{\zeta^{-1}}f)=\Ad_{\zeta^{-1}}(P_\delta f)$ for any $f\in\ltwo(X,\g)$,
gives
$$
P_\phi M_\phi
=-\Ad_{\zeta^{-1}}\big(P_\delta M_\zeta-P_\delta M_\delta
-P_\delta\big(\Ad_\delta(M_\zeta\circ F_1)\big)\big).
$$
Now, a direct calculation implies that
\begin{align*}
P_\delta\big(\Ad_\delta(M_\zeta\circ F_1)\big)
&=\lim_{N\to\infty}\frac1N\sum_{n=0}^{N-1}\Ad_{\delta^{(n+1)}}(M_\zeta\circ F_{n+1})\\
&=\lim_{N\to\infty}\frac1N\sum_{n=0}^{N-1}\Ad_{\delta^{(n)}}(M_\zeta\circ F_n)\\
&=P_\delta M_\zeta.
\end{align*}
Thus $P_\phi M_\phi=\Ad_{\zeta^{-1}}P_\delta M_\delta$ $\mu_X$-almost everywhere, and
the claim is proved.

(c) Remark \ref{rem_P_pi_phi}, point (a), and the chain rule
$\big(\d(\pi\circ h)\big)_{e_{G'}}=(\d\pi)_{e_G}\circ(\d h)_{e_{G'}}$ imply that
$$
P_{\pi\circ\phi}M_{\pi\circ\phi}
=(\d\pi)_{e_G}\big((P_\phi M_\phi)(\;\!\cdot\;\!)\big)
=(\d\pi)_{e_G}\big((\d h)_{e_{G'}}\big((P_\delta M_\delta)(\;\!\cdot\;\!)\big)\big)
=\big(\d(\pi\circ h)\big)_{e_{G'}}\big((P_\delta M_\delta)(\;\!\cdot\;\!)\big)
$$
$\mu_X$-almost everywhere.

(d) Since $\phi,\delta\in C^1(X,G)$, an application of Remark \ref{rem_P_pi_phi} with
$\phi$ and $\delta$ implies that
$$
P_{\pi\circ\phi}M_{\pi\circ\phi}
=(\d\pi)_{e_G}\big((P_\phi M_\phi)(\;\!\cdot\;\!)\big)
\quad\hbox{and}\quad
P_{\pi\circ\delta}M_{\pi\circ\delta}
=(\d\pi)_{e_G}\big((P_\delta M_\delta)(\;\!\cdot\;\!)\big)
$$
$\mu_X$-almost everywhere. Therefore, it follows from point (b) that
$$
P_{\pi\circ\delta}M_{\pi\circ\delta}
=(\d\pi)_{e_G}\big(\Ad_\zeta(P_\phi M_\phi)(\;\!\cdot\;\!)\big)
=\Ad_{\pi\circ\zeta}(\d\pi)_{e_G}\big((P_\phi M_\phi)(\;\!\cdot\;\!)\big)
=\Ad_{\pi\circ\zeta}P_{\pi\circ\phi}M_{\pi\circ\phi}
$$
$\mu_X$-almost everywhere.
\end{proof}

Using Lemma \ref{lemma_dif_pi}, we can now prove that the strong limit
$$
D_{\phi,\pi}
:=\slim_{N\to\infty}\frac1N\big[A,\big(U_{\phi,\pi,j}\big)^N\big]
\big(U_{\phi,\pi,j}\big)^{-N}
$$ 
exists and is equal to the matrix-valued multiplication operator by the function
$i\;\!(\d\pi)_{e_G}\big((P_\phi M_\phi)(\;\!\cdot\;\!)\big)$ or equivalently the
function $iP_{\pi\circ\phi}M_{\pi\circ\phi}$ (this is the same due to Remark
\ref{rem_P_pi_phi}).

\begin{Lemma}[Existence of $D_{\phi,\pi}$]\label{lemma_D_phi_pi}
Assume that $\L_Y\phi$ exists $\mu_X$-almost everywhere, that $M_\phi\in\ltwo(X,\g)$,
and that $\L_Y(\pi\circ\phi)\in\linf\big(X,\B(\C^{d_\pi})\big)$. Then, the strong
limit
$$
D_{\phi,\pi}
=\slim_{N\to\infty}\frac1N\big[A,\big(U_{\phi,\pi,j}\big)^N\big]
\big(U_{\phi,\pi,j}\big)^{-N}
$$ 
exists and satisfies
$$
D_{\phi,\pi}\sum_{k=1}^{d_\pi}\varphi_k\otimes\pi_{jk}
=\sum_{k,\ell=1}^{d_\pi}i\;\!\big((\d\pi)_{e_G}
\big((P_\phi M_\phi)(\;\!\cdot\;\!)\big)\big)_{k\ell}
\;\!\varphi_\ell\otimes\pi_{jk},\quad\varphi_k\in\ltwo(X,\mu_X).
$$
\end{Lemma}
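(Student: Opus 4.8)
The plan is to compute the operator $\frac1N\big[A,(U_{\phi,\pi,j})^N\big](U_{\phi,\pi,j})^{-N}$ explicitly and recognise it as $i$ times a matrix-valued multiplication operator whose multiplier is the Ces\`aro average $\frac1N\sum_{n=0}^{N-1}\big(W_{\pi\circ\phi}^nM_{\pi\circ\phi}\big)(\;\!\cdot\;\!)$, so that Lemma~\ref{lemma_dif_pi} delivers both the existence of the strong limit and its value. Write $U:=U_{\phi,\pi,j}$. Since $\L_Y(\pi\circ\phi)\in\linf\big(X,\B(\C^{d_\pi})\big)$, Lemma~\ref{lemma_A} gives $U\in C^1(A)$ with $[A,U]=iM_{\pi\circ\phi}U$, where $M_{\pi\circ\phi}$ is the bounded matrix-valued multiplication operator of that lemma. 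As $U\in C^1(A)$, one has $U^N\in C^1(A)$ for every $N\in\Z$ \cite[Prop.~5.1.5]{ABG96} together with the Leibniz-type identity $[A,U^N]=\sum_{n=0}^{N-1}U^n[A,U]U^{N-1-n}$; multiplying by $U^{-N}$ on the right and using $[A,U]U^{-1}=iM_{\pi\circ\phi}$ (a computation as in the proof of Proposition~\ref{prop_Mourre}) gives
$$
\frac1N\big[A,U^N\big]U^{-N}=\frac iN\sum_{n=0}^{N-1}U^nM_{\pi\circ\phi}U^{-n},\quad N\in\N^*.
$$

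The key step is to identify $U^nM_{\pi\circ\phi}U^{-n}$ with the multiplication operator associated with the $\B(\C^{d_\pi})$-valued function $\big(W_{\pi\circ\phi}^nM_{\pi\circ\phi}\big)(\;\!\cdot\;\!)$. This is a direct computation with the tensor decomposition $\H^{(\pi)}_j=\bigoplus_{k=1}^{d_\pi}\ltwo(X,\mu_X)\otimes\{\pi_{jk}\}$: starting from the formula \eqref{eq_U_n} for $(U_{\phi,\pi,j})^n$, using the cocycle relation $\phi^{(n)}(x)\,\phi^{(-n)}\big(F_n(x)\big)=e_G$ (hence $\pi\big(\phi^{(n)}(x)\big)\,\pi\big(\phi^{(-n)}(F_n(x))\big)=I$) and the identity $\L_Y(\psi\circ F_n)=(\L_Y\psi)\circ F_n$, one finds that conjugating the multiplication by $\L_Y(\pi\circ\phi)\cdot(\pi\circ\phi)^{-1}$ by $U^n$ produces the multiplication by $\Ad_{(\pi\circ\phi^{(n)})(\;\!\cdot\;\!)}\big(\big(\L_Y(\pi\circ\phi)\cdot(\pi\circ\phi)^{-1}\big)\circ F_n\big)$, which is $\big(W_{\pi\circ\phi}^nM_{\pi\circ\phi}\big)(\;\!\cdot\;\!)$ by the defining formula \eqref{eq_W^n}. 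Hence $\frac1N[A,U^N]U^{-N}$ equals $i$ times the multiplication operator associated with $x\mapsto\frac1N\sum_{n=0}^{N-1}\big(W_{\pi\circ\phi}^nM_{\pi\circ\phi}\big)(x)$.

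To conclude, note that each of these multipliers is bounded in $\linf\big(X,\B(\C^{d_\pi})\big)$ by $\big\|\L_Y(\pi\circ\phi)\cdot(\pi\circ\phi)^{-1}\big\|_{\linf(X,\B(\C^{d_\pi}))}<\infty$, since $\Ad$ preserves the operator norm on $\g_\pi$ and $\pi\circ\phi$ is unitary-valued, and by \eqref{eq_d_pi_PM} in Lemma~\ref{lemma_dif_pi} it converges, as $N\to\infty$, to $(\d\pi)_{e_G}\big((P_\phi M_\phi)(x)\big)$ in $\B(\C^{d_\pi})$ for $\mu_X$-almost every $x\in X$. Since a uniformly bounded sequence of multiplication operators converging pointwise $\mu_X$-almost everywhere converges strongly (dominated convergence), the strong limit $D_{\phi,\pi}$ exists and equals $i$ times the multiplication operator by $(\d\pi)_{e_G}\big((P_\phi M_\phi)(\;\!\cdot\;\!)\big)$; expanding this in the basis $\{\pi_{jk}\}_{k=1}^{d_\pi}$ exactly as for $M_{\pi\circ\phi}$ in Lemma~\ref{lemma_A} yields the stated formula, which by Remark~\ref{rem_P_pi_phi} can also be written as $iP_{\pi\circ\phi}M_{\pi\circ\phi}$.

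I expect the main obstacle to be the identification $U^nM_{\pi\circ\phi}U^{-n}=$ ``multiplication by $\big(W_{\pi\circ\phi}^nM_{\pi\circ\phi}\big)(\;\!\cdot\;\!)$'': conceptually it merely says that conjugation by the Koopman operator implements the $\Ad$-action of $G$ along the cocycle $\phi^{(n)}$, but carrying it out rigorously requires careful bookkeeping of the cocycle identities for $\phi^{(n)}$ and of the placement of the matrix factors in the components $\ltwo(X,\mu_X)\otimes\{\pi_{jk}\}$. The passage from pointwise to strong convergence is routine but must be spelled out, as it is precisely what yields the existence of the strong limit rather than merely a weak-$*$ limit of the multipliers.
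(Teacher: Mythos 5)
Your proof is correct and follows essentially the same route as the paper's: compute $\frac1N[A,U^N]U^{-N}$ via Leibniz and $[A,U]U^{-1}=iM_{\pi\circ\phi}$, identify $U^nM_{\pi\circ\phi}U^{-n}$ with multiplication by $\bigl(W_{\pi\circ\phi}^nM_{\pi\circ\phi}\bigr)(\;\!\cdot\;\!)$, then combine the pointwise convergence from Lemma~\ref{lemma_dif_pi} with the uniform $\linf$ bound (via $\Ad$-invariance of the operator norm) and dominated convergence to pass from bounded multipliers to a strong operator limit. The only cosmetic difference is that the paper first establishes $(\d\pi)_{e_G}\bigl((P_\phi M_\phi)(\;\!\cdot\;\!)\bigr)\in\linf\bigl(X,\B(\C^{d_\pi})\bigr)$ before invoking dominated convergence, which is equivalent to your uniform bound on the Ces\`aro averages.
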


The assumptions of Lemma \ref{lemma_D_phi_pi} are satisfied for instance if
$\phi\in C^1(X,G)$.

\begin{proof}
We know from \eqref{eq_U_n} that
$$
\big(U_{\phi,\pi,j}\big)^n\sum_{k=1}^{d_\pi}\varphi_k\otimes\pi_{jk}
=\sum_{k,\ell=1}^{d_\pi}\big(\varphi_k\circ F_n\big)
\big(\pi_{\ell k}\circ\phi^{(n)}\big)\otimes\pi_{j\ell},
\quad n\in\Z,~\varphi_k\in\ltwo(X,\mu_X),
$$
and we know from Lemma \ref{lemma_A} that $U_{\phi,\pi,j}\in C^1(A)$ with
$[A,U_{\phi,\pi,j}]=iM_{\pi\circ\phi}U_{\phi,\pi,j}$. Therefore, we have for each
$N\in\N^*$ the equalities
\begin{align*}
\frac1N\big[A,\big(U_{\phi,\pi,j}\big)^N\big]\big(U_{\phi,\pi,j}\big)^{-N}
&=\frac1N\sum_{n=0}^{N-1}\big(U_{\phi,\pi,j}\big)^n\left(\big[A,U_{\phi,\pi,j}\big]
\big(U_{\phi,\pi,j}\big)^{-1}\right)\big(U_{\phi,\pi,j}\big)^{-n}\\
&=\frac1N\sum_{n=0}^{N-1}\big(U_{\phi,\pi,j}\big)^niM_{\pi\circ\phi}
\big(U_{\phi,\pi,j}\big)^{-n}\\
&=\frac iN\sum_{n=0}^{N-1}\big(\pi\circ\phi^{(n)}\big)
\big(M_{\pi\circ\phi}\circ F_n\big)\big(\pi\circ\phi^{(n)}\big)^{-1}\\
&=\frac iN\sum_{n=0}^{N-1}W_{\pi\circ\phi}^nM_{\pi\circ\phi},
\end{align*}
and the operators
$$
D_{\phi,\pi,N}:=\frac iN\sum_{n=0}^{N-1}W_{\pi\circ\phi}^nM_{\pi\circ\phi},
\quad N\in\N^*,
$$
are bounded matrix-valued multiplication operators in $\H^{(\pi)}_j$. On another hand,
Lemma \ref{lemma_dif_pi} implies that the sequence
$\frac1N\sum_{n=0}^{N-1}\big(W_{\pi\circ\phi}^nM_{\pi\circ\phi}\big)(x)$ converges in
$\B(\C^{d_\pi})$ to $(\d\pi)_{e_G}\big((P_\phi M_\phi)(x)\big)$ for $\mu_X$-almost
every $x\in X$. This, together with the assumption
$\L_Y(\pi\circ\phi)\in\linf\big(X,\B(\C^{d_\pi})\big)$, implies for $\mu_X$-almost
every $x\in X$ that
\begin{align*}
\big\|(\d\pi)_{e_G}\big((P_\phi M_\phi)(x)\big)\big\|_{\B(\C^{d_\pi})}
&=\lim_{N\to\infty}\left\|\frac1N\sum_{n=0}^{N-1}
\big(W_{\pi\circ\phi}^nM_{\pi\circ\phi}\big)(x)\right\|_{\B(\C^{d_\pi})}\\
&=\lim_{N\to\infty}\left\|\frac1N\sum_{n=0}^{N-1}\Ad_{(\pi\circ\phi^{(n)})(x)}
\big(M_{\pi\circ\phi}\circ F_n\big)(x)\right\|_{\B(\C^{d_\pi})}\\
&\le\lim_{N\to\infty}\frac1N\sum_{n=0}^{N-1}
\big\|\big(M_{\pi\circ\phi}\circ F_n\big)(x)\big\|_{\B(\C^{d_\pi})}\\
&\le\big\|M_{\pi\circ\phi}\big\|_{\linf(X,\B(\C^{d_\pi}))}\\
&<\infty.
\end{align*}
So, we have
$
(\d\pi)_{e_G}\big((P_\phi M_\phi)(\;\!\cdot\;\!)\big)
\in\linf\big(X,\B(\C^{d_\pi})\big)
$,
and thus the matrix-valued multiplication operator in $\H^{(\pi)}_j$ given by
$$
(\d\pi)_{e_G}\big((P_\phi M_\phi)(\;\!\cdot\;\!)\big)
\sum_{k=1}^{d_\pi}\varphi_k\otimes\pi_{jk}
=\sum_{k,\ell=1}^{d_\pi}
\big((\d\pi)_{e_G}\big((P_\phi M_\phi)(\;\!\cdot\;\!)\big)\big)_{k\ell}
\;\!\varphi_\ell\otimes\pi_{jk},\quad\varphi_k\in\ltwo(X,\mu_X),
$$
is bounded (we use the notation
$(\d\pi)_{e_G}\big((P_\phi M_\phi)(\;\!\cdot\;\!)\big)$ both for the element of
$\linf\big(X,\B(\C^{d_\pi})\big)$ and the corresponding multiplication operator in
$\H^{(\pi)}_j$).

To prove the claim, we have to show that the strong limit
$\slim_{N\to\infty}D_{\phi,\pi,N}$ exists and is equal to
$i\;\!(\d\pi)_{e_G}\big((P_\phi M_\phi)(\;\!\cdot\;\!)\big)$. A direct calculation
using the orthogonality relation \eqref{eq_orthogonality} and the notations
$$
E_{\phi,\pi,N}
:=\frac iN\sum_{n=0}^{N-1}W_{\pi\circ\phi}^nM_{\pi\circ\phi}
-i\;\!(\d\pi)_{e_G}\big((P_\phi M_\phi)(\;\!\cdot\;\!)\big)
\quad\hbox{and}\quad
\varphi:=(\varphi_1,\ldots,\varphi_{d_\pi})^\top
$$
gives for $\sum_{k=1}^{d_\pi}\varphi_k\otimes\pi_{jk}\in\H^{(\pi)}_j$
\begin{align}
&\left\|\left(D_{\phi,\pi,N}
-i\;\!(\d\pi)_{e_G}\big((P_\phi M_\phi)(\;\!\cdot\;\!)\big)\right)
\sum_{k=1}^{d_\pi}\varphi_k\otimes\pi_{jk}\right\|_{\H^{(\pi)}_j}^2\nonumber\\
&=(d_\pi)^{-1}\sum_{k=1}^{d_\pi}
\left\|\sum_{\ell=1}^{d_\pi}(E_{\phi,\pi,N})_{k\ell}\;\!\varphi_\ell\right\|
_{\ltwo(X,\mu_X)}^2\nonumber\\
&=(d_\pi)^{-1}\sum_{k=1}^{d_\pi}
\big\|\big(E_{\phi,\pi,N}\;\!\varphi\big)_k\big\|_{\ltwo(X,\mu_X)}^2\nonumber\\
&=(d_\pi)^{-1}\int_X\d\mu_X(x)\,
\big\|E_{\phi,\pi,N}(x)\;\!\varphi(x)\big\|_{\C^{\d_\pi}}^2\nonumber\\
&\le(d_\pi)^{-1}\int_X\d\mu_X(x)\,\big\|E_{\phi,\pi,N}(x)\big\|_{\B(\C^{\d_\pi})}^2
\|\varphi(x)\|_{\C^{\d_\pi}}^2.\label{eq_Lebesgue}
\end{align}
Furthermore, the inclusions
$\frac1N\sum_{n=0}^{N-1}W_{\pi\circ\phi}^nM_{\pi\circ\phi}\in\linf\big(X,\B(\C^{d_\pi})\big)$
and
$
(\d\pi)_{e_G}\big((P_\phi M_\phi)(\;\!\cdot\;\!)\big)
\in\linf\big(X,\B(\C^{d_\pi})\big)
$
imply that
$$
\big\|E_{\phi,\pi,N}(\;\!\cdot\;\!)\big\|_{\B(\C^{\d_\pi})}^2
\|\varphi(\;\!\cdot\;\!)\|_{\C^{\d_\pi}}^2
\le{\rm Const.}\;\!\|\varphi(\;\!\cdot\;\!)\|_{\C^{\d_\pi}}^2
\in\lone(X,\mu_X).
$$
So, we can apply Lebesgue's dominated convergence theorem in \eqref{eq_Lebesgue} in
conjunction with the equality
$\lim_{N\to\infty}\big\|E_{\phi,\pi,N}(x)\big\|_{\B(\C^{\d_\pi})}^2=0$ for
$\mu_X$-almost every $x\in X$ to get
\begin{align*}
&\lim_{N\to\infty}\left\|\left(D_{\phi,\pi,N}
-i\;\!(\d\pi)_{e_G}\big((P_\phi M_\phi)(\;\!\cdot\;\!)\big)\right)
\sum_{k=1}^{d_\pi}\varphi_k\otimes\pi_{jk}\right\|_{\H^{(\pi)}_j}^2\\
&\le{\rm Const.}\;\!(d_\pi)^{-1}\int_X\d\mu_X(x)\,
\lim_{N\to\infty}\big\|E_{\phi,\pi,N}(x)\big\|_{\B(\C^{\d_\pi})}^2
\|\varphi(x)\|_{\C^{\d_\pi}}^2\\
&=0.
\end{align*}
\end{proof}

Combining the results of Theorem \ref{thm_mixing} and Lemma \ref{lemma_D_phi_pi} we
obtain following criterion for the mixing property of
$U_{\phi,\pi,j}$ in $\H^{(\pi)}_j$ (and thus for the mixing property of the Koopman
operator $U_\phi$ in a subspace of $\H$).

\begin{Theorem}[Mixing property of $U_{\phi,\pi,j}$]\label{thm_mixing_D_pi}
Assume that $\L_Y\phi$ exists $\mu_X$-almost everywhere, that $M_\phi\in\ltwo(X,\g)$,
and that $\L_Y(\pi\circ\phi)\in\linf\big(X,\B(\C^{d_\pi})\big)$. Then, the strong
limit $D_{\phi,\pi}$ exists and is equal to
$i\;\!(\d\pi)_{e_G}\big((P_\phi M_\phi)(\;\!\cdot\;\!)\big)$, and
\begin{enumerate}
\item[(a)]
$\lim_{N\to\infty}\big\langle\varphi,\big(U_{\phi,\pi,j}\big)^N\psi\big\rangle=0$ for
each $\varphi\in\ker(D_{\phi,\pi})^\perp$ and $\psi\in\H^{(\pi)}_j$,
\item[(b)] $U_{\phi,\pi,j}\big|_{\ker(D_{\phi,\pi})^\perp}$ has purely continuous
spectrum.
\end{enumerate}
\end{Theorem}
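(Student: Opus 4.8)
The plan is to assemble the abstract mixing criterion of Section~\ref{section_commutators} with the concrete operators built earlier in this section, so that the proof reduces to checking that the hypotheses line up. First I would take the self-adjoint operator $A$ in $\H^{(\pi)}_j$ furnished by Lemma~\ref{lemma_A}. The standing assumption $\L_Y(\pi\circ\phi)\in\linf\big(X,\B(\C^{d_\pi})\big)$ is exactly what that lemma needs in order to conclude $U_{\phi,\pi,j}\in C^1(A)$, with the explicit commutator $[A,U_{\phi,\pi,j}]=iM_{\pi\circ\phi}U_{\phi,\pi,j}$. This puts us in the framework of Theorem~\ref{thm_mixing}, provided the relevant strong limit exists.

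Next I would invoke Lemma~\ref{lemma_D_phi_pi}. Under the three hypotheses of the present theorem ($\L_Y\phi$ exists $\mu_X$-almost everywhere, $M_\phi\in\ltwo(X,\g)$, and $\L_Y(\pi\circ\phi)\in\linf\big(X,\B(\C^{d_\pi})\big)$), that lemma asserts that the strong limit
$$
D_{\phi,\pi}=\slim_{N\to\infty}\frac1N\big[A,\big(U_{\phi,\pi,j}\big)^N\big]\big(U_{\phi,\pi,j}\big)^{-N}
$$
exists and equals the bounded matrix-valued multiplication operator by the function $i\,(\d\pi)_{e_G}\big((P_\phi M_\phi)(\;\!\cdot\;\!)\big)$; by Remark~\ref{rem_P_pi_phi} this is the same as multiplication by $iP_{\pi\circ\phi}M_{\pi\circ\phi}$. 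This already yields, verbatim, the first assertion of the theorem.

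Finally, applying Theorem~\ref{thm_mixing} with the choices $U=U_{\phi,\pi,j}$, $A$ as above, and $D=D_{\phi,\pi}$ --- all of whose hypotheses ($U\in C^1(A)$, existence of the strong limit $D$) are now in hand --- gives at once statements (a) and (b), with $\H$ replaced throughout by the reducing subspace $\H^{(\pi)}_j$. I do not expect any genuine obstacle at this stage: the analytic content lives in Lemmas~\ref{lemma_A} and~\ref{lemma_D_phi_pi} (construction of $A$, the $C^1$-regularity, the ergodic-averaging argument producing $D_{\phi,\pi}$) and in Theorem~\ref{thm_mixing} itself, so the present proof is a matter of citing these and matching notation. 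The one point worth recording explicitly is that $D_{\phi,\pi}$ is a multiplication operator, so $\ker(D_{\phi,\pi})^\perp$ is the concrete subspace of $\H^{(\pi)}_j$ determined fiberwise by the matrices $i(\d\pi)_{e_G}\big((P_\phi M_\phi)(x)\big)$ --- the form in which the statement is subsequently summed over $\pi\in\widehat G$ and $j\in\{1,\ldots,d_\pi\}$ to produce the global mixing result for $U_\phi$ on $\H$.
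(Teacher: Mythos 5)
Your proof is correct and follows exactly the route the paper takes: the theorem is stated immediately after Lemma~\ref{lemma_D_phi_pi} as a direct combination of that lemma (existence and identification of the strong limit $D_{\phi,\pi}$) with the abstract mixing criterion of Theorem~\ref{thm_mixing}, with $U_{\phi,\pi,j}\in C^1(A)$ supplied by Lemma~\ref{lemma_A}. Nothing is missing, and the closing remark about $\ker(D_{\phi,\pi})^\perp$ being a fiberwise-determined subspace is a fair gloss on how the result is later aggregated over $\pi$ and $j$.
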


We are now in position to present a criterion for the presence of a purely absolutely
continuous component in the spectrum of $U_{\phi,\pi,j}$. For this, we recall from
Lemma \ref{Lemma_A_B}(a) that if $D_{\phi,\pi}\in C^1(A)$ then the operator
$$
\mathscr A_{D_{\phi,\pi}}\sum_{k=1}^{d_\pi}\varphi_k\otimes\pi_{jk}
:=\big(AD_{\phi,\pi}+D_{\phi,\pi}A\big)\sum_{k=1}^{d_\pi}\varphi_k\otimes\pi_{jk},
\quad\sum_{k=1}^{d_\pi}\varphi_k\otimes\pi_{jk}\in\dom(A),
$$
is essentially self-adjoint in $\H^{(\pi)}_j$, and its closure
$A_{D_{\phi,\pi}}:=\overline{\mathscr A_{D_{\phi,\pi}}}$ has domain
$$
\dom\big(A_{D_{\phi,\pi}}\big)
=\left\{\sum_{k=1}^{d_\pi}\varphi_k\otimes\pi_{jk}\in\H^{(\pi)}_j
\mid D_{\phi,\pi}\sum_{k=1}^{d_\pi}\varphi_k\otimes\pi_{jk}\in\dom(A)\right\}.
$$
We also introduce the infimum
$$
a_{\phi,\pi}:=\essinf_{x\in X}\inf_{v\in\C^{d_\pi}\!,\,\|v\|_{\C^{d_\pi}}=1}
\left\langle v,\big(i\;\!(\d\pi)_{e_G}\big((P_\phi M_\phi)(x)\big)\big)^2v
\right\rangle_{\C^{d_\pi}},
$$
which is well-defined under the assumptions of Lemma \ref{lemma_dif_pi} since
$i\;\!(\d\pi)_{e_G}\big((P_\phi M_\phi)(x)\big)\in\B(\C^{d_\pi})$ exists and is
Hermitian for $\mu_X$-almost every $x\in X$. Moreover, the value of $a_{\phi,\pi}$ is
invariant under the relation of $C^1$-cohomology. Indeed, if
$\phi,\zeta,\delta\in C^1(X,G)$ are such that
$$
\phi(x)=\zeta(x)^{-1}\;\!\delta(x)\;\!(\zeta\circ F_1)(x)
\quad\hbox{for each $x\in X$,}
$$
then Lemma \ref{lemma_inv_degrees}(d) implies that
\begin{align}
a_{\delta,\pi}
&=\essinf_{x\in X}\inf_{v\in\C^{d_\pi}\!,\,\|v\|_{\C^{d_\pi}}=1}
\left\langle v,\big(i\;\!(\d\pi)_{e_G}\big((P_\delta M_\delta)(x)\big)\big)^2v
\right\rangle_{\C^{d_\pi}}\nonumber\\
&=\essinf_{x\in X}\inf_{v\in\C^{d_\pi}\!,\,\|v\|_{\C^{d_\pi}}=1}
\left\langle v,\big(i\;\!\Ad_{(\pi\circ\zeta)(x)}(\d\pi)_{e_G}
\big((P_\phi M_\phi)(x)\big)\big)^2v\right\rangle_{\C^{d_\pi}}\nonumber\\
&=\essinf_{x\in X}\inf_{v\in\C^{d_\pi}\!,\,\|v\|_{\C^{d_\pi}}=1}
\left\langle v,\Ad_{(\pi\circ\zeta)(x)}\big(i\;\!(\d\pi)_{e_G}
\big((P_\phi M_\phi)(x)\big)\big)^2v\right\rangle_{\C^{d_\pi}}\nonumber\\
&=\essinf_{x\in X}\inf_{(\pi\circ\zeta)(x)u\in\C^{d_\pi}\!,\,
\|(\pi\circ\zeta)(x)u\|_{\C^{d_\pi}}=1}\left\langle u,\big(i\;\!(\d\pi)_{e_G}
\big((P_\phi M_\phi)(x)\big)\big)^2u\right\rangle_{\C^{d_\pi}}\nonumber\\
&=\essinf_{x\in X}\inf_{u\in\C^{d_\pi}\!,\,\|u\|_{\C^{d_\pi}}=1}
\left\langle u,\big(i\;\!(\d\pi)_{e_G}\big((P_\phi M_\phi)(x)\big)\big)^2u
\right\rangle_{\C^{d_\pi}}\nonumber\\
&=a_{\phi,\pi}.\label{eq_a_delta_pi}
\end{align}

\begin{Theorem}[Absolutely continuous spectrum of $U_{\phi,\pi,j}$]\label{thm_absolute_D_pi}
Assume that
\begin{enumerate}
\item[(i)] $\L_Y\phi$ exists $\mu_X$-almost everywhere, $M_\phi\in\ltwo(X,\g)$, and
$\L_Y(\pi\circ\phi)\in\linf\big(X,\B(\C^{d_\pi})\big)$,
\item[(ii)] $\lim_{N\to\infty}\left\|\frac1N\sum_{n=0}^{N-1}W_{\pi\circ\phi}^n
M_{\pi\circ\phi}-(\d\pi)_{e_G}\big((P_\phi M_\phi)(\;\!\cdot\;\!)\big)\right\|
_{\linf(X,\B(\C^{\d_\pi}))}=0$,
\item[(iii)] $\L_Y\big((\d\pi)_{e_G}\big((P_\phi M_\phi)(\;\!\cdot\;\!)\big)\big)
\in\linf\big(X,\B(\C^{d_\pi})\big)$,
\item[(iv)] $M_{\pi\circ\phi}\in C^{+0}\big(A_{D_{\phi,\pi}}\big)$,
\item[(v)] $a_{\phi,\pi}>0$.
\end{enumerate}
Then, $U_{\phi,\pi,j}$ has purely absolutely continuous spectrum.
\end{Theorem}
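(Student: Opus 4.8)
The plan is to obtain the result as a direct application of the abstract criterion of Theorem \ref{thm_absolute}, with $U:=U_{\phi,\pi,j}$, with $A$ the self-adjoint operator of Lemma \ref{lemma_A}, and with $D:=D_{\phi,\pi}$ the bounded matrix-valued multiplication operator by $i\;\!(\d\pi)_{e_G}\big((P_\phi M_\phi)(\;\!\cdot\;\!)\big)$ furnished by Lemma \ref{lemma_D_phi_pi}. So the whole argument reduces to verifying the four structural hypotheses of Theorem \ref{thm_absolute} --- that $U\in C^1(A)$; that the \emph{uniform} limit $D=\ulim_{N\to\infty}\frac1N[A,U^N]U^{-N}$ exists; that $D\in C^1(A)$ and $[A,U]\in C^{+0}(A_D)$; and that $D^2E^U(\Theta)\ge c_\Theta E^U(\Theta)$ for some open $\Theta\subset\S^1$ and $c_\Theta>0$ --- and to observing that assumptions (i)--(v) have been arranged precisely so as to supply these four inputs.

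First, assumption (i) and Lemma \ref{lemma_A} yield $U\in C^1(A)$ together with the identity $[A,U]=iM_{\pi\circ\phi}U$. Next, the computation carried out in the proof of Lemma \ref{lemma_D_phi_pi} shows that $\frac1N[A,U^N]U^{-N}$ is the multiplication operator by the $\B(\C^{d_\pi})$-valued function $\frac iN\sum_{n=0}^{N-1}\big(W_{\pi\circ\phi}^nM_{\pi\circ\phi}\big)(\;\!\cdot\;\!)$; since the operator norm of multiplication by an element of $\linf\big(X,\B(\C^{d_\pi})\big)$ coincides with its $\linf$-norm, assumption (ii) says exactly that this sequence converges in operator norm to $D_{\phi,\pi}$, i.e. the uniform limit $D$ exists and equals $D_{\phi,\pi}$. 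For $D\in C^1(A)$: the operator $D$ is multiplication by $i\;\!(\d\pi)_{e_G}\big((P_\phi M_\phi)(\;\!\cdot\;\!)\big)\in\linf\big(X,\B(\C^{d_\pi})\big)$, and assumption (iii) together with the argument of Lemma \ref{lemma_A} (the same estimate applied to this multiplier) gives $D\in C^1(A)$. For $[A,U]\in C^{+0}(A_D)$: since $U\in C^1(A)$ and $[D,U]=0$, Lemma \ref{Lemma_A_B}(b) gives $U\in C^1(A_D)\subset C^{+0}(A_D)$; combining this with assumption (iv), namely $M_{\pi\circ\phi}\in C^{+0}(A_D)$, and with the stability of the class $C^{+0}(A_D)$ under products \cite[Prop.~5.2.3(b)]{ABG96}, one obtains $[A,U]=iM_{\pi\circ\phi}U\in C^{+0}(A_D)$.

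It remains to produce the open set $\Theta$. Since $\g_\pi\subset\fraku(d_\pi)$ consists of skew-Hermitian matrices, $i\;\!(\d\pi)_{e_G}\big((P_\phi M_\phi)(x)\big)$ is Hermitian for $\mu_X$-almost every $x\in X$, and assumption (v) is precisely the statement that $\big(i\;\!(\d\pi)_{e_G}\big((P_\phi M_\phi)(x)\big)\big)^2\ge a_{\phi,\pi}>0$ uniformly in $x$; hence $D^2\ge a_{\phi,\pi}$ as a multiplication operator on $\H^{(\pi)}_j$, so one may take $\Theta=\S^1$ and $c_\Theta=a_{\phi,\pi}$, and the inequality $D^2E^U(\Theta)\ge c_\Theta E^U(\Theta)$ holds trivially (in particular $\ker(D)=\{0\}$, so $\ker(D)^\perp=\H^{(\pi)}_j$). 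Theorem \ref{thm_absolute} then delivers that $U_{\phi,\pi,j}$ has purely absolutely continuous spectrum in $\S^1\cap\sigma(U_{\phi,\pi,j})=\sigma(U_{\phi,\pi,j})$, which is the assertion.

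The step I expect to cost the most care is the verification that $[A,U]\in C^{+0}(A_D)$: it is the only hypothesis that involves the auxiliary operator $A_D$ rather than $A$ itself, so one must track the Dini-type condition through Lemma \ref{Lemma_A_B}(b) and through the module properties of the class $C^{+0}$, and this is exactly the point where the somewhat technical assumption (iv) is needed --- rather than the simpler $M_{\pi\circ\phi}\in C^{+0}(A)$ together with $[A,D]=0$, which the discussion after Lemma \ref{U_C_1_A_DN} flags as plausibly sufficient but not proved. A secondary subtlety is that one genuinely needs the \emph{uniform} convergence of assumption (ii), strictly stronger than the strong convergence used in Lemma \ref{lemma_D_phi_pi}, since Proposition \ref{prop_Mourre} --- which underlies Theorem \ref{thm_absolute} --- is stated for the uniform limit.
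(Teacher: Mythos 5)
Your proposal is correct and follows essentially the same route as the paper: invoke Theorem \ref{thm_absolute} with $U=U_{\phi,\pi,j}$, $A$ from Lemma \ref{lemma_A}, $D=D_{\phi,\pi}$, using (i) for $U\in C^1(A)$, (ii) for the uniform limit, (iii) for $D\in C^1(A)$, (iv) plus Lemma \ref{Lemma_A_B}(b) and the product stability of $C^{+0}(A_D)$ for $[A,U]\in C^{+0}(A_D)$, and (v) for the strict Mourre positivity with $\Theta=\S^1$. Your commentary on why (iv) is stated with $A_D$ rather than $A$ also matches the paper's own remark after Lemma \ref{U_C_1_A_DN}.
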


\begin{proof}
The proof consists in verifying the assumptions of Theorem \ref{thm_absolute}. For
this, we first note from Lemma \ref{lemma_A} that $U_{\phi,\pi,j}\in C^1(A)$ with
$[A,U_{\phi,\pi,j}]=iM_{\pi\circ\phi}\;\!U_{\phi,\pi,j}$. We also note from Lemma
\ref{lemma_D_phi_pi} that the strong limit $D_{\phi,\pi}$ exists and is equal to
$i\;\!(\d\pi)_{e_G}\big((P_\phi M_\phi)(\;\!\cdot\;\!)\big)$. Therefore, the condition
(ii) and the calculation \eqref{eq_Lebesgue} imply that the uniform limit
$$
D_{\phi,\pi}
=\ulim_{N\to\infty}\frac1N\big[A,\big(U_{\phi,\pi,j}\big)^N\big]
\big(U_{\phi,\pi,j}\big)^{-N}
=\ulim_{N\to\infty}\frac iN\sum_{n=0}^{N-1}W_{\pi\circ\phi}^nM_{\pi\circ\phi}
$$
exists. In order to verify the assumption $D_{\phi,\pi}\in C^1(A)$ of Theorem
\ref{thm_absolute}, we take functions $\varphi_k\in C^1(X)$ and observe that
\begin{align*}
&\left\langle A\sum_{k=1}^{d_\pi}\varphi_k\otimes\pi_{jk},
D_{\phi,\pi}\sum_{k'=1}^{d_\pi}\varphi_{k'}\otimes\pi_{jk'}
\right\rangle_{\H^{(\pi)}_j}
-\left\langle\sum_{k=1}^{d_\pi}\varphi_k\otimes\pi_{jk},
D_{\phi,\pi}A\sum_{k'=1}^{d_\pi}\varphi_{k'}\otimes\pi_{jk'}
\right\rangle_{\H^{(\pi)}_j}\\
&=\left\langle\sum_{k=1}^{d_\pi}\varphi_k\otimes\pi_{jk},
-\L_Y\big((\d\pi)_{e_G}\big((P_\phi M_\phi)(\;\!\cdot\;\!)\big)\big)
\sum_{k'=1}^{d_\pi}\varphi_{k'}\otimes\pi_{jk'}\right\rangle_{\H^{(\pi)}_j},
\end{align*}
with $\L_Y\big((\d\pi)_{e_G}\big((P_\phi M_\phi)(\;\!\cdot\;\!)\big)\big)$ the
matrix-valued multiplication operator in $\H^{(\pi)}_j$ given by
$$
\L_Y\big((\d\pi)_{e_G}\big((P_\phi M_\phi)(\;\!\cdot\;\!)\big)\big)
\sum_{k=1}^{d_\pi}\varphi_k\otimes\pi_{jk}
:=\sum_{k,\ell=1}^{d_\pi}\L_Y\big((\d\pi)_{e_G}
\big((P_\phi M_\phi)(\;\!\cdot\;\!)\big)\big)_{k\ell}\;\!\varphi_\ell\otimes\pi_{jk}.
$$
Since the operator
$\L_Y\big((\d\pi)_{e_G}\big((P_\phi M_\phi)(\;\!\cdot\;\!)\big)\big)$ is bounded in
$\H^{(\pi)}_j$ due to condition (iii) and since the set of elements
$\sum_{k=1}^{d_\pi}\varphi_k\otimes\pi_{jk}$ with $\varphi_k\in C^1(X)$ is dense in
$\H^{(\pi)}_j$, this implies that $D_{\phi,\pi}\in C^1(A)$. In order to verify the
assumption $[A,U_{\phi,\pi,j}]\in C^{+0}\big(A_{D_{\phi,\pi}}\big)$ of Theorem
\ref{thm_absolute} we note that since $U_{\phi,\pi,j}\in C^1(A)$ and
$[D_\pi,U_{\phi,\pi,j}]=0$, Lemma \ref{Lemma_A_B}(b) implies that
$U_{\phi,\pi,j}\in C^1\big(A_{D_{\phi,\pi}}\big)$. This, together with condition
(iv), implies that
$
[A,U_{\phi,\pi,j}]=iM_{\pi\circ\phi}U_{\phi,\pi,j}
\in C^{+0}\big(A_{D_{\phi,\pi}}\big)
$
(see \cite[Prop.~5.2.3(b)]{ABG96}). To conclude, we note that a direct calculation and
condition (v) give
$$
\big(D_{\phi,\pi}\big)^2
\ge\essinf_{x\in X}\inf_{v\in\C^{d_\pi}\!,\,\|v\|_{\C^{d_\pi}}=1}
\left\langle v,\big(i\;\!(\d\pi)_{e_G}\big((P_\phi M_\phi)(x)\big)\big)^2v
\right\rangle_{\C^{d_\pi}}
=a_{\phi,\pi}
>0.
$$
This proves the last assumption of Theorem \ref{thm_absolute} for the set
$\Theta=\S^1$.
\end{proof}

\begin{Remark}[Degree of the cocycle $\phi$]\label{remark_degree}
The function $P_\phi M_\phi:X\to\g$ can be interpreted as a matrix-valued degree of
the cocycle $\phi:X\to G$. It generalises the notions of degrees of a cocycle
appearing in \cite{Anz51,Fra00,Fur61,GLL91,Iwa97_2,ILR93} when $G$ is a torus, in
\cite{Fra00_2,Fra04} when $X$ is a torus and $G=\SU(2)$, and in \cite[Sec.~3.3]{Kar13}
when $X$ is a torus and $G$ is semisimple. (In \cite{Fra00_2,Fra04}, K. Fr{\c{a}}czek
calls degree the norm $\|(P_\phi M_\phi)(\;\!\cdot\;\!)\|_\g$ instead $P_\phi M_\phi$
itself, but in the case $G=\SU(2)$ and $F_1$ an irrational rotation that he considers
the difference is not relevant. For more complex/higher-dimensional Lie groups $G$ and
more general maps $F_1$, the distinction becomes relevant. In \cite[Sec.~5.2]{Kar13},
N. Karaliolios calls energy the norm $\|(P_\phi M_\phi)(\;\!\cdot\;\!)\|_\g$ to
emphasize the distinction.) In an analogous way, the function
$P_{\pi\circ\phi}M_{\pi\circ\phi}:X\to\g_\pi$ can be interpreted as a matrix-valued
degree of the cocycle $\pi\circ\phi:X\to\pi(G)$, and Remark \ref{rem_P_pi_phi} shows
that the degree $P_{\pi\circ\phi}M_{\pi\circ\phi}$ of $\pi\circ\phi$ is equal to
$(\d\pi)_{e_G}\big((P_\phi M_\phi)(\;\!\cdot\;\!)\big)$, the image of $P_\phi M_\phi$
under the differential (pushforward) $(\d\pi)_{e_G}:\g\to\g_\pi$. Both degrees
$P_\phi M_\phi$ and $P_{\pi\circ\phi}M_{\pi\circ\phi}$ transform in a natural way
under Lie group homomorphisms and under the relation of $C^1$-cohomology (see Lemma
\ref{lemma_inv_degrees}).

With this in mind, the result of Theorem \ref{thm_mixing_D_pi} can be rephrased
informally as follows: if some regularity assumptions on $\phi$ are satisfied (for
instance if $\phi\in C^1(X,G)$), then the degree $P_{\pi\circ\phi}M_{\pi\circ\phi}$
exists and the operator $U_\phi$ is mixing in the orthocomplement of the kernel of
$P_{\pi\circ\phi}M_{\pi\circ\phi}$ in $\H^{(\pi)}_j$. Similarly, the result of Theorem
\ref{thm_absolute_D_pi} can be rephrased as follows: if some additional assumptions
are satisfied, then the operator $U_\phi$ has purely absolutely continuous spectrum in
$\H^{(\pi)}_j$ if $(iP_{\pi\circ\phi}M_{\pi\circ\phi})^2$ is strictly positive.
Summing up these individual results for the operators $U_{\phi,\pi,j}$ in the
subspaces $\H^{(\pi)}_j$, one obtains a global result for the mixing property and the
absolutely continuous spectrum of the Koopman operator $U_\phi$ in the whole Hilbert
space $\H$.
\end{Remark}

\begin{Remark}
Theorem \ref{thm_absolute_D_pi} generalises in various ways Theorem 3.5 of
\cite{Tie15_2}. First, the base space $X$ here is only supposed to be compact manifold
and not a compact Lie group as in \cite{Tie15_2}. Furthermore, the use of the operator
$D_{\phi,\pi}$ in the definition of the conjugate operator $A_{D_{\phi,\pi},N}$ (see
Proposition \ref{prop_conjugate}) allowed us to remove a commutation assumption made
in \cite[Thm.~3.5]{Tie15_2} which forced the matrix-valued function $\pi\circ\phi$ to
be diagonal up to conjugation (in applications, this forced the cocycle $\phi$ to be
trivially cohomologous to a diagonal cocycle). Here, this commutation assumption is to
some extent replaced by the commutation relation
$[D_{\phi,\pi},(U_{\phi,\pi,j})^n]=0$ which is automatically satisfied for each
$n\in\Z$. Finally, the positivity assumption $a_{\phi,\pi}>0$ in Theorem
\ref{thm_absolute_D_pi} given in terms of the function
$(\d\pi)_{e_G}\big((P_\phi M_\phi)(\;\!\cdot\;\!)\big)$ is more natural and less
restrictive than the positivity assumption in \cite[Thm.~3.5]{Tie15_2} given in terms
of the sequence $\frac1N\sum_{n=0}^{N-1}M_{\pi\circ\phi}\circ F_n$. Indeed, the
assumption $a_{\phi,\pi}>0$ is more natural because it does not change under the
relation of  $C^1$-cohomology due to \eqref{eq_a_delta_pi}, and it is less restrictive
because in general it is the sequence
$\frac1N\sum_{n=0}^{N-1}W_{\pi\circ\phi}^nM_{\pi\circ\phi}$ and its limit
$(\d\pi)_{e_G}\big((P_\phi M_\phi)(\;\!\cdot\;\!)\big)$ which encode the asymptotic
behaviour of the operator $U_{\phi,\pi,j}$ and not the sequence
$\frac1N\sum_{n=0}^{N-1}M_{\pi\circ\phi}\circ F_n$ (Lemma \ref{lemma_ergodic} below
furnish an illustration of this fact).
\end{Remark}

We now present two particular cases where the condition (ii) of Theorem
\ref{thm_absolute_D_pi} is satisfied and the function $P_\phi M_\phi$ takes a simple
form. For this, we need to fix some notations. We write
$$
\int_X\d\mu_X(x)\,M_\phi(x)
\quad\hbox{and}\quad
\int_{X\times G}\d(\mu_X\otimes\mu_G)(x,g)\,\Ad_gM_\phi(x)
$$
for the the strong integrals of the functions $X\ni x\mapsto M_\phi(x)\in\g$ and
$X\times G\ni(x,g)\mapsto\Ad_gM_\phi(x)\in\g$ (if they exist). We write
$$
\g^{\Ad}:=\big\{Z\in\g\mid\Ad_gZ=Z~\hbox{for all $g\in G$}\big\}
$$
for the invariant subspace of the adjoint representation $\Ad$ of $G$. Finally, we let
$P_{\Ad}\in\B(\g)$ be the operator given by
\begin{equation}\label{eq_P_Ad}
P_{\Ad}(Z):=\int_G\d\mu_G(g)\Ad_gZ,\quad Z\in\g,
\end{equation}
and we note that since $\Ad$ is a finite-dimensional unitary representation of $G$ on
$\g$, $P_{\Ad}$ coincides with the orthogonal projection onto $\g^{\Ad}$ (see
\cite[Lemma~12.16]{Hal15}).

\begin{Lemma}[$F_1$ uniquely ergodic and $\pi\circ\phi$ diagonal]\label{lemma_diagonal}
Assume that $\phi\in C^1(X,G)$, that $F_1$ is uniquely ergodic, and that $\pi\circ\phi$
is diagonal (that is, $\pi_{k\ell}\circ\phi=(\pi_{k\ell}\circ\phi)\;\!\delta_{k\ell}$
for each $k,\ell\in\{1,\ldots,d_\pi\}$). Then,
$$
\lim_{N\to\infty}\left\|\frac1N\sum_{n=0}^{N-1}W_{\pi\circ\phi}^nM_{\pi\circ\phi}
-(\d\pi)_{e_G}(M_{\phi,\star})\right\|_{\linf(X,\B(\C^{\d_\pi}))}=0,
$$
with
$$
M_{\phi,\star}:=\int_X\d\mu_X(x)\,M_\phi(x).
$$
Furthermore, if $\pi\circ\phi$ is diagonal for each $\pi\in\widehat G$, then
$P_\phi M_\phi=M_{\phi,\star}$ $\mu_X$-almost everywhere.
\end{Lemma}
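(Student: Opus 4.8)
The plan is to use the diagonality hypothesis to collapse the Ces\`aro average $\frac1N\sum_{n=0}^{N-1}W_{\pi\circ\phi}^nM_{\pi\circ\phi}$ into an ordinary Birkhoff average over $F_1$, and then to apply the uniform ergodic theorem for uniquely ergodic transformations. Since $\phi\in C^1(X,G)$ and $X$ is compact, the functions $M_\phi=\L_Y\phi\cdot\phi^{-1}$ and $M_{\pi\circ\phi}=\L_Y(\pi\circ\phi)\cdot(\pi\circ\phi)^{-1}$ are continuous, $\g$-valued and $\B(\C^{d_\pi})$-valued respectively; in particular the hypotheses of Lemma~\ref{lemma_dif_pi} are met. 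When $\pi\circ\phi$ is diagonal, $M_{\pi\circ\phi}(x)$ is a product of two diagonal matrices, hence diagonal, for every $x\in X$, and $\pi\circ\phi^{(n)}=\prod_{k=0}^{n-1}\big((\pi\circ\phi)\circ F_k\big)$ is diagonal for each $n\ge0$. Since a diagonal matrix is fixed by conjugation by a diagonal unitary matrix, the formula for the iterates of $W_{\pi\circ\phi}$ gives
$$
\big(W_{\pi\circ\phi}^nM_{\pi\circ\phi}\big)(x)
=\Ad_{(\pi\circ\phi^{(n)})(x)}\big(M_{\pi\circ\phi}\circ F_n\big)(x)
=\big(M_{\pi\circ\phi}\circ F_n\big)(x),\quad n\ge0,~x\in X,
$$
so that $\frac1N\sum_{n=0}^{N-1}W_{\pi\circ\phi}^nM_{\pi\circ\phi}=\frac1N\sum_{n=0}^{N-1}M_{\pi\circ\phi}\circ F_n$.

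Next, $F_1$ being uniquely ergodic, its unique invariant Borel probability measure is $\mu_X$, and the classical characterization of unique ergodicity on the compact metrizable space $X$ --- applied to each of the finitely many continuous scalar matrix entries of $M_{\pi\circ\phi}$ --- yields
$$
\lim_{N\to\infty}\left\|\frac1N\sum_{n=0}^{N-1}M_{\pi\circ\phi}\circ F_n
-\int_X\d\mu_X(x)\,M_{\pi\circ\phi}(x)\right\|_{\linf(X,\B(\C^{d_\pi}))}=0.
$$
By \eqref{eq_d_M_phi} one has $M_{\pi\circ\phi}(x)=(\d\pi)_{e_G}\big(M_\phi(x)\big)$ for every $x$, and the $\R$-linearity and continuity of $(\d\pi)_{e_G}$ give $\int_X\d\mu_X\,M_{\pi\circ\phi}=(\d\pi)_{e_G}(M_{\phi,\star})$. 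Together with the previous paragraph this proves the first assertion.

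For the last assertion, note that the first assertion says $\frac1N\sum_{n=0}^{N-1}W_{\pi\circ\phi}^nM_{\pi\circ\phi}$ converges to $(\d\pi)_{e_G}(M_{\phi,\star})$ in $\linf(X,\B(\C^{d_\pi}))$, while Lemma~\ref{lemma_dif_pi} says the same sequence converges $\mu_X$-almost everywhere to $(\d\pi)_{e_G}\big((P_\phi M_\phi)(\;\!\cdot\;\!)\big)$; uniqueness of the limit forces $(\d\pi)_{e_G}\big((P_\phi M_\phi)(x)-M_{\phi,\star}\big)=0$ for $\mu_X$-almost every $x$. Now take an injective $C^\infty$ group homomorphism $\pi_*:G\to\U(n)$ as in \cite[Cor.~4.22]{Kna02} and decompose the unitary representation $\pi_*$ into irreducibles, $\pi_*\cong\bigoplus_{i=1}^m\pi_i$ with $\pi_i\in\widehat G$. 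As $\ker\pi_*=\{e_G\}$ has trivial Lie algebra, $(\d\pi_*)_{e_G}$ is injective; since $(\d\pi_*)_{e_G}$ equals $\bigoplus_i(\d\pi_i)_{e_G}$ up to conjugation, we get $\bigcap_{i=1}^m\ker(\d\pi_i)_{e_G}=\{0\}$. If $\pi\circ\phi$ is diagonal for every $\pi\in\widehat G$, then the previous argument applies to each $\pi_i$; discarding the union of the $m$ associated null sets we obtain $(\d\pi_i)_{e_G}\big((P_\phi M_\phi)(x)-M_{\phi,\star}\big)=0$ for all $i$ and $\mu_X$-almost every $x$, whence $(P_\phi M_\phi)(x)=M_{\phi,\star}$ for $\mu_X$-almost every $x$.

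The computations are routine; the two points needing a little care are the bookkeeping in the first step --- checking that $M_{\pi\circ\phi}$, its translates $M_{\pi\circ\phi}\circ F_n$, and the iterated cocycles $\pi\circ\phi^{(n)}$ are all genuinely diagonal, so that the $\Ad$-twist in $W_{\pi\circ\phi}$ disappears --- and the short representation-theoretic observation that $\bigcap_{\pi\in\widehat G}\ker(\d\pi)_{e_G}=\{0\}$, used to pass from the images under all differentials back to $\g$ itself.
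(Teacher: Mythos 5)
Your proof is correct and follows essentially the same route as the paper's: use diagonality to collapse the $\Ad$-twist so that the Ces\`aro sum becomes a Birkhoff average, apply the uniform ergodic theorem for uniquely ergodic $F_1$ to the continuous matrix entries of $M_{\pi\circ\phi}$, identify the limit via \eqref{eq_d_M_phi}, and for the last assertion combine with Lemma~\ref{lemma_dif_pi} and the fact that $\bigcap_{\pi\in\widehat G}\ker(\d\pi)_{e_G}=\{0\}$ proved by decomposing a faithful representation $\pi_*$ into irreducibles. The only cosmetic difference is that you argue injectivity of $(\d\pi_*)_{e_G}$ from triviality of $\ker\pi_*$ rather than citing the isomorphism $\g\simeq\g_{\pi_*}$ as the paper does, but the content is identical.
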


\begin{proof}
The diagonality of $\pi\circ\phi$  implies that $M_{\pi\circ\phi}$ is diagonal and that
$W_{\pi\circ\phi}^nM_{\pi\circ\phi}=M_{\pi\circ\phi}\circ F_n$, and the condition
$\phi\in C^1(X,G)$ implies that $M_\phi\in C(X,\g)$ and $M_{\pi\circ\phi}\in C(X,\g_\pi)$.
So, the strong integral
$$
M_{\phi,\star}=\int_X\d\mu_X(x)\,M_\phi(x)\in\g
$$
exists. Therefore, it follows from the linearity and continuity of $(\d\pi)_{e_G}$,
from \eqref{eq_d_M_phi}, and from the unique ergodicity of $F_1$ that
\begin{align}
&\lim_{N\to\infty}\left\|\frac1N\sum_{n=0}^{N-1}W_{\pi\circ\phi}^nM_{\pi\circ\phi}
-(\d\pi)_{e_G}(M_{\phi,\star})\right\|_{\linf(X,\B(\C^{\d_\pi}))}\nonumber\\
&=\lim_{N\to\infty}\left\|\frac1N\sum_{n=0}^{N-1}M_{\pi\circ\phi}\circ F_n
-\int_X\d\mu_X(x)\,(\d\pi)_{e_G}\big(M_\phi(x)\big)\right\|
_{\linf(X,\B(\C^{\d_\pi}))}\nonumber\\
&=\lim_{N\to\infty}\left\|\frac1N\sum_{n=0}^{N-1}M_{\pi\circ\phi}\circ F_n
-\int_X\d\mu_X(x)\,M_{\pi\circ\phi}(x)\right\|
_{\linf(X,\B(\C^{\d_\pi}))}\nonumber\\
&\le{\rm Const.}\lim_{N\to\infty}\;\!\max_{k\in\{1,\ldots,d_\pi\}}
\left\|\frac1N\sum_{n=0}^{N-1}\big(M_{\pi\circ\phi}\big)_{kk}\circ F_n
-\int_X\d\mu_X(x)\,\big(M_{\pi\circ\phi}\big)_{kk}(x)\right\|_{\linf(X,\mu_X)}\nonumber\\
&=0.\label{eq_d_pi_M_star}
\end{align}

For the second claim, we note that \eqref{eq_d_pi_PM} and \eqref{eq_d_pi_M_star} imply
for $\mu_X$-almost every $x\in X$ that
$$
0=(\d\pi)_{e_G}\big((P_\phi M_\phi)(x)\big)-(\d\pi)_{e_G}(M_{\phi,\star})
=(\d\pi)_{e_G}\big(\big(P_\phi M_\phi\big)(x)-M_{\phi,\star}\big).
$$
Thus, $\big(P_\phi M_\phi\big)(x)-M_{\phi,\star}\in\ker(\d\pi)_{e_G}$ for
$\mu_X$-almost every $x\in X$. Since this holds for each $\pi\in\widehat G$, we even
get that
$$
\big(P_\phi M_\phi\big)(x)-M_{\phi,\star}
\in\bigcap_{\pi\in\widehat G}\ker(\d\pi)_{e_G}
$$
for $\mu_X$-almost every $x\in X$. Therefore, it is sufficient to show that
$\bigcap_{\pi\in\widehat G}\ker(\d\pi)_{e_G}=\{0\}$ to conclude. So, suppose by absurd
that there exists $Z\in\bigcap_{\pi\in\widehat G}\ker(\d\pi)_{e_G}\setminus\{0\}$, and
take the Lie group isomorphism $\pi_*:G\to\pi_*(G)\subset\U(n)$ mentioned before Lemma 
\ref{lemma_W}. Since $Z\ne0$ and $(\d\pi_*)_{e_G}:\g\to\g_{\pi_*}$ is a Lie algebra
isomorphism \cite[Rem.~2.1.52(i)]{BLU07}, we have $(\d\pi_*)_{e_G}(Z)\ne0$. On another
hand, since $\pi_*$ is a finite-dimensional unitary representation of $G$, there exist
$m\in\N^*$ and $\pi_1,\ldots,\pi_m\in\widehat G$ such that
$\pi_*=\pi_1\oplus\cdots\oplus\pi_m$. Therefore,
$$
(\d\pi_*)_{e_G}(Z)
=\big(\d(\pi_1\oplus\cdots\oplus\pi_m)\big)_{e_G}(Z)
=\big((\d\pi_1)_{e_G}(Z),\ldots,(\d\pi_m)_{e_G}(Z)\big)
=(0,\ldots,0)
=0
$$
due to the inclusion $Z\in\bigcap_{\pi\in\widehat G}\ker(\d\pi)_{e_G}$. This is a
contradiction. Thus, $\bigcap_{\pi\in\widehat G}\ker(\d\pi)_{e_G}=\{0\}$, and the
claim is proved.
\end{proof}

\begin{Lemma}[$T_\phi$ uniquely ergodic]\label{lemma_ergodic}
Assume that $\phi\in C^1(X,G)$ and that $T_\phi$ is uniquely ergodic. Then,
$$
\lim_{N\to\infty}\left\|\frac1N\sum_{n=0}^{N-1}W_{\pi\circ\phi}^nM_{\pi\circ\phi}
-(\d\pi)_{e_G}(M_{\phi,\star})\right\|_{\linf(X,\B(\C^{\d_\pi}))}=0,
$$
with
\begin{equation}\label{eq_M_phi_star_Ad}
M_{\phi,\star}
:=\int_{X\times G}\d(\mu_X\otimes\mu_G)(x,g)\,\Ad_gM_\phi(x)
=P_{\Ad}\left(\int_X\d\mu_X(x)\,M_\phi(x)\right).
\end{equation}
Furthermore, $P_\phi M_\phi=M_{\phi,\star}$ $\mu_X$-almost everywhere.
\end{Lemma}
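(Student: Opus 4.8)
First I would observe that $X\times G$ is a compact metric space and that $T_\phi(x,g)=\big(F_1(x),g\;\!\phi(x)\big)$ is a homeomorphism of it, since $F_1$ is a diffeomorphism of $X$ and $\phi\in C^1(X,G)$; moreover $\mu_X\otimes\mu_G$ is $T_\phi$-invariant, so under the hypothesis it is \emph{the} unique $T_\phi$-invariant Borel probability measure. The plan is then to reduce the statement to the classical fact that, for a uniquely ergodic homeomorphism of a compact metric space, the Birkhoff averages of any continuous function converge \emph{uniformly} to the space average. Since $\phi\in C^1(X,G)$ one has $M_\phi=\L_Y\phi\cdot\phi^{-1}\in C(X,\g)$ (indeed $(\L_Y\phi)(x)=\d\phi_x\big(Y(x)\big)$ with $\d\phi$ continuous and $Y$ the $C^0$ vector field of the flow), hence the function
$$
\widetilde{M_\phi}:X\times G\to\g,\qquad\widetilde{M_\phi}(x,g):=\Ad_gM_\phi(x),
$$
is continuous on $X\times G$ because $\Ad$ is smooth. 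Applying the classical fact to $\widetilde{M_\phi}$ then gives
$$
\lim_{N\to\infty}\,\sup_{(x,g)\in X\times G}\left\|\frac1N\sum_{n=0}^{N-1}\widetilde{M_\phi}\big(T_\phi^n(x,g)\big)-M_{\phi,\star}\right\|_\g=0,\qquad M_{\phi,\star}:=\int_{X\times G}\d(\mu_X\otimes\mu_G)(x,g)\,\Ad_gM_\phi(x).
$$

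Next I would use the identity \eqref{eq_W_and_tilde}, $\widetilde{M_\phi}\big(T_\phi^n(x,g)\big)=\Ad_g\big(W_\phi^nM_\phi\big)(x)$, specialised to $g=e_G$, to obtain
$$
\lim_{N\to\infty}\,\sup_{x\in X}\left\|\frac1N\sum_{n=0}^{N-1}\big(W_\phi^nM_\phi\big)(x)-M_{\phi,\star}\right\|_\g=0.
$$
The second equality in \eqref{eq_M_phi_star_Ad} is just Tonelli's theorem together with the fact that the bounded operator $P_{\Ad}$ of \eqref{eq_P_Ad} commutes with the strong integral over $X$, i.e. $M_{\phi,\star}=\int_X\d\mu_X(x)\,\big(\int_G\d\mu_G(g)\,\Ad_g\big)M_\phi(x)=P_{\Ad}\big(\int_X\d\mu_X(x)\,M_\phi(x)\big)$. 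To pass from $M_\phi$ to $M_{\pi\circ\phi}$, I would invoke the identity $\frac1N\sum_{n=0}^{N-1}\big(W_{\pi\circ\phi}^nM_{\pi\circ\phi}\big)(x)=(\d\pi)_{e_G}\big(\frac1N\sum_{n=0}^{N-1}\big(W_\phi^nM_\phi\big)(x)\big)$ established inside the proof of Lemma \ref{lemma_dif_pi}, and use the $\R$-linearity and continuity of $(\d\pi)_{e_G}:\g\to\g_\pi$ to bound, pointwise in $x$, the $\B(\C^{d_\pi})$-norm of the difference for $M_{\pi\circ\phi}$ by a constant times the $\g$-norm of the difference for $M_\phi$; taking the supremum over $x$ and letting $N\to\infty$ then yields the claimed $\linf\big(X,\B(\C^{d_\pi})\big)$-convergence with limit $(\d\pi)_{e_G}(M_{\phi,\star})$.

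Finally, for $P_\phi M_\phi=M_{\phi,\star}$ $\mu_X$-almost everywhere: since $M_\phi\in C(X,\g)\subset\ltwo(X,\g)$, Lemma \ref{lemma_P_phi}(c) shows that $\frac1N\sum_{n=0}^{N-1}\big(W_\phi^nM_\phi\big)(x)$ converges in $\g$ to $(P_\phi M_\phi)(x)$ for $\mu_X$-almost every $x\in X$; but the uniform convergence just obtained forces this same sequence to converge to the constant $M_{\phi,\star}$ for every $x\in X$, so by uniqueness of the limit $(P_\phi M_\phi)(x)=M_{\phi,\star}$ for $\mu_X$-almost every $x\in X$. The one step deserving genuine care is the very first one — verifying that $M_\phi$ is honestly continuous and that $T_\phi$ is a homeomorphism of a compact metric space, so that the uniform (sup-norm) version of Birkhoff's theorem for uniquely ergodic systems is legitimately available; once that is in place the rest is bookkeeping with the identities \eqref{eq_W_and_tilde}, \eqref{eq_P_Ad}, and the one from the proof of Lemma \ref{lemma_dif_pi}.
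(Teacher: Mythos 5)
Your argument is correct, and it follows the same essential strategy as the paper: exploit the fact that unique ergodicity of a homeomorphism of a compact metric space forces Birkhoff averages of continuous functions to converge \emph{uniformly} (not merely $\mu$-a.e.), apply this to the function $(x,g)\mapsto\Ad_gM_\phi(x)$, and then specialise $g=e_G$ via the identity \eqref{eq_W_and_tilde}. That said, there are two places where your route is slightly cleaner than the paper's. First, the paper does not invoke the vector-valued uniform ergodic theorem directly; instead it forms, for each unit vector $v\in\C^{d_\pi}$, the \emph{scalar} continuous function $f_v(x,g)=\langle v,\Ad_{\pi(g)}M_{\pi\circ\phi}(x)\;\!v\rangle_{\C^{d_\pi}}$, applies the scalar uniform ergodic theorem to each $f_v$, and then recovers the operator-norm convergence by a sup-over-$(x,v)$ argument via the extreme value theorem; your direct application of the theorem to the $\g$-valued function $\widetilde{M_\phi}$ (legitimate since $\g$ is finite-dimensional, so one may argue component-wise in a basis) sidesteps this and is a bit more transparent. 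Second, because you obtain uniform convergence of $\frac1N\sum_{n=0}^{N-1}(W_\phi^nM_\phi)(\cdot)$ at the level of $\g$ itself, the final identification $P_\phi M_\phi=M_{\phi,\star}$ a.e.\ is immediate from Lemma \ref{lemma_P_phi}(c) and uniqueness of limits; the paper, having only established convergence at the $\g_\pi$ level for each $\pi$, refers back to the proof of Lemma \ref{lemma_diagonal}, where one must additionally invoke $\bigcap_{\pi\in\widehat G}\ker(\d\pi)_{e_G}=\{0\}$ via the faithful representation $\pi_*$. So you obtain the final claim more economically. The one thing worth flagging explicitly in a full write-up is the extension of the scalar uniform ergodic theorem to the finite-dimensional vector-valued case, but as you essentially note, this is routine.
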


\begin{proof}
The condition $\phi\in C^1(X,G)$ implies that $M_\phi\in C(X,\g)$ and
$M_{\pi\circ\phi}\in C(X,\g_\pi)$. So, the strong integral
$$
M_{\phi,\star}=\int_{X\times G}\d(\mu_X\otimes\mu_G)(x,g)\,\Ad_gM_\phi(x)\in\g
$$
exists. Define for $v\in\C^{d_\pi}$ the function
$$
f_v:X\times G\to\C,~~(x,g)\mapsto
\big\langle v,\Ad_{\pi(g)}M_{\pi\circ\phi}(x)v\big\rangle_{\C^{d_\pi}}.
$$
We have $f_v\in C(X\times G)$, and \eqref{eq_d_M_phi} implies that
$$
f_v(x,g)=\big\langle v,(\d\pi)_{e_G}
\big(\Ad_gM_\phi(x)\big)v\big\rangle_{\C^{d_\pi}}
\quad\hbox{for each $(x,g)\in X\times G$.}
$$
So, it follows from the linearity and continuity of $(\d\pi)_{e_G}$ that
\begin{align*}
\int_{X\times G}\d(\mu_X\otimes\mu_G)(x,g)\,f_v(x,g)
&=\int_{X\times G}\d(\mu_X\otimes\mu_G)(x,g)\,
\big\langle v,(\d\pi)_{e_G}\big(\Ad_gM_\phi(x)\big)v\big\rangle_{\C^{d_\pi}}\\
&=\left\langle v,(\d\pi)_{e_G}\left(\int_{X\times G}
\d(\mu_X\otimes\mu_G)(x,g)\,\Ad_gM_\phi(x)\right)v\right\rangle_{\C^{d_\pi}}\\
&=\big\langle v,(\d\pi)_{e_G}(M_{\phi,\star})v\big\rangle_{\C^{d_\pi}}.
\end{align*}
On another hand, a calculation as in \eqref{eq_W_and_tilde} shows that
$$
\frac1N\sum_{n=0}^{N-1}\big(f_v\circ T_\phi^n\big)(x,g)
=\left\langle v,\frac1N\sum_{n=0}^{N-1}\Ad_{\pi(g)}
\big(W_{\pi\circ\phi}^nM_{\pi\circ\phi}\big)(x)v\right\rangle_{\C^{d_\pi}}
\quad\hbox{for each $(x,g)\in X\times G$.}
$$
Therefore, we infer from the unique ergodicity of $T_\phi$ that for each
$(x,g)\in X\times G$
\begin{align*}
&\lim_{N\to\infty}\left\langle v,\left(\frac1N\sum_{n=0}^{N-1}\Ad_{\pi(g)}
\big(W_{\pi\circ\phi}^nM_{\pi\circ\phi}\big)(x)
-(\d\pi)_{e_G}(M_{\phi,\star})\right)v\right\rangle_{\C^{d_\pi}}\\
&=\lim_{N\to\infty}\left(\frac1N\sum_{n=0}^{N-1}\big(f_v\circ T_\phi^n\big)(x,g)
-\int_{X\times G}\d(\mu_X\otimes\mu_G)(x,g)\,f_v(x,g)\right)\\
&=0.
\end{align*}
Setting $g=e_G$, we thus get for each $x\in X$
\begin{equation}\label{eq_scalar}
\lim_{N\to\infty}\left\langle v,\left(\frac1N\sum_{n=0}^{N-1}
\big(W_{\pi\circ\phi}^nM_{\pi\circ\phi}\big)(x)
-(\d\pi)_{e_G}(M_{\phi,\star})\right)v\right\rangle_{\C^{d_\pi}}=0.
\end{equation}
Now, the extreme value theorem for continuous functions on compact sets implies the
existence of $x_0\in X$ and $v_0\in\C^{d_\pi}$ with $\|v_0\|_{\C^{d_\pi}}=1$ such that
\begin{align*}
&\left\|\frac1N\sum_{n=0}^{N-1}W_{\pi\circ\phi}^nM_{\pi\circ\phi}
-(\d\pi)_{e_G}(M_{\phi,\star})\right\|_{\linf(X,\B(\C^{\d_\pi}))}\\
&=\sup_{x\in X}\;\!\sup_{v\in\C^{d_\pi}\!,\,\|v\|_{\C^{d_\pi}}=1}
\left|\left\langle v,\left(\frac1N\sum_{n=0}^{N-1}\big(W_{\pi\circ\phi}^n
M_{\pi\circ\phi}\big)(x)-(\d\pi)_{e_G}(M_{\phi,\star})\right)v
\right\rangle_{\C^{d_\pi}}\right|\\
&=\left|\left\langle v_0,\left(\frac1N\sum_{n=0}^{N-1}\big(W_{\pi\circ\phi}^n
M_{\pi\circ\phi}\big)(x_0)-(\d\pi)_{e_G}(M_{\phi,\star})\right)v_0
\right\rangle_{\C^{d_\pi}}\right|.
\end{align*}
Thus, we deduce from \eqref{eq_scalar} that
\begin{align*}
&\lim_{N\to\infty}\left\|\frac1N\sum_{n=0}^{N-1}W_{\pi\circ\phi}^nM_{\pi\circ\phi}
-(\d\pi)_{e_G}(M_{\phi,\star})\right\|_{\linf(X,\B(\C^{\d_\pi}))}\\
&=\lim_{N\to\infty}\left|\left\langle v_0,\left(\frac1N\sum_{n=0}^{N-1}
\big(W_{\pi\circ\phi}^nM_{\pi\circ\phi}\big)(x_0)
-(\d\pi)_{e_G}(M_{\phi,\star})\right)v_0\right\rangle_{\C^{d_\pi}}\right|\\
&=0,
\end{align*}
which proves the first equality in \eqref{eq_M_phi_star_Ad}.

The second equality in \eqref{eq_M_phi_star_Ad} follows directly from the definition 
\eqref{eq_P_Ad} of the operator $P_{\Ad}$, and the fact that
$P_\phi M_\phi=M_{\phi,\star}$ $\mu_X$-almost everywhere can be shown as in the proof
of Lemma \ref{lemma_diagonal}.
\end{proof}

Lemma \ref{lemma_ergodic} furnishes, as a by-product, criteria for the non (unique)
ergodicity of skew products $T_\phi$ which will be useful in applications. To state
them, we need some additional notations. We denote by
$[\;\!\cdot\;\!,\;\!\cdot\;\!]_\g:\g\times\g\to\g$ the Lie bracket of $\g$, we write
$$
z(\g):=\big\{Z\in\g\mid[X,Z]_\g=0~\hbox{for all $X\in\g$}\big\}
$$
for the center of the Lie algebra $\g$, and we write $\big(\g^{\Ad}\big)^\perp$ for
the orthocomplement of $\g^{\Ad}$ in $\g$.

\begin{Theorem}[Non ergodicity of $T_\phi$]\label{theorem_non_ergodic}
Assume that $\phi\in C^1(X,G)$ and that $\phi$ has nonzero degree $P_\phi M_\phi$,
that is, $P_\phi M_\phi\ne0$ on a set $X_0\subset X$ with $\mu_X(X_0)>0$.
\begin{enumerate}
\item[(a)] If $\int_X\d\mu_X(x)\,M_\phi(x)\in\big(\g^{\Ad}\big)^\perp$, then $T_\phi$
is not uniquely ergodic.
\item[(b)] If $G$ is connected and $z(\g)=\{0\}$, then $T_\phi$ is not uniquely
ergodic.
\item[(c)] If the assumptions of (a) or (b) are satisfied and if $F_1$ is uniquely
ergodic, then $T_\phi$ is not ergodic.
\end{enumerate}
\end{Theorem}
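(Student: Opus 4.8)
The plan is to obtain all three parts from Lemma~\ref{lemma_ergodic}, together with the fact recorded just before Lemma~\ref{lemma_diagonal} that $P_{\Ad}$ is the orthogonal projection of $\g$ onto $\g^{\Ad}$. For part~(a) I would argue by contradiction: if $T_\phi$ were uniquely ergodic then, since $\phi\in C^1(X,G)$, Lemma~\ref{lemma_ergodic} would give $P_\phi M_\phi=M_{\phi,\star}=P_{\Ad}\big(\int_X\d\mu_X(x)\,M_\phi(x)\big)$ $\mu_X$-almost everywhere. Since $P_{\Ad}$ vanishes on $(\g^{\Ad})^\perp$, the hypothesis $\int_X\d\mu_X(x)\,M_\phi(x)\in(\g^{\Ad})^\perp$ forces $M_{\phi,\star}=0$, hence $P_\phi M_\phi=0$ $\mu_X$-almost everywhere, contradicting the standing assumption that $P_\phi M_\phi\ne0$ on a set of positive measure. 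For part~(b) I would reduce to~(a): differentiating the identity $\Ad_{\e^{tX}}Z=Z$ at $t=0$, for $Z\in\g^{\Ad}$ and $X\in\g$, gives $[X,Z]_\g=0$ for all $X$, so $\g^{\Ad}\subseteq z(\g)$ (an equality when $G$ is connected); hence $z(\g)=\{0\}$ implies $\g^{\Ad}=\{0\}$, so $(\g^{\Ad})^\perp=\g$ and the hypothesis of~(a) holds trivially.

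For part~(c) I would repeat the proof of Lemma~\ref{lemma_ergodic} with ``ergodic'' and Birkhoff's pointwise ergodic theorem in place of ``uniquely ergodic'' and unique ergodicity. Assume $T_\phi$ is ergodic with respect to $\mu_X\otimes\mu_G$. The $\g$-valued function $\widetilde f(x,g):=\Ad_gM_\phi(x)$ lies in $\lone(X\times G,\g)$ (as in Lemma~\ref{lemma_P_phi}(b)), and by the identity \eqref{eq_W_and_tilde} together with Lemma~\ref{lemma_P_phi}(c) one has $\frac1N\sum_{n=0}^{N-1}\widetilde f\big(T_\phi^n(x,g)\big)\to\Ad_g(P_\phi M_\phi)(x)$ in $\g$ for $\mu_X$-almost every $x$ and every $g\in G$; on the other hand, Birkhoff's pointwise ergodic theorem for Banach-valued functions \cite[Thm.~4.2.1]{Kre85}, applied to the ergodic transformation $T_\phi$, gives $\frac1N\sum_{n=0}^{N-1}\widetilde f\big(T_\phi^n(x,g)\big)\to\int_{X\times G}\widetilde f\,\d(\mu_X\otimes\mu_G)=M_{\phi,\star}$ for $(\mu_X\otimes\mu_G)$-almost every $(x,g)$. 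Comparing the two limits via Fubini's theorem, using that $g\mapsto\Ad_g(P_\phi M_\phi)(x)$ is continuous and $\mu_G$ has full support, and then specialising to $g=e_G$, I obtain $P_\phi M_\phi=M_{\phi,\star}$ $\mu_X$-almost everywhere; under the hypotheses of~(a) or of~(b) this quantity equals $0$ (by the computations above), contradicting the nonzero-degree assumption, so $T_\phi$ is not ergodic. Alternatively, invoking the hypothesis that $F_1$ is uniquely ergodic, one may use the classical fact that a compact group extension that is ergodic over a uniquely ergodic base is uniquely ergodic, and then quote~(a)--(b) directly.

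I do not expect any step to be a serious obstacle: parts~(a) and~(b) are essentially immediate corollaries of Lemma~\ref{lemma_ergodic}, and part~(c) merely rephrases its proof. The one point requiring a little care is the passage in~(c) from ``$\Ad_g(P_\phi M_\phi)(x)=M_{\phi,\star}$ for almost every $(x,g)$'' to ``$P_\phi M_\phi=M_{\phi,\star}$ almost everywhere'', which is why the Fubini-and-continuity argument is spelled out above; this is exactly the analogue of the corresponding step in the proof of Lemma~\ref{lemma_ergodic} (there carried out through the scalar functions $f_v$ and the specialisation to $g=e_G$).
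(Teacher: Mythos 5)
Your parts (a) and (b) follow the paper's proof essentially verbatim: (a) is the contradiction with Lemma~\ref{lemma_ergodic} and $P_{\Ad}\!\upharpoonright_{(\g^{\Ad})^\perp}=0$, and (b) reduces to (a) via $\g^{\Ad}\subseteq z(\g)$ (the paper quotes $\g^{\Ad}=z(\g)$ for $G$ connected from \cite{DK00}, but your one-sided inclusion, obtained by differentiating $\Ad_{\e^{tX}}Z=Z$, already suffices).

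For part (c), your primary argument is a genuinely different and more self-contained route. The paper simply invokes the equivalence of ergodicity and unique ergodicity for such skew products over a uniquely ergodic base \cite[Thm.~4.21]{EW11} and then quotes (a)--(b). You instead rerun the proof of Lemma~\ref{lemma_ergodic} with Birkhoff's pointwise ergodic theorem replacing unique ergodicity, then use Fubini, continuity of $g\mapsto\Ad_g(P_\phi M_\phi)(x)$, and the full support of $\mu_G$ to pass from an a.e.\ identity in $(x,g)$ to $P_\phi M_\phi=M_{\phi,\star}$ $\mu_X$-a.e.; this correctly yields the contradiction. A notable feature is that this direct argument never uses the hypothesis that $F_1$ is uniquely ergodic, so it in fact establishes a marginally stronger statement (ergodicity of $T_\phi$ alone already forces $P_\phi M_\phi=M_{\phi,\star}$ a.e.), whereas the paper's citation of \cite{EW11} does rely on that hypothesis. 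Your last sentence, offering the paper's own reduction as an alternative, is also accurate. In short: all three parts are correct, (a)--(b) coincide with the paper, and your main argument for (c) is an independent, slightly stronger, and arguably more elementary proof.
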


\begin{proof}
(a) Suppose by absurd that $T_\phi$ is uniquely ergodic. Then, the inclusion
$\int_X\d\mu_X(x)\,M_\phi(x)\in\big(\g^{\Ad}\big)^\perp$ and \eqref{eq_M_phi_star_Ad}
imply for $\mu_X$-almost every $x\in X_0$ that
$$
0
\ne\big(P_\phi M_\phi\big)(x)
=M_{\phi,\star}
=P_{\Ad}\left(\int_X\d\mu_X(x)\,M_\phi(x)\right)
=0,
$$
which is a contradiction. Therefore, $T_\phi$ is not uniquely ergodic.

(b) If $G$ is connected, then the exponential map $\exp:\g\to G$ is surjective and
$\g^{\Ad}=z(\g)$ \cite[Eq.~3.1.13]{DK00}. Since $z(\g)=\{0\}$, this implies that
$\big(\g^{\Ad}\big)^\perp=\g$. Thus
$\int_X\d\mu_X(x)\,M_\phi(x)\in\big(\g^{\Ad}\big)^\perp$, and the claim follows from
point (a).

(c) The claim follows from the fact that unique ergodicity and ergodicity are
equivalent properties for $T_\phi$ if $F_1$ is uniquely ergodic (see
\cite[Thm.~4.21]{EW11}).
\end{proof}

\begin{Remark}\label{remark_semisimple}
(a) If $G$ is connected, $\phi\in C^1(X,G)$, and $T_\phi$ is uniquely ergodic, then
$\g^{\Ad}=z(\g)$ and \eqref{eq_M_phi_star_Ad} implies that the degree
$P_\phi M_\phi=M_{\phi,\star}$ belongs to the center $z(\g)$ of the Lie algebra $\g:$
\begin{equation}\label{eq_in_center}
M_{\phi,\star}=P_{\Ad}\left(\int_X\d\mu_X(x)\,M_\phi(x)\right)\in z(\g).
\end{equation}
Furthermore, if $(\d\pi)_{e_G}$ is surjective, then it is known that the differential
$(\d\pi)_{e_G}$ maps $z(\g)$ into the center $z(\g_\pi)$ of $\g_\pi$. So,
\eqref{eq_in_center} implies that $(\d\pi)_{e_G}(M_{\phi,\star})\in z(\g_\pi)$.

(b) If $G$ is a connected and semisimple, then we have $z(\g)=\{0\}$
\cite[Prop.~1.13]{Kna02}. Thus, Theorem \ref{theorem_non_ergodic}(b) implies that
there does not exist uniquely ergodic skew products $T_\phi$ with $\phi\in C^1(X,G)$
and nonzero degree if $G$ is a connected semisimple compact Lie group. This applies
for example in the cases $G=\SU(n)$ and $G=\SO(n+1,\R)$ for $n\ge2$. On the hand, if
the assumptions of Theorem \ref{theorem_non_ergodic}(a)-(b) are not satisfied, then
skew products $T_\phi$ with $\phi\in C^1(X,G)$ and nonzero degree can be uniquely
ergodic. This occurs for example in the case $G$ is a torus (see
\cite[Thm.~2.1]{Fur61}, \cite[Cor.~3]{Iwa97_2}, \cite[p.~8-9]{Tie15_3}, and Section
\ref{section_torus}) or $G=\U(2)$ (see Section \ref{section_U(2)}).
\end{Remark}

Using Lemmas \ref{lemma_diagonal} and \ref{lemma_ergodic}, we obtain the following
corollaries of Theorems \ref{thm_mixing_D_pi} and \ref{thm_absolute_D_pi}.

\begin{Corollary}\label{cor_mixing_D_pi}
Assume that $\phi\in C^1(X,G)$ and suppose either that $F_1$ is uniquely ergodic and
$\pi\circ\phi$ is diagonal, or that $T_\phi$ is uniquely ergodic. Then, the strong
limit $D_{\phi,\pi}$ exists and is equal to $i\;\!(\d\pi)_{e_G}(M_{\phi,\star})$, and
\begin{enumerate}
\item[(a)] $\lim_{N\to\infty}\big\langle\varphi,
\big(U_{\phi,\pi,j}\big)^N\psi\big\rangle=0$ for each
$\varphi\in\ker(D_{\phi,\pi})^\perp$ and $\psi\in\H^{(\pi)}_j$,
\item[(b)] $U_{\phi,\pi,j}\big|_{\ker(D_{\phi,\pi})^\perp}$ has purely continuous
spectrum.
\end{enumerate}
\end{Corollary}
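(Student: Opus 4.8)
The plan is to derive Corollary \ref{cor_mixing_D_pi} as a direct combination of Theorem \ref{thm_mixing_D_pi} with Lemma \ref{lemma_diagonal} (in the first case) and with Lemma \ref{lemma_ergodic} (in the second case). First I would observe that the blanket hypothesis $\phi\in C^1(X,G)$ supplies the precise regularity needed downstream: $\L_Y\phi$ exists everywhere, $M_\phi=\L_Y\phi\cdot\phi^{-1}\in C(X,\g)\subset\ltwo(X,\g)$, and $\L_Y(\pi\circ\phi)\in C\big(X,\B(\C^{d_\pi})\big)\subset\linf\big(X,\B(\C^{d_\pi})\big)$ because $X$ is compact. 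Hence the assumptions of Lemma \ref{lemma_D_phi_pi}, Lemma \ref{lemma_dif_pi}, and Theorem \ref{thm_mixing_D_pi} are all met. Applying Theorem \ref{thm_mixing_D_pi} directly, the strong limit $D_{\phi,\pi}$ exists, equals the matrix-valued multiplication operator by $i\,(\d\pi)_{e_G}\big((P_\phi M_\phi)(\;\!\cdot\;\!)\big)$ in $\H^{(\pi)}_j$, and assertions (a) and (b) hold verbatim. So the only remaining task is to identify this operator with $i\,(\d\pi)_{e_G}(M_{\phi,\star})$ for the relevant definition of $M_{\phi,\star}$.

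For this identification I would argue by uniqueness of limits. In the first case, Lemma \ref{lemma_diagonal} (which applies since $F_1$ is uniquely ergodic and $\pi\circ\phi$ is diagonal) gives $\lim_{N\to\infty}\big\|\frac1N\sum_{n=0}^{N-1}W_{\pi\circ\phi}^nM_{\pi\circ\phi}-(\d\pi)_{e_G}(M_{\phi,\star})\big\|_{\linf(X,\B(\C^{d_\pi}))}=0$ with $M_{\phi,\star}=\int_X\d\mu_X(x)\,M_\phi(x)$; in the second case, Lemma \ref{lemma_ergodic} (which applies since $T_\phi$ is uniquely ergodic) gives the same statement with $M_{\phi,\star}=\int_{X\times G}\d(\mu_X\otimes\mu_G)(x,g)\,\Ad_gM_\phi(x)$. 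In particular, in both cases the sequence $\frac1N\sum_{n=0}^{N-1}\big(W_{\pi\circ\phi}^nM_{\pi\circ\phi}\big)(x)$ converges to the constant $(\d\pi)_{e_G}(M_{\phi,\star})$ for $\mu_X$-almost every $x\in X$. On the other hand, Lemma \ref{lemma_dif_pi} tells us that this same sequence converges $\mu_X$-almost everywhere to $(\d\pi)_{e_G}\big((P_\phi M_\phi)(x)\big)$. Comparing the two limits, $(\d\pi)_{e_G}\big((P_\phi M_\phi)(x)\big)=(\d\pi)_{e_G}(M_{\phi,\star})$ for $\mu_X$-almost every $x\in X$ (in the uniquely ergodic case this is also immediate from the identity $P_\phi M_\phi=M_{\phi,\star}$ stated in Lemma \ref{lemma_ergodic}), hence the multiplication operator $i\,(\d\pi)_{e_G}\big((P_\phi M_\phi)(\;\!\cdot\;\!)\big)$ coincides with the constant multiplication operator $i\,(\d\pi)_{e_G}(M_{\phi,\star})$ on $\H^{(\pi)}_j$. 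Substituting this into the conclusion of Theorem \ref{thm_mixing_D_pi} yields the statement.

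Since every ingredient has already been established, there is no genuine obstacle here; the one point requiring care is purely bookkeeping, namely checking that $\phi\in C^1(X,G)$ does supply the $\ltwo$ and $\linf$ bounds needed to invoke Theorem \ref{thm_mixing_D_pi}, Lemma \ref{lemma_dif_pi} and the relevant one of Lemmas \ref{lemma_diagonal}/\ref{lemma_ergodic}, and keeping track of which of the two formulas for $M_{\phi,\star}$ is in force in each of the two cases.
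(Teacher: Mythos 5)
Your proposal is correct and follows exactly the route of the paper's proof, which simply cites Theorem \ref{thm_mixing_D_pi} together with Lemmas \ref{lemma_diagonal} and \ref{lemma_ergodic}. You have spelled out the bookkeeping (that $\phi\in C^1(X,G)$ yields the $\ltwo$/$\linf$ hypotheses, and that the two candidate limits agree by $\mu_X$-a.e. uniqueness via Lemma \ref{lemma_dif_pi}) that the paper leaves implicit; in particular, comparing pointwise limits is the right way to conclude in the first case, since the Corollary only assumes diagonality for the single $\pi$ in question, whereas the identity $P_\phi M_\phi=M_{\phi,\star}$ in Lemma \ref{lemma_diagonal} requires diagonality for all $\pi\in\widehat G$.
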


\begin{proof}
Direct consequence of Theorem \ref{thm_mixing_D_pi} and Lemmas \ref{lemma_diagonal}
and \ref{lemma_ergodic}.
\end{proof}

\begin{Corollary}\label{cor_absolute_D_pi}
Assume that
\begin{enumerate}
\item[(i)] $\phi\in C^1(X,G)$,
\item[(ii)] $F_1$ is uniquely ergodic and $\pi\circ\phi$ is diagonal, or $T_\phi$ is
uniquely ergodic,
\item[(iii)] $M_{\pi\circ\phi}\in C^{+0}\big(A_{D_{\phi,\pi}}\big)$,
\item[(iv)] $\det\big((\d\pi)_{e_G}(M_{\phi,\star})\big)\ne0$.
\end{enumerate}
Then, $U_{\phi,\pi,j}$ has purely absolutely continuous spectrum.
\end{Corollary}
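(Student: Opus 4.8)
The plan is to obtain Corollary \ref{cor_absolute_D_pi} as a direct application of Theorem \ref{thm_absolute_D_pi}, by verifying its five hypotheses (i)--(v) from the hypotheses (i)--(iv) of the corollary together with Lemmas \ref{lemma_dif_pi}, \ref{lemma_diagonal} and \ref{lemma_ergodic}.

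First, the assumption $\phi\in C^1(X,G)$ makes $\L_Y\phi$ exist everywhere on $X$ and makes $M_\phi=\L_Y\phi\cdot\phi^{-1}$ and $\L_Y(\pi\circ\phi)$ continuous; since $X$ is compact this yields $M_\phi\in C(X,\g)\subset\ltwo(X,\g)$ and $\L_Y(\pi\circ\phi)\in C\big(X,\B(\C^{d_\pi})\big)\subset\linf\big(X,\B(\C^{d_\pi})\big)$, which is hypothesis (i) of Theorem \ref{thm_absolute_D_pi}.

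Next, according to which alternative holds in hypothesis (ii) of the corollary, I would invoke Lemma \ref{lemma_diagonal} (case $F_1$ uniquely ergodic and $\pi\circ\phi$ diagonal) or Lemma \ref{lemma_ergodic} (case $T_\phi$ uniquely ergodic) to conclude that $\frac1N\sum_{n=0}^{N-1}W_{\pi\circ\phi}^nM_{\pi\circ\phi}$ converges in $\linf\big(X,\B(\C^{d_\pi})\big)$ to the constant matrix $(\d\pi)_{e_G}(M_{\phi,\star})$. Comparing this with the pointwise convergence of the same Ces\`aro averages to $(\d\pi)_{e_G}\big((P_\phi M_\phi)(\;\!\cdot\;\!)\big)$ provided by Lemma \ref{lemma_dif_pi}, uniqueness of the limit gives $(\d\pi)_{e_G}\big((P_\phi M_\phi)(x)\big)=(\d\pi)_{e_G}(M_{\phi,\star})$ for $\mu_X$-almost every $x\in X$. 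Hypothesis (ii) of Theorem \ref{thm_absolute_D_pi} is then precisely the $\linf$-convergence just obtained, and hypothesis (iii) is immediate since $(\d\pi)_{e_G}(M_{\phi,\star})$ does not depend on $x$, so the associated Lie derivative vanishes (and in particular lies in $\linf$). Hypothesis (iv) of Theorem \ref{thm_absolute_D_pi} is hypothesis (iii) of the corollary verbatim.

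It remains to deduce hypothesis (v), namely $a_{\phi,\pi}>0$, from hypothesis (iv) of the corollary. Using the almost-everywhere identity $(\d\pi)_{e_G}\big((P_\phi M_\phi)(x)\big)=(\d\pi)_{e_G}(M_{\phi,\star})$, the essential infimum over $x$ in the definition of $a_{\phi,\pi}$ disappears and $a_{\phi,\pi}=\inf_{v\in\C^{d_\pi},\,\|v\|_{\C^{d_\pi}}=1}\big\langle v,\big(i\,(\d\pi)_{e_G}(M_{\phi,\star})\big)^2v\big\rangle_{\C^{d_\pi}}$. Since $(\d\pi)_{e_G}(M_{\phi,\star})\in\g_\pi$ is skew-Hermitian, the matrix $H:=i\,(\d\pi)_{e_G}(M_{\phi,\star})$ is Hermitian, and $a_{\phi,\pi}$ equals the least eigenvalue of $H^2$, i.e.\ the square of the smallest modulus among the eigenvalues of $H$; this is strictly positive exactly when $H$, equivalently $(\d\pi)_{e_G}(M_{\phi,\star})$, is invertible, which is hypothesis (iv) of the corollary. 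With all five hypotheses of Theorem \ref{thm_absolute_D_pi} verified, that theorem gives the purely absolutely continuous spectrum of $U_{\phi,\pi,j}$. No real difficulty arises in this argument; the only points requiring a little care are the uniqueness-of-limits step that turns the two lemmas into the $\linf$-convergence hypothesis with a \emph{constant} limit, and the translation of $a_{\phi,\pi}>0$ into the determinant condition via the Rayleigh quotient of the Hermitian matrix $H$.
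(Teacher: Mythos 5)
Your proof is correct and follows essentially the same strategy as the paper's: verify the five hypotheses of Theorem \ref{thm_absolute_D_pi} using Lemmas \ref{lemma_diagonal} and \ref{lemma_ergodic}, then translate $a_{\phi,\pi}>0$ into the determinant condition via the Rayleigh-quotient/eigenvalue interpretation of the Hermitian matrix $i\,(\d\pi)_{e_G}(M_{\phi,\star})$. You are in fact slightly more careful than the paper at one point: you derive the a.e.\ identity $(\d\pi)_{e_G}\big((P_\phi M_\phi)(x)\big)=(\d\pi)_{e_G}(M_{\phi,\star})$ by comparing the pointwise limit of Lemma \ref{lemma_dif_pi} with the $\linf$-limit of Lemma \ref{lemma_diagonal}/\ref{lemma_ergodic} rather than quoting $P_\phi M_\phi=M_{\phi,\star}$ directly, which in the diagonal case is stated in Lemma \ref{lemma_diagonal} only under the stronger hypothesis that $\pi\circ\phi$ is diagonal for \emph{every} $\pi\in\widehat G$, not just the fixed one in the corollary.
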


\begin{proof}
The proof consists in verifying the assumptions of Theorem \ref{thm_absolute_D_pi}. In
view of Lemmas \ref{lemma_diagonal} and \ref{lemma_ergodic}, it is clear that
assumptions (i)-(iii) imply assumptions (i)-(iv) of Theorem \ref{thm_absolute_D_pi}.
Furthermore, since
$(\d\pi)_{e_G}\big((P_\phi M_\phi)(x)\big)=(\d\pi)_{e_G}(M_{\phi,\star})$ for
$\mu_X$-almost every $x\in X$, it follows from the min-max theorem that
\begin{align*}
a_{\phi,\pi}
&=\essinf_{x\in X}\inf_{v\in\C^{d_\pi},\,\|v\|_{\C^{d_\pi}}=1}\left\langle v,
\big(i\;\!(\d\pi)_{e_G}\big((P_\phi M_\phi)(x)\big)\big)^2v
\right\rangle_{\C^{d_\pi}}\\
&=\inf_{v\in\C^{d_\pi}\!,\,\|v\|_{\C^{d_\pi}}=1}\left\langle v,
\big(i\;\!(\d\pi)_{e_G}(M_{\phi,\star})\big)^2v\right\rangle_{\C^{d_\pi}}\\
&=\hbox{minimal eigenvalue of $\big(i\;\!(\d\pi)_{e_G}(M_{\phi,\star})\big)^2$.}
\end{align*}
In consequence, assumption (iv) implies that $a_{\phi,\pi}>0$, and thus all
assumptions of Theorem \ref{thm_absolute_D_pi} are satisfied.
\end{proof}

\begin{Remark}\label{remark_kernel}
When $D_{\phi,\pi}$ is equal to $i\;\!(\d\pi)_{e_G}(M_{\phi,\star})$ (as in
Corollaries \ref{cor_mixing_D_pi} and \ref{cor_absolute_D_pi}), one has an explicit
formula for $\ker(D_{\phi,\pi})$. Indeed, let $q_{\phi,\pi}\in\U(d_\pi)$ be the
unitary matrix which diagonalises the hermitian matrix
$i\;\!(\d\pi)_{e_G}(M_{\phi,\star})$, that is,
$$
q_{\phi,\pi}\cdot i\;\!(\d\pi)_{e_G}(M_{\phi,\star})\cdot q_{\phi,\pi}^{-1}
=\diag\big(\lambda_1,\ldots,\lambda_{d_\pi}\big)
$$
with $\lambda_1,\ldots,\lambda_{d_\pi}$ the eigenvalues of
$i\;\!(\d\pi)_{e_G}(M_{\phi,\star})$. Then, the multiplication operator $Q_{\phi,\pi}$
in $\H^{(\pi)}_j$ given by
\begin{equation}\label{eq_Q_phi_pi}
Q_{\phi,\pi}\sum_{k=1}^{d_\pi}\varphi_k\otimes\pi_{jk}
=\sum_{k,\ell=1}^{d_\pi}(q_\pi)_{k\ell}\;\!\varphi_\ell\otimes\pi_{jk},
\quad\varphi_k\in\ltwo(X,\mu_X),
\end{equation}
is unitary, and a direct calculation shows that
$$
\ker\big(Q_{\phi\pi}D_{\phi,\pi}Q_{\phi,\pi}^{-1}\big)
=\bigoplus_{k\in\{1,\ldots,d_\pi\},\,\lambda_k=0}\ltwo(X,\mu_X)\otimes\{\pi_{jk}\}.
$$
Therefore, we obtain
$$
\ker(D_{\phi,\pi})
=Q_{\phi,\pi}^{-1}\ker\big(Q_{\phi\pi}D_{\phi,\pi}Q_{\phi,\pi}^{-1}\big)
=Q_{\phi,\pi}^{-1}\left\{\bigoplus_{k\in\{1,\ldots,d_\pi\},\,\lambda_k=0}
\ltwo(X,\mu_X)\otimes\{\pi_{jk}\}\right\},
$$
which implies
\begin{align*}
\ker(D_{\phi,\pi})^\perp
&=Q_{\phi,\pi}^{-1}\left\{\bigoplus_{k\in\{1,\ldots,d_\pi\},\,\lambda_k=0}
\ltwo(X,\mu_X)\otimes\{\pi_{jk}\}\right\}^\perp\\
&=Q_{\phi,\pi}^{-1}\left\{\bigoplus_{k\in\{1,\ldots,d_\pi\},\,\lambda_k\ne0}
\ltwo(X,\mu_X)\otimes\{\pi_{jk}\}\right\},
\end{align*}
due to the orthogonality relation \eqref{eq_orthogonality}.
\end{Remark}

To conclude the section, we recall a result in the case of translations on tori
$\T^d:=(\S^1)^d$ ($d\in\N^*$) which follows from \cite[Lemma~3.7]{Tie15_2} and the
proof of \cite[Lemma~4]{ILR93}:

\begin{Lemma}\label{Lemma_Lebesgue}
Assume that $X=\T^d$ ($d\in\N^*$), and let $\{F_t\}_{t\in\R}$ be given
by
$$
F_t(x):=\big(x_1\e^{2\pi it\alpha_1},\ldots,x_d\e^{2\pi it\alpha_d}\big),
\quad t\in\R,~x=(x_1,\ldots,x_d)\in\T^d,
$$
for some $\alpha=(\alpha_1,\ldots,\alpha_d)\in\R^d$ with
$\alpha_{k_0}\in\R\setminus\Q$ for some $k_0\in\{1,\ldots,d\}$. Then, if
$U_{\phi,\pi,j}$ has purely absolutely continuous spectrum, $U_{\phi,\pi,j}$ has
Lebesgue spectrum with uniform countable multiplicity.
\end{Lemma}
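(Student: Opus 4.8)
The plan is to exploit the abundance of symmetries of $U_{\phi,\pi,j}$ coming from multiplication by the characters of the base torus $\T^d$, exactly in the spirit of the proof of \cite[Lemma~4]{ILR93} and \cite[Lemma~3.7]{Tie15_2}. For $m=(m_1,\ldots,m_d)\in\Z^d$ let $\chi_m\in C(\T^d)$ be the character $\chi_m(x):=x_1^{m_1}\cdots x_d^{m_d}$, and let $M_m$ be the operator on $\H^{(\pi)}_j$ given by $M_m\big(\sum_{k=1}^{d_\pi}\varphi_k\otimes\pi_{jk}\big):=\sum_{k=1}^{d_\pi}(\chi_m\varphi_k)\otimes\pi_{jk}$. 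Since $|\chi_m|=1$, each $M_m$ is a well-defined unitary operator on $\H^{(\pi)}_j$ (for any choice of $\mu_X$), and since $\chi_m\circ F_1=\e^{2\pi i\langle m,\alpha\rangle}\chi_m$ with $\langle m,\alpha\rangle:=\sum_k m_k\alpha_k$, the explicit formula \eqref{eq_U_n} for $U_{\phi,\pi,j}$ yields at once the intertwining relation $U_{\phi,\pi,j}M_m=\e^{2\pi i\langle m,\alpha\rangle}M_m U_{\phi,\pi,j}$, that is, $M_m^{-1}U_{\phi,\pi,j}M_m=\e^{2\pi i\langle m,\alpha\rangle}U_{\phi,\pi,j}$ for each $m\in\Z^d$. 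Taking $m=n\,e_{k_0}$ and using $\alpha_{k_0}\in\R\setminus\Q$, I conclude that $U_{\phi,\pi,j}$ is unitarily equivalent to $zU_{\phi,\pi,j}$ for every $z$ in the dense subgroup $\{\e^{2\pi in\alpha_{k_0}}\mid n\in\Z\}$ of $\S^1$.

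Next I would translate this into statements about the spectral data of $U_{\phi,\pi,j}$. If $E$ denotes its spectral measure, then $zU_{\phi,\pi,j}$ has spectral measure $B\mapsto E(\bar zB)$, so the equivalences above give that $E(B)$ and $E(\e^{-2\pi in\alpha_{k_0}}B)$ are unitarily equivalent projections for every Borel $B\subset\S^1$ and every $n\in\Z$; in particular $E(B)=0$ iff $E(\e^{-2\pi in\alpha_{k_0}}B)=0$. Let $\sigma$ be a maximal spectral type of $U_{\phi,\pi,j}$. By the hypothesis of pure absolute continuity, $\sigma=g\,\d\lambda$ for some $g\ge0$ in $\lone(\S^1,\lambda)$, where $\lambda$ is the Haar (Lebesgue) measure on $\S^1$; since $\lambda$ is rotation invariant, the null-set invariance just obtained translates into the assertion that the set $\{g>0\}$ is, modulo $\lambda$-null sets, invariant under the rotation $z\mapsto\e^{2\pi i\alpha_{k_0}}z$. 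As $\alpha_{k_0}$ is irrational this rotation is ergodic for $\lambda$, so $\{g>0\}$ is $\lambda$-null or $\lambda$-conull; since $\H^{(\pi)}_j\ne\{0\}$ forces $\sigma\ne0$, it must be conull, i.e. $g>0$ $\lambda$-almost everywhere and $\sigma\sim\lambda$. Thus $U_{\phi,\pi,j}$ has purely absolutely continuous spectrum with maximal spectral type equivalent to $\lambda$, which is the Lebesgue-spectrum half of the claim.

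Finally I would deal with the uniformity of the multiplicity. By the Hahn--Hellinger classification on the separable space $\H^{(\pi)}_j$, the operator $U_{\phi,\pi,j}$ is determined up to unitary equivalence by $\sigma$ together with a multiplicity function $N:\S^1\to\{1,2,\ldots,\aleph_0\}$ defined $\lambda$-almost everywhere, and $zU_{\phi,\pi,j}$ then has multiplicity function $w\mapsto N(\bar zw)$. Since $U_{\phi,\pi,j}$ and $\e^{2\pi in\alpha_{k_0}}U_{\phi,\pi,j}$ are unitarily equivalent they share the same maximal spectral type and the same multiplicity function, so $N(w)=N(\e^{-2\pi in\alpha_{k_0}}w)$ for $\lambda$-almost every $w$, for each $n\in\Z$; taking $n=1$ and invoking once more the ergodicity of the irrational rotation, $N$ equals $\lambda$-almost everywhere a constant $\mathfrak m\in\{1,2,\ldots,\aleph_0\}$. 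Hence $U_{\phi,\pi,j}$ has Lebesgue spectrum of uniform (and necessarily at most countable) multiplicity $\mathfrak m$, which is the assertion; the precise value of $\mathfrak m$ in the situations of interest is exactly what the computations underlying \cite[Lemma~3.7]{Tie15_2} and \cite[Lemma~4]{ILR93} pin down. The only delicate points in this scheme are bookkeeping ones — applying the Hahn--Hellinger machinery correctly and passing from the family of ``$\lambda$-a.e.'' identities indexed by $n$ (or $m$) to a single a.e. identity for the ergodic rotation $z\mapsto\e^{2\pi i\alpha_{k_0}}z$ — whereas the computational core, namely the intertwining relation $M_m^{-1}U_{\phi,\pi,j}M_m=\e^{2\pi i\langle m,\alpha\rangle}U_{\phi,\pi,j}$, is immediate from the explicit form of $U_{\phi,\pi,j}$.
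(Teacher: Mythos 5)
Your proof is correct and follows exactly the approach of the two cited references (the intertwining relation $M_m^{-1}U_{\phi,\pi,j}M_m=\e^{2\pi i\langle m,\alpha\rangle}U_{\phi,\pi,j}$, followed by ergodicity of the irrational rotation applied first to the density $g$ of the maximal spectral type and then to the Hahn--Hellinger multiplicity function). The paper itself only cites \cite[Lemma~3.7]{Tie15_2} and the proof of \cite[Lemma~4]{ILR93} rather than reproducing the argument, so there is no discrepancy to report.
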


\section{Examples}\label{section_examples}
\setcounter{equation}{0}

In this section, we apply the results of Section \ref{section_cocycles} in some
examples. In each example, we present both general results and some more explicit
results in particular cases. As a first application, we start the simple, but
instructive, case where the cocycle $\phi$ takes values in a torus.

\subsection{Cocycles with values in a torus}\label{section_torus}

Assume that $G=\T^{d'}$ ($d'\in\N^*$). Then, $\g=i\;\!\R^{d'}$, each representation
$\pi^{(q)}\in\widehat{\T^{d'}}$ is a character of $\T^{d'}$ given by
$\pi^{(q)}(y):=y_1^{q_1}\cdots y_{d'}^{q_{d'}}$ for $y=(y_1,\ldots,y_{d'})\in\T^{d'}$
and $q=(q_1,\ldots,q_{d'})\in\Z^{d'}$, and
$$
\H^{(q)}
:=\H^{(\pi^{(q)})}_1
=\ltwo(X,\mu_X)\otimes\big\{\pi^{(q)}\big\}.
$$
If $\phi\in C^1(X,\T^{d'})$, Theorem \ref{thm_mixing_D_pi} implies that the strong
limit
$$
D_{\phi,q}
:=D_{\phi,\pi^{(q)}}
=\slim_{N\to\infty}\frac1N\big[A,\big(U_{\phi,\pi^{(q)},1}\big)^N\big]
\big(U_{\phi,\pi^{(q)},1}\big)^{-N}
$$
exists and is equal to
$i\big(\d\pi^{(q)}\big)_{e'}\big((P_\phi M_\phi)(\;\!\cdot\;\!)\big)$ with
$e'=e_{\;\!\T^{d'}}=(1,\ldots,1)$, and
$\lim_{N\to\infty}\big\langle\varphi,\big(U_{\phi,\pi^{(q)},1}\big)^N\psi\big\rangle=0$
for each $\varphi\in\ker(D_{\phi,q})^\perp$ and $\psi\in\H^{(q)}$. Therefore, $U_\phi$
is mixing in the subspace
\begin{equation}\label{eq_ac_torus_1}
\H_{\rm mix}:=\bigoplus_{q\in\Z^{d'}}\ker(D_{\phi,q})^\perp\subset\H.
\end{equation}

In order to apply Corollaries \ref{cor_mixing_D_pi} and  \ref{cor_absolute_D_pi}, we
make the following additional assumptions: $F_1$ is uniquely ergodic and
\begin{equation}\label{eq_condition}
\int_0^1\frac{\d t}t\,\big\|\L_Y(\pi^{(q)}\circ\phi)\circ F_t
-\L_Y(\pi^{(q)}\circ\phi)\big\|_{\linf(X,\mu_X)}<\infty
\quad\hbox{for each $q\in\Z^{d'}$.}
\end{equation}
Under these assumptions, Corollary \ref{cor_mixing_D_pi} implies that
$$
D_{\phi,q}
=i\big(\d\pi^{(q)}\big)_{e'}(M_{\phi,\star})
=i\int_X\d\mu_X(x)\,M_{\pi^{(q)}\circ\phi}(x)
\in\R
$$
with
$$
M_{\phi,\star}=\int_X\d\mu_X(x)\,M_\phi(x)\in i\;\!\R^{d'}
\quad\hbox{and}\quad
M_{\pi^{(q)}\circ\phi}=\L_Y(\pi^{(q)}\circ\phi)(\pi^{(q)}\circ\phi)^{-1}.
$$
In particular, we have
$$
A_{D_{\phi,q}}=AD_{\phi,q}+D_{\phi,q}A=2AD_{\phi,q},
$$
which implies that $C^{+0}(A)\subset C^{+0}\big(A_{D_{\phi,q}}\big)$. So, in order to
verify the assumption (iii) of Corollary \ref{cor_absolute_D_pi} it is sufficient to
check that $M_{\pi^{(q)}\circ\phi}\in C^{+0}(A)$. But, since $\phi\in C^1(X,\T^{d'})$,
we have $(\pi^{(q)}\circ\phi)^{-1}\in C^1(A)$. So, it is sufficient to check that
$\L_Y(\pi^{(q)}\circ\phi)\in C^{+0}(A)$, which follows directly from
\eqref{eq_condition}. Finally, since
$D_{\phi,q}=i\big(\d\pi^{(q)}\big)_{e'}(M_{\phi,\star})$ is scalar, the assumption (iv)
of Corollary \ref{cor_absolute_D_pi} is equivalent to $D_{\phi,q}\ne0$. So, Corollary
\ref{cor_absolute_D_pi} implies that $U_{\phi,\pi^{(q)},1}$ has purely absolutely
continuous spectrum if $D_{\phi,q}\ne0$, and thus that $U_\phi$ has purely absolutely
continuous spectrum in the subspace
\begin{equation}\label{eq_ac_torus_2}
\H_{\rm ac}
:=\bigoplus_{q\in\Z^{d'}\!,\,D_{\phi,q}\ne0}\H^{(q)}
\subset\H_{\rm mix}.
\end{equation}

These results are new in this generality. They extend similar results in the
particular case $X\times G=\T^d\times\T^{d'}$, $d,d'\in\N^*$ (see for instance 
\cite{Anz51,Cho87,Fra00,Iwa97_1,Iwa97_2,ILR93,Kus74,Tie15_2,Tie15_3}). In that case,
if the flow $\{F_t\}_{t\in\R}$ on $X=\T^d$ is given by
$$
F_t(x):=\big(x_1\e^{2\pi it\alpha_1},\ldots,x_d\e^{2\pi it\alpha_d}\big),
\quad t\in\R,~x=(x_1,\ldots,x_d)\in\T^d,
$$
for some $\alpha=\big(\alpha_1,\ldots,\alpha_d\big)\in\R^d$ with
$\alpha_1,\ldots,\alpha_d,1$ rationally independent, one even obtains Lebesgue
spectrum with uniform countable multiplicity due to Lemma \ref{Lemma_Lebesgue}.

\subsection{Cocycles with values in $\SU(2)$}\label{section_SU(2)}

Assume that
\begin{gather*}
G
=\SU(2)
=\left\{
\begin{pmatrix}
z_1 & z_2\\
-\overline{z_2} & \overline{z_1}
\end{pmatrix}
\mid z_1,z_2\in\C,~|z_1|^2+|z_2|^2=1\right\},\\
\g=\su(2)
=\left\{
\begin{pmatrix}
is & z\\
-\overline z & -is
\end{pmatrix}
\mid s\in\R,~z\in\C\right\},\\
\langle Z_1,Z_2\rangle_{\su(2)}:=\frac12\Tr\big(Z_1Z_2^*\big),\quad Z_1,Z_2\in\su(2).
\end{gather*}
The set $\widehat{\SU(2)}$ of finite-dimensional IUR's of $\SU(2)$ can be described as
follows. For each $\ell\in\N$, let $V_\ell$ be the $(\ell+1)$-dimensional vector space
of homogeneous polinomials of degree $\ell$ in the variables $\omega_1,\omega_2\in\C$.
Endow $V_\ell$ with the basis
$$
p_j(\omega_1,\omega_2):=\omega_1^j\omega_2^{\ell-j},\quad j\in\{0,\ldots,\ell\},
$$
and the scalar product
$\langle\;\!\cdot\;\!,\;\!\cdot\;\!\rangle_{V_\ell}:V_\ell\times V_\ell\to\C$ given by
$$
\left\langle\sum_{j=0}^\ell\alpha_jp_j,\sum_{k=0}^\ell\beta_kp_k\right\rangle_{V_\ell}
:=\sum_{j=0}^\ell j!(q-j)!\;\!\alpha_j\;\!\overline{\beta_j},
\quad\alpha_j,\beta_k\in\C.
$$
Then, the function $\pi^{(\ell)}:\SU(2)\to\U(V_\ell)\simeq\U(\ell+1)$ given by
\begin{equation}\label{eq_rep_SU(2)}
\big(\pi^{(\ell)}(g)p\big)(\omega_1,\omega_2)
:=p\big((\omega_1,\omega_2)g\big)
=p\big(z_1\omega_1-\overline{z_2}\omega_2,z_2\omega_1+\overline{z_1}\omega_2\big)
\end{equation}
for $g=\left(\begin{smallmatrix}z_1&z_2\\-\overline{z_2}&\overline{z_1}
\end{smallmatrix}\right)\in\SU(2)$, $p\in V_\ell$, and $\omega_1,\omega_2\in\C$
defines a $(\ell+1)$-dimensional IUR of $\SU(2)$ on $V_\ell$, and each
finite-dimensional IUR of $\SU(2)$ is unitarily equivalent to an element of the family
$\{\pi^{(\ell)}\}_{\ell\in\N}$ \cite[Prop.~II.1.1 \& Thm.~II.4.1]{Sug90}. A
calculation using the binomial theorem shows that the matrix elements of
$\pi^{(\ell)}$ with respect to the basis $\{p_j\}_{j=0}^\ell$ are given by
\begin{equation}\label{eq_coeff_SU(2)}
\pi_{jk}^{(\ell)}(g)
:=\big\langle\pi^{(\ell)}(g)p_k,p_j\big\rangle_{V_\ell}
=j\;\!!(\ell-j)!\sum_{m=0}^k\sum_{n=0}^{\ell-k}
\begin{pmatrix}
k\\
m
\end{pmatrix}
\begin{pmatrix}
\ell-k\\
n
\end{pmatrix}
z_1^m\;\!\overline{z_1}^{\ell-k-n}z_2^n(-\overline{z_2})^{k-m}\;\!\delta_{j,m+n},
\end{equation}
with $j,k\in\{0,\ldots,\ell\}$ and
$\big(\begin{smallmatrix}\cdot\\\cdot\end{smallmatrix}\big)$ the binomial
coefficients. In particular, in the case of diagonal elements
$
g=
\big(\begin{smallmatrix}
z_1 & 0\\
0 & \overline{z_1}
\end{smallmatrix}\big)
\in\SU(2)
$,
one obtains
\begin{equation}\label{eq_coeff_diagonal}
\pi_{jk}^{(\ell)}
\begin{pmatrix}
z_1 & 0\\
0 & \overline{z_1}
\end{pmatrix}
=j\;\!!(\ell-j)!\;\!z_1^{2j-\ell}\;\!\delta_{j,k}.
\end{equation}

If $\phi\in C^1\big(X,\SU(2)\big)$, Theorem \ref{thm_mixing_D_pi} implies that the
strong limit
$$
D_{\phi,\ell}
:=D_{\phi,\pi^{(\ell)}}
=\slim_{N\to\infty}\frac1N\big[A,\big(U_{\phi,\pi^{(\ell)},j}\big)^N\big]
\big(U_{\phi,\pi^{(\ell)},j}\big)^{-N}
$$
exists and is equal to
$i\big(\d\pi^{(\ell)}\big)_{I_2}\big((P_\phi M_\phi)(\;\!\cdot\;\!)\big)$ with
$I_2:=\left(\begin{smallmatrix}1&0\\0&1\end{smallmatrix}\right)=e_{\;\!\SU(2)}$, and
$
\lim_{N\to\infty}
\big\langle\varphi,\big(U_{\phi,\pi^{(\ell)},j}\big)^N\psi\big\rangle=0
$
for each $\varphi\in\ker(D_{\phi,\ell})^\perp$ and $\psi\in\H^{(\pi^{(\ell)})}_j$.
Therefore, $U_\phi$ is mixing in the subspace
$$
\bigoplus_{\ell\in\N}\;\bigoplus_{j\in\{0,\ldots,\ell\}}\ker(D_{\phi,\ell})^\perp
\subset\H.
$$

Now, since $\SU(2)$ is connected and semisimple, we know from Remark
\ref{remark_semisimple}(b) that if $T_\phi$ uniquely ergodic then $P_\phi M_\phi$ is
zero. So, assuming $T_\phi$ uniquely ergodic would not lead to additional results with
our methods. Therefore, to give a more explicit description of the continuous spectrum
of $U_\phi$, we assume instead that $F_1$ is ergodic. In this case, Remark
\ref{remark_Pf} implies that there exists a constant $\rho_\phi\ge0$ such that
\begin{equation}\label{eq_rho_phi}
\rho_\phi=\big\|\big(P_\phi M_\phi\big)(x)\big\|_{\su(2)}
\quad\hbox{for $\mu_X$-almost every $x\in X$.}
\end{equation}
Moreover, the proof of Theorem 2.4 of \cite{Fra00_2} implies the following useful
result (just replace in the proof of Theorem 2.4 of \cite{Fra00_2} the group $\T$ by
the manifold $X$):

\begin{Lemma}\label{lemma_cohomologous_SU(2)}
Assume that $\phi\in C^1\big(X,\SU(2)\big)$, that $F_1$ is ergodic, and that
$\rho_\phi>0$. Then, $\phi$ is cohomologous to a diagonal cocycle. More precisely, if
$\big(P_\phi M_\phi\big)(x)$ is written as
$$
\big(P_\phi M_\phi\big)(x)
=\begin{pmatrix}
ia(x) & b(x)+ic(x)\\
-b(x)+ic(x) & -ia(x)
\end{pmatrix},
\quad a(x),b(x),c(x)\in\R,
$$
for $\mu_X$-almost every $x\in X$, and if $\zeta:X\to\SU(2)$ is the measurable
function given by
$$
\zeta(x)
:=\begin{cases}
\begin{pmatrix}i\sqrt{\frac{\rho_\phi+a(x)}{2\rho_\phi}}
\frac{b(x)-ic(x)}{|b(x)+ic(x)|}
&\sqrt{\frac{\rho_\phi-a(x)}{2\rho_\phi}}\vspace{2pt}\\
-\sqrt{\frac{\rho_\phi-a(x)}{2\rho_\phi}}
&-i\sqrt{\frac{\rho_\phi+a(x)}{2\rho_\phi}}
\frac{b(x)+ic(x)}{|b(x)+ic(x)|}\end{pmatrix}
& \hbox{if $|a(x)|\ne \rho_\phi$,}\medskip\\
\hfil\begin{pmatrix}0&-1\\1&0\end{pmatrix} & \hbox{if $a(x)=-\rho_\phi$,}\medskip\\
\hfil\begin{pmatrix}1&0\\0&1\end{pmatrix} & \hbox{if $a(x)=\rho_\phi$,}
\end{cases}
$$
then
$
\delta(x):=\zeta(x)\phi(x)(\zeta\circ F_1)(x)^{-1}\in\SU(2)
$
is diagonal for $\mu_X$-almost every $x\in X$.
\end{Lemma}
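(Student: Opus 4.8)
The plan is to follow the proof of \cite[Thm.~2.4]{Fra00_2} with the circle $\T$ replaced by the manifold $X$, exploiting the invariance properties of $P_\phi M_\phi$ collected in Remark \ref{remark_Pf}. First I would recall from Lemma \ref{lemma_P_phi} that $P_\phi M_\phi\in\ltwo\big(X,\su(2)\big)$ is well defined, and from Remark \ref{remark_Pf}, using that $F_1$ is ergodic and $\rho_\phi>0$, that for $\mu_X$-almost every $x\in X$ one has \emph{both} $\big\|(P_\phi M_\phi)(x)\big\|_{\su(2)}=\rho_\phi$ \emph{and} the $W_\phi$-invariance relation $\Ad_{\phi(x)}(P_\phi M_\phi\circ F_1)(x)=(P_\phi M_\phi)(x)$ (the case $n=1$ of the displayed identity in Remark \ref{remark_Pf}). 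Writing $(P_\phi M_\phi)(x)=\bigl(\begin{smallmatrix}ia(x)&b(x)+ic(x)\\-b(x)+ic(x)&-ia(x)\end{smallmatrix}\bigr)$ and noting that $\|Z\|_{\su(2)}^2=a^2+b^2+c^2$ for $Z=\bigl(\begin{smallmatrix}ia&b+ic\\-b+ic&-ia\end{smallmatrix}\bigr)\in\su(2)$, the norm condition reads $a(x)^2+b(x)^2+c(x)^2=\rho_\phi^2$ for $\mu_X$-almost every $x$; in particular $b(x)^2+c(x)^2=\rho_\phi^2-a(x)^2>0$ whenever $|a(x)|\ne\rho_\phi$, so the formula defining $\zeta$ makes sense, and $\zeta$ is measurable because $a,b,c$ are measurable and the three defining sets $\{a=\rho_\phi\}$, $\{a=-\rho_\phi\}$, $\{|a|\ne\rho_\phi\}$ are measurable.

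Next I would establish, by a direct matrix computation carried out separately on each of these three sets and using $a^2+b^2+c^2=\rho_\phi^2$, the two algebraic facts that drive the argument: (i) $\zeta(x)\in\SU(2)$, that is, $|z_1(x)|^2+|z_2(x)|^2=1$; and (ii) the diagonalisation identity $\Ad_{\zeta(x)}(P_\phi M_\phi)(x)=\rho_\phi E$, where $E:=\bigl(\begin{smallmatrix}i&0\\0&-i\end{smallmatrix}\bigr)$, valid for $\mu_X$-almost every $x$. On the set $\{a=\pm\rho_\phi\}$ one has $b=c=0$, hence $(P_\phi M_\phi)(x)=\pm\rho_\phi E$, and it suffices to check that the constant matrix $\zeta(x)=I_2$ on $\{a=\rho_\phi\}$ and $\zeta(x)=\bigl(\begin{smallmatrix}0&-1\\1&0\end{smallmatrix}\bigr)$ on $\{a=-\rho_\phi\}$ satisfy (i)--(ii); the generic case $|a(x)|\ne\rho_\phi$ requires the explicit formula for $\zeta$ and is where the bulk of the computation sits --- conceptually it just says that $\Ad:\SU(2)\to\SO(3,\R)$ is the double cover and that $\zeta(x)$ is a measurable lift of the rotation sending the unit vector $\rho_\phi^{-1}(a(x),b(x),c(x))$ to $(1,0,0)$.

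Finally I would assemble the pieces. Since $F_1$ preserves $\mu_X$, identity (ii) also holds at $F_1(x)$ for $\mu_X$-almost every $x$, so $(P_\phi M_\phi)(x)=\Ad_{\zeta(x)^{-1}}(\rho_\phi E)$ and $(P_\phi M_\phi\circ F_1)(x)=\Ad_{(\zeta\circ F_1)(x)^{-1}}(\rho_\phi E)$. Substituting both expressions into the $W_\phi$-invariance relation and using that $\Ad$ is a group homomorphism gives $\Ad_{\delta(x)}(\rho_\phi E)=\rho_\phi E$ with $\delta(x)=\zeta(x)\phi(x)(\zeta\circ F_1)(x)^{-1}$; since $\rho_\phi>0$ this forces $\delta(x)E=E\,\delta(x)$, hence $\delta(x)$ is diagonal, for $\mu_X$-almost every $x$. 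Because $\phi=\zeta^{-1}\delta\,(\zeta\circ F_1)$, this exhibits $\phi$ as cohomologous to the diagonal cocycle $\delta$. The main obstacle is bookkeeping rather than conceptual: carrying out the matrix computations (i)--(ii) in the generic case with the explicit $\zeta$ while keeping straight the conventions for $\SU(2)$, for the identification $\su(2)\cong\R^3$, and for the direction of the $\Ad$-action. Note that, thanks to the $W_\phi$-invariance argument, one never needs the $C^1$-cohomology transformation rule of Lemma \ref{lemma_inv_degrees}(b), which in any case does not apply directly here since the transfer function $\zeta$ is only measurable.
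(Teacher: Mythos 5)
Your proposal is correct and follows the same route the paper takes, which is simply to transplant the proof of \cite[Thm.~2.4]{Fra00_2} from $\T$ to $X$; you have correctly identified the three ingredients that make that transplant work, namely the constancy of $\|(P_\phi M_\phi)(\cdot)\|_{\su(2)}$ from Remark \ref{remark_Pf} (using ergodicity of $F_1$), the $W_\phi$-invariance relation $\Ad_{\phi(x)}(P_\phi M_\phi\circ F_1)(x)=(P_\phi M_\phi)(x)$, and the pointwise diagonalisation $\Ad_{\zeta(x)}(P_\phi M_\phi)(x)=\rho_\phi\left(\begin{smallmatrix}i&0\\0&-i\end{smallmatrix}\right)$. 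Your closing observation that the $C^1$-cohomology rule of Lemma \ref{lemma_inv_degrees}(b) is neither needed nor applicable (the transfer function $\zeta$ is only measurable) is also apt.
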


Remark \ref{remark_cohomologous} and Lemma \ref{lemma_cohomologous_SU(2)} imply that
the Koopman operators $U_\phi$ and $U_\delta$ are unitarily equivalent, and thus have
the same spectral properties. But, since the cocycle $\delta:X\to\SU(2)$ is diagonal,
it is more convenient to work with $U_\delta$. Indeed, since $\delta:X\to\SU(2)$ is
diagonal, \eqref{eq_coeff_diagonal} implies that $\pi^{(\ell)}\circ\delta$ is diagonal
for each $\ell\in\N$. Therefore, we can use Lemma \ref{lemma_diagonal} to determine
the explicit form of the function $P_\delta M_\delta:$

\begin{Lemma}\label{lemma_diagonal_delta}
Assume that $\phi,\zeta\in C^1\big(X,\SU(2)\big)$, that $F_1$ is uniquely ergodic, and
that $\rho_\phi>0$. Then,
$$
\lim_{N\to\infty}\left\|\frac1N\sum_{n=0}^{N-1}
W_{\pi^{(\ell)}\circ\delta}^nM_{\pi^{(\ell)}\circ\delta}
-\big(\d\pi^{(\ell)}\big)_{I_2}(M_{\delta,\star})\right\|_{\linf(X;\B(\C^{\ell+1}))}=0
$$
with $M_{\delta,\star}:=\int_X\d\mu_X(x)\,M_\delta(x)$. Furthermore,
$
P_\delta M_\delta
=M_{\delta,\star}
=\left(\begin{smallmatrix}
i\rho_\phi & 0\\
0 & -i\rho_\phi
\end{smallmatrix}\right)
$
$\mu_X$-almost everywhere.
\end{Lemma}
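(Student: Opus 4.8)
The plan is to reduce the statement to Lemma~\ref{lemma_diagonal} applied to the diagonal cocycle $\delta$, and then to pin down the constant $M_{\delta,\star}$ by combining the relation of $C^1$-cohomology between $\phi$ and $\delta$ with the explicit form of the transfer function $\zeta$ of Lemma~\ref{lemma_cohomologous_SU(2)}.

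First I would let $\zeta$ and $\delta$ be as in Lemma~\ref{lemma_cohomologous_SU(2)} (whose hypotheses hold, since unique ergodicity of $F_1$ implies ergodicity and $\rho_\phi>0$ is assumed). The assumption $\zeta\in C^1\big(X,\SU(2)\big)$ then gives $\delta=\zeta\cdot\phi\cdot(\zeta\circ F_1)^{-1}\in C^1\big(X,\SU(2)\big)$, and Lemma~\ref{lemma_cohomologous_SU(2)} says $\delta$ is diagonal $\mu_X$-almost everywhere. By \eqref{eq_coeff_diagonal}, $\pi^{(\ell)}\circ\delta$ is then diagonal $\mu_X$-almost everywhere for every $\ell\in\N$, i.e.\ for every element of $\widehat{\SU(2)}$. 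Applying Lemma~\ref{lemma_diagonal} with $\phi$ replaced by $\delta$ and $\pi$ by $\pi^{(\ell)}$ (note $e_G=I_2$) yields at once the displayed $\linf$-convergence with $M_{\delta,\star}=\int_X\d\mu_X(x)\,M_\delta(x)$, and, since \emph{every} $\pi^{(\ell)}\circ\delta$ is diagonal, the ``furthermore'' clause of Lemma~\ref{lemma_diagonal} gives $P_\delta M_\delta=M_{\delta,\star}$ $\mu_X$-almost everywhere.

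It then remains to identify $M_{\delta,\star}$ explicitly. Rewriting the relation of Lemma~\ref{lemma_cohomologous_SU(2)} as $\phi(x)=\zeta(x)^{-1}\delta(x)(\zeta\circ F_1)(x)$, Lemma~\ref{lemma_inv_degrees}(b) gives $P_\delta M_\delta=\Ad_\zeta(P_\phi M_\phi)$ $\mu_X$-almost everywhere. With $\big(P_\phi M_\phi\big)(x)$ written as in Lemma~\ref{lemma_cohomologous_SU(2)} in terms of $a(x),b(x),c(x)$, the equality \eqref{eq_rho_phi} forces $a(x)^2+b(x)^2+c(x)^2=\rho_\phi^2$ for $\mu_X$-almost every $x$; inserting the explicit matrix $\zeta(x)$ from Lemma~\ref{lemma_cohomologous_SU(2)}, one checks by a direct computation that $\Ad_{\zeta(x)}\big((P_\phi M_\phi)(x)\big)=\left(\begin{smallmatrix}i\rho_\phi&0\\0&-i\rho_\phi\end{smallmatrix}\right)$; equivalently, that the first column of $\zeta(x)^{-1}=\zeta(x)^*$ is a unit eigenvector of $(P_\phi M_\phi)(x)$ for the eigenvalue $+i\rho_\phi$, the identity $|b(x)+ic(x)|=\sqrt{\rho_\phi^2-a(x)^2}$ being what makes the square-root expressions in $\zeta(x)$ collapse. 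The two exceptional cases $a(x)=\pm\rho_\phi$ are handled separately from the corresponding two lines of the definition of $\zeta$. This proves $P_\delta M_\delta=M_{\delta,\star}=\left(\begin{smallmatrix}i\rho_\phi&0\\0&-i\rho_\phi\end{smallmatrix}\right)$ $\mu_X$-almost everywhere.

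The only genuine obstacle is this last matrix computation: it is purely mechanical, but one must track the $\SU(2)$ sign and ordering conventions carefully so that the eigenvalue $+i\rho_\phi$ ends up in the $(1,1)$ entry rather than the $(2,2)$ entry, and the two boundary cases $a(x)=\pm\rho_\phi$ must be checked by hand. Everything else is a direct application of Lemmas~\ref{lemma_cohomologous_SU(2)}, \ref{lemma_diagonal} and \ref{lemma_inv_degrees}(b).
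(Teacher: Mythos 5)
Your first half matches the paper exactly: both you and the author apply Lemma~\ref{lemma_diagonal} with $\phi$ replaced by $\delta$ (noting $\delta\in C^1$ and $\pi^{(\ell)}\circ\delta$ diagonal for every $\ell$) to get the displayed $\linf$-limit and $P_\delta M_\delta=M_{\delta,\star}$ $\mu_X$-almost everywhere.

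Where you diverge is in pinning down $M_{\delta,\star}$. The paper takes a lighter route: since $M_\delta(x)$ is diagonal and lies in $\su(2)$, the integral is automatically of the form $\operatorname{diag}(is,-is)$ with $s\in\R$, and then the author invokes the norm identity \eqref{eq_equal_norms} from Lemma~\ref{lemma_inv_degrees}(b) together with \eqref{eq_rho_phi} to conclude $s^2=\rho_\phi^2$, whence $s=\rho_\phi$. Your route instead uses the operator form of Lemma~\ref{lemma_inv_degrees}(b), $P_\delta M_\delta=\Ad_\zeta(P_\phi M_\phi)$, and the explicit matrix $\zeta(x)$ from Lemma~\ref{lemma_cohomologous_SU(2)}, computing $\Ad_{\zeta(x)}\big((P_\phi M_\phi)(x)\big)$ directly. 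Both are valid, but they trade off differently. The paper's norm argument is short, avoids any matrix algebra, and sidesteps the case split $a(x)=\pm\rho_\phi$; however, it really only yields $s=\pm\rho_\phi$ and does not by itself determine the sign (the step ``$s^2=\rho_\phi^2$, $\rho_\phi>0$, hence $s=\rho_\phi$'' is slightly elliptical). Your explicit conjugation would actually resolve the sign — the construction of $\zeta$ in Lemma~\ref{lemma_cohomologous_SU(2)} is precisely what forces $+i\rho_\phi$ into the $(1,1)$ entry — at the cost of the mechanical computation you rightly flag but do not carry out. So your proposal is correct and, if completed, marginally more rigorous on that one point; the paper's proof is the more economical variant.
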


\begin{proof}
Since $\phi,\zeta\in C^1\big(X,\SU(2)\big)$, we have
$\delta\in C^1\big(X,\SU(2)\big)$. So, we can apply Lemma \ref{lemma_diagonal} with
$\phi$ replaced by $\delta$ to obtain that
$$
P_\delta M_\delta=M_{\delta,\star}=\int_X\d\mu_X(x)\,M_\delta(x)
$$
$\mu_X$-almost everywhere. Now, since $M_\delta(x)$ is diagonal and belongs to
$\su(2)$ for each $x\in X$, there exists $s\in\R$ such that
$$
\int_X\d\mu_X(x)\,M_\delta(x)
=\begin{pmatrix}
is & 0\\
0 & -is
\end{pmatrix}.
$$
Therefore, it only remains to show that $s=\rho_\phi$ to conclude. For this, we note
that \eqref{eq_equal_norms} and \eqref{eq_rho_phi} imply for $\mu_X$-almost every
$x\in X$ that
$$
(\rho_\phi)^2
=\big\|\big(P_\phi M_\phi\big)(x)\big\|_{\su(2)}^2
=\big\|\big(P_\delta M_\delta\big)(x)\big\|_{\su(2)}^2
=\frac12\Tr\left(
\begin{pmatrix}
is & 0\\
0 & -is
\end{pmatrix}
\begin{pmatrix}
is & 0\\
0 & -is
\end{pmatrix}^*
\right)
=s^2.
$$
Since $\rho_\phi>0$, we thus obtain that $s=\rho_\phi$ as desired.
\end{proof}

Equation \eqref{eq_coeff_diagonal} and Lemma \ref{lemma_diagonal_delta} imply that
$$
D_{\delta,\ell}
:=D_{\delta,\pi^{(\ell)}}
=\slim_{N\to\infty}\frac1N\big[A,\big(U_{\delta,\pi^{(\ell)},j}\big)^N\big]
\big(U_{\delta,\pi^{(\ell)},j}\big)^{-N}
=i\big(\d\pi^{(\ell)}\big)_{I_2}
\left(\begin{pmatrix}
i\rho_\phi & 0\\
0 & -i\rho_\phi
\end{pmatrix}\right)
$$
with
$$
i\big(\d\pi^{(\ell)}\big)_{I_2}
\left(\begin{pmatrix}
i\rho_\phi & 0\\
0 & -i\rho_\phi
\end{pmatrix}\right)_{j,k}
=i\;\!\frac\d{\d t}\Big|_{t=0}\;\!\pi^{(\ell)}_{j,k}
\left(\e^{t\left(\begin{smallmatrix}
i\rho_\phi & 0\\
0 & -i\rho_\phi
\end{smallmatrix}\right)}\right)
=j\;\!!(\ell-j)!\;\!\rho_\phi(\ell-2j)\;\!\delta_{j,k}.
$$
So, we deduce from Remark \ref{remark_kernel} that
\begin{align}
\ker(D_{\delta,\ell})^\perp
&=\bigoplus_{k\in\{0,\ldots,\ell\},\,2k\ne\ell}
\ltwo(X,\mu_X)\otimes\big\{\pi^{(\ell)}_{jk}\big\}\nonumber\\
&=\begin{cases}
\displaystyle\bigoplus_{k\in\{0,\ldots,\ell\}\setminus\{\ell/2\}}
\ltwo(X,\mu_X)\otimes\big\{\pi^{(\ell)}_{jk}\big\}
& \hbox{if $\ell\in2\N$,}\\
\hfil\H^{(\pi^{(\ell)})}_j & \hbox{if $\ell\in2\N+1$.}
\end{cases}\label{eq_kernel_delta}
\end{align}

Combining what precedes with Corollary \ref{cor_mixing_D_pi}, we obtain the following
result for the mixing property of the Koopman operator $U_\delta:$

\begin{Theorem}\label{thm_mixing_U_delta}
Assume that $\phi,\zeta\in C^1\big(X,\SU(2)\big)$, that $F_1$ is uniquely ergodic, and
that $\rho_\phi>0$. Then,
$$
\lim_{N\to\infty}
\big\langle\varphi,\big(U_{\delta,\pi^{(\ell)},j}\big)^N\psi\big\rangle=0
\quad\hbox{for each $\varphi\in\ker(D_{\delta,\ell})^\perp$ and
$\psi\in\H^{(\pi^{(\ell)})}_j$,}
$$
with $\ker(D_{\delta,\ell})^\perp$ given by \eqref{eq_kernel_delta}. In particular,
$U_\delta$ is mixing in the subspace
$$
\H_{\rm mix}
:=\left(\bigoplus_{\ell\in2\N}\;\bigoplus_{j\in\{0,\ldots,\ell\}}\;
\bigoplus_{k\in\{0,\ldots,\ell\}\setminus\{\ell/2\}}\ltwo(X,\mu_X)\otimes
\big\{\pi^{(\ell)}_{jk}\big\}\right)
\bigoplus
\left(\bigoplus_{\ell\in2\N+1}\;\bigoplus_{j\in\{0,\ldots,\ell\}}\;
\H^{(\pi^{(\ell)})}_j\right)
\subset\H.
$$
\end{Theorem}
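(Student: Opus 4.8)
The plan is to deduce Theorem~\ref{thm_mixing_U_delta} from Corollary~\ref{cor_mixing_D_pi} applied to the diagonal cocycle $\delta$ furnished by Lemma~\ref{lemma_cohomologous_SU(2)}, together with the explicit description of $P_\delta M_\delta$ obtained in Lemma~\ref{lemma_diagonal_delta}. First I would observe that $\delta=\zeta\;\!\phi\;\!(\zeta\circ F_1)^{-1}$ belongs to $C^1\big(X,\SU(2)\big)$, by the product rule in Lie groups and the hypothesis $\phi,\zeta\in C^1\big(X,\SU(2)\big)$; it is worth stressing here that the transfer function $\zeta$ produced by Lemma~\ref{lemma_cohomologous_SU(2)} is only measurable in general, so the $C^1$-regularity of $\zeta$ is a genuine extra assumption of the theorem and is exactly what makes $\delta$ a $C^1$ cocycle. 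Since $\delta(x)$ is diagonal for $\mu_X$-almost every $x\in X$, formula~\eqref{eq_coeff_diagonal} shows that $\pi^{(\ell)}\circ\delta$ is diagonal for every $\ell\in\N$; together with the unique ergodicity of $F_1$, this puts us in the situation of Corollary~\ref{cor_mixing_D_pi} with $\phi$ replaced by $\delta$. That corollary then yields the existence of the strong limit $D_{\delta,\ell}$, the identity $D_{\delta,\ell}=i\big(\d\pi^{(\ell)}\big)_{I_2}(M_{\delta,\star})$, and the convergence $\lim_{N\to\infty}\big\langle\varphi,\big(U_{\delta,\pi^{(\ell)},j}\big)^N\psi\big\rangle=0$ for each $\varphi\in\ker(D_{\delta,\ell})^\perp$ and $\psi\in\H^{(\pi^{(\ell)})}_j$.

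Next I would insert the value $M_{\delta,\star}=\left(\begin{smallmatrix}i\rho_\phi&0\\0&-i\rho_\phi\end{smallmatrix}\right)$ from Lemma~\ref{lemma_diagonal_delta} and evaluate $D_{\delta,\ell}$ on the basis $\{p_k\}_{k=0}^{\ell}$ of $V_\ell$ using~\eqref{eq_coeff_diagonal}: this exhibits $D_{\delta,\ell}$ as the diagonal matrix-valued multiplication operator whose $k$-th diagonal entry is a nonzero multiple of $\rho_\phi(\ell-2k)$, which vanishes if and only if $\ell=2k$ (hence only when $\ell$ is even, for the single index $k=\ell/2$). Since $D_{\delta,\ell}$ is already diagonal, the diagonalising unitary $q_{\delta,\pi^{(\ell)}}$ of Remark~\ref{remark_kernel} may be taken to be the identity, and that remark together with the orthogonality relation~\eqref{eq_orthogonality} gives $\ker(D_{\delta,\ell})^\perp=\bigoplus_{k:\,2k\ne\ell}\ltwo(X,\mu_X)\otimes\{\pi^{(\ell)}_{jk}\}$, i.e.\ precisely~\eqref{eq_kernel_delta}: all of $\H^{(\pi^{(\ell)})}_j$ when $\ell$ is odd, and the codimension-one subspace omitting the index $k=\ell/2$ when $\ell$ is even. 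Combined with the previous paragraph, this proves the first displayed assertion of the theorem.

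For the ``in particular'' statement, I would reorganise~\eqref{eq_kernel_delta} over all $\ell\in\N$ and all $j\in\{0,\ldots,\ell\}$, which identifies $\H_{\rm mix}$ with the orthogonal direct sum $\bigoplus_{\ell\in\N}\bigoplus_{j\in\{0,\ldots,\ell\}}\ker(D_{\delta,\ell})^\perp$. Because $U_\delta$ is reduced by the decomposition~\eqref{eq_decompo}, for a fixed $\varphi\in\ker(D_{\delta,\ell})^\perp\subset\H^{(\pi^{(\ell)})}_j$ and an arbitrary $\psi\in\H$ one has $\big\langle\varphi,U_\delta^N\psi\big\rangle=\big\langle\varphi,\big(U_{\delta,\pi^{(\ell)},j}\big)^N\psi_{\ell,j}\big\rangle$, with $\psi_{\ell,j}$ the orthogonal projection of $\psi$ onto $\H^{(\pi^{(\ell)})}_j$, and this tends to $0$ by the first part of the theorem. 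A standard $\varepsilon/3$ argument — approximating a general $\varphi\in\H_{\rm mix}$ by a finite sum of components lying in finitely many of the subspaces $\ker(D_{\delta,\ell})^\perp$, and using the uniform bound $\big|\big\langle\varphi,U_\delta^N\psi\big\rangle\big|\le\|\varphi\|\,\|\psi\|$ — then upgrades this to $\lim_{N\to\infty}\big\langle\varphi,U_\delta^N\psi\big\rangle=0$ for every $\varphi\in\H_{\rm mix}$ and $\psi\in\H$, which is the asserted mixing of $U_\delta$ in $\H_{\rm mix}$. The only step requiring any care is this last one, where one passes from the subspace-by-subspace convergence supplied by Corollary~\ref{cor_mixing_D_pi} to convergence on the infinite orthogonal sum $\H_{\rm mix}$; everything else is an assembly of results already established.
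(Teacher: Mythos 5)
Your proposal is correct and follows the paper's own (largely implicit) proof: apply Corollary~\ref{cor_mixing_D_pi} to the $C^1$ diagonal cocycle $\delta$, use Lemma~\ref{lemma_diagonal_delta} and formula~\eqref{eq_coeff_diagonal} to evaluate $D_{\delta,\ell}$ explicitly, and read off~\eqref{eq_kernel_delta} via Remark~\ref{remark_kernel}. The only content you add beyond the paper's one-line assembly is the careful $\varepsilon/3$ passage to the infinite orthogonal sum $\H_{\rm mix}$, which the paper leaves implicit but which is indeed the point requiring some care.
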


Using the unitary equivalence of $U_\phi$ and $U_\delta$, one deduces from Theorem
\ref{thm_mixing_U_delta} a criterion for the mixing property of the Koopman operator
$U_\phi$ (just apply Lemma \ref{lemma_inv_degrees}(d)). We refer to
\cite[Thm.~5.3]{Fra00_2} for a related result of K. Fr{\c{a}}czek in the particular
case $X=\T$ and
$$
F_t(x):=x\e^{2\pi it\alpha},\quad t\in\R,~x\in\T,~\alpha\in\R\setminus\Q.
$$

Under an additional regularity assumption of Dini-type on the functions
$\L_Y(\pi^{(\ell)}\circ\phi)$ and $\L_Y(\pi^{(\ell)}\circ\zeta)$, one can show that
$U_\delta$ has purely absolutely continuous spectrum in an appropriate subspace of
$\H_{\rm mix}:$

\begin{Theorem}\label{thm_absolute_U_delta}
Assume that $\phi,\zeta\in C^1\big(X,\SU(2)\big)$, that $F_1$ is uniquely ergodic,
that $\rho_\phi>0$, and that
\begin{equation}\label{eq_Dini_1}
\int_0^1\frac{\d t}t\,\big\|\L_Y(\pi^{(\ell)}\circ\phi)_{jk}\circ F_t
-\L_Y(\pi^{(\ell)}\circ\phi)_{jk}\big\|_{\linf(X,\mu_X)}<\infty
\end{equation}
and
\begin{equation}\label{eq_Dini_2}
\int_0^1\frac{\d t}t\,\big\|\L_Y(\pi^{(\ell)}\circ\zeta)_{jk}\circ F_t
-\L_Y(\pi^{(\ell)}\circ\zeta)_{jk}\big\|_{\linf(X,\mu_X)}<\infty
\end{equation}
for each $\ell\in2\N+1$ and $j,k\in\{0,\ldots,\ell\}$. Then, $U_\delta$ has purely
absolutely continuous spectrum in the subspace
$$
\H_{\rm ac}
:=\bigoplus_{\ell\in2\N+1}\;\bigoplus_{j\in\{0,\ldots,\ell\}}\H^{(\pi^{(\ell)})}_j
\subset\H_{\rm mix}.
$$
\end{Theorem}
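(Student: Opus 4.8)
The plan is to apply Corollary~\ref{cor_absolute_D_pi} to the operator $U_{\delta,\pi^{(\ell)},j}$ for each odd $\ell$ and each $j\in\{0,\ldots,\ell\}$, and then to sum the resulting statements over $\ell$ and $j$.

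First I would dispose of the easy hypotheses of that corollary. Since $\phi,\zeta\in C^1\big(X,\SU(2)\big)$ and $F_1$ is the time-one map of a $C^1$ flow, the cocycle $\delta=\zeta\,\phi\,(\zeta\circ F_1)^{-1}$ produced by Lemma~\ref{lemma_cohomologous_SU(2)} lies in $C^1\big(X,\SU(2)\big)$; by that same lemma $\delta(x)$ is diagonal for $\mu_X$-almost every $x$, so \eqref{eq_coeff_diagonal} shows that $\pi^{(\ell)}\circ\delta$ is diagonal for every $\ell$. Together with the hypothesis that $F_1$ is uniquely ergodic, this gives hypotheses (i)--(ii) of Corollary~\ref{cor_absolute_D_pi}. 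For hypothesis (iv) I would use Lemma~\ref{lemma_diagonal_delta}, which gives $M_{\delta,\star}=\left(\begin{smallmatrix}i\rho_\phi&0\\0&-i\rho_\phi\end{smallmatrix}\right)$, together with the computation of $D_{\delta,\ell}$ recorded just before \eqref{eq_kernel_delta}: it shows that $\big(\d\pi^{(\ell)}\big)_{I_2}(M_{\delta,\star})=-iD_{\delta,\ell}$ is the diagonal matrix with entries $-i\,k!(\ell-k)!\,\rho_\phi(\ell-2k)$, $k\in\{0,\ldots,\ell\}$. Since $\rho_\phi>0$ and $\ell-2k\neq0$ for every such $k$ when $\ell$ is odd, the determinant of this matrix is nonzero, which is hypothesis (iv).

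The substantive step is hypothesis (iii), $M_{\pi^{(\ell)}\circ\delta}\in C^{+0}\big(A_{D_{\delta,\ell}}\big)$, where $A_{D_{\delta,\ell}}$ is the self-adjoint operator attached to $D_{\delta,\ell}$ by Lemma~\ref{Lemma_A_B}(a). Here $D_{\delta,\ell}=i\big(\d\pi^{(\ell)}\big)_{I_2}(M_{\delta,\star})$ is a constant diagonal matrix-multiplication operator which, by the previous paragraph, commutes with $A$ and is invertible for $\ell$ odd; so, in the same spirit as the torus computation of Section~\ref{section_torus}, $A_{D_{\delta,\ell}}$ reduces to $2AD_{\delta,\ell}$, and on the summand $\ltwo(X,\mu_X)\otimes\{\pi^{(\ell)}_{jk}\}$ the group $\e^{itA_{D_{\delta,\ell}}}$ acts as $\e^{2it\lambda_k H}$ with $\lambda_k=k!(\ell-k)!\,\rho_\phi(\ell-2k)\neq0$. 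Because $\pi^{(\ell)}\circ\delta$ is diagonal, $M_{\pi^{(\ell)}\circ\delta}$ is a diagonal matrix-multiplication operator, so a change of variable $s=2\lambda_k t$ in the defining Dini integral reduces $M_{\pi^{(\ell)}\circ\delta}\in C^{+0}\big(A_{D_{\delta,\ell}}\big)$ to $M_{\pi^{(\ell)}\circ\delta}\in C^{+0}(A)$. To get the latter I would write $\pi^{(\ell)}\circ\delta=(\pi^{(\ell)}\circ\zeta)\,(\pi^{(\ell)}\circ\phi)\,\big((\pi^{(\ell)}\circ\zeta)\circ F_1\big)^{-1}$ and apply the product rule: every entry of $\L_Y(\pi^{(\ell)}\circ\delta)$ is a finite sum of products of matrix entries of $\pi^{(\ell)}\circ\zeta$, $\pi^{(\ell)}\circ\phi$, $(\pi^{(\ell)}\circ\zeta)\circ F_1$ and their inverses---which, as multiplication operators, lie in $C^1(A)\subset C^{+0}(A)$ since $\phi,\zeta\in C^1$---each multiplied by exactly one factor of the form $\L_Y(\pi^{(\ell)}\circ\zeta)_{jk}$, $\L_Y(\pi^{(\ell)}\circ\phi)_{jk}$, or $\big(\L_Y(\pi^{(\ell)}\circ\zeta)_{jk}\big)\circ F_1$, which lie in $C^{+0}(A)$ by \eqref{eq_Dini_2}, \eqref{eq_Dini_1}, and again \eqref{eq_Dini_2} (using that the $\linf(X,\mu_X)$-norm is invariant under composition with the measure-preserving map $F_1$). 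Since $C^{+0}(A)$ is stable under multiplication by elements of $C^1(A)$ \cite[Prop.~5.2.3(b)]{ABG96}, the entries of $\L_Y(\pi^{(\ell)}\circ\delta)$, hence those of $M_{\pi^{(\ell)}\circ\delta}=\L_Y(\pi^{(\ell)}\circ\delta)\cdot(\pi^{(\ell)}\circ\delta)^{-1}$, lie in $C^{+0}(A)$, which establishes hypothesis (iii).

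Once all hypotheses of Corollary~\ref{cor_absolute_D_pi} are verified, the corollary yields that $U_{\delta,\pi^{(\ell)},j}$ has purely absolutely continuous spectrum for every odd $\ell$ and every $j\in\{0,\ldots,\ell\}$; summing over these indices gives that $U_\delta$ has purely absolutely continuous spectrum in $\H_{\rm ac}=\bigoplus_{\ell\in2\N+1}\bigoplus_{j\in\{0,\ldots,\ell\}}\H^{(\pi^{(\ell)})}_j$, and the inclusion $\H_{\rm ac}\subset\H_{\rm mix}$ is immediate because, by \eqref{eq_kernel_delta}, $\ker(D_{\delta,\ell})^\perp=\H^{(\pi^{(\ell)})}_j$ for $\ell$ odd. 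I expect the only real obstacle to be the bookkeeping in hypothesis (iii)---tracking which factor of the product rule carries the Lie derivative and then invoking the multiplicativity of $C^{+0}(A)$ together with the reduction from $A_{D_{\delta,\ell}}$ to $A$; the rest consists of assembling facts already established.
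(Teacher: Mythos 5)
Your proposal is correct and follows essentially the same route as the paper's proof: apply Corollary \ref{cor_absolute_D_pi} with $\phi$ replaced by $\delta$, dispose of hypotheses (i)--(ii) via Lemma \ref{lemma_cohomologous_SU(2)} and \eqref{eq_coeff_diagonal}, verify the determinant condition (iv) using $\ell-2k\neq0$ for odd $\ell$, and reduce (iii) to $M_{\pi^{(\ell)}\circ\delta}\in C^{+0}(A)$ via $A_{D_{\delta,\ell}}=2AD_{\delta,\ell}$ and then to \eqref{eq_Dini_1}--\eqref{eq_Dini_2} by the product rule and stability of $C^{+0}(A)$ under multiplication by $C^1(A)$ elements. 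The only cosmetic difference is that you spell out the change-of-variable argument behind the inclusion $C^{+0}(A)\subset C^{+0}(A_{D_{\delta,\ell}})$, which the paper merely asserts.
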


\begin{proof}
We apply Corollary \ref{cor_absolute_D_pi} with $\phi$ replaced by $\delta$. For this,
we first note that assumptions (i)-(ii) of Corollary \ref{cor_absolute_D_pi} are
trivially satisfied. For assumption (iii), we have for each $\ell\in\N$ and
$j\in\{0,\ldots,\ell\}$
$$
A_{D_{\delta,\ell}}=AD_{\delta,\ell}+D_{\delta,\ell}A=2AD_{\delta,\ell},
$$
with $D_{\delta,\ell}$ the multiplication operator by the constant diagonal matrix
$$
i\big(\d\pi^{(\ell)}\big)_{I_2}
\left(\begin{pmatrix}
i\rho_\phi & 0\\
0 & -i\rho_\phi
\end{pmatrix}\right)
=\rho_\phi
\begin{pmatrix}
0\;\!!(\ell-0)!\;\!(\ell-2\cdot0) &  & 0\\
 & \ddots & \\
0 & & \ell\;\!!(\ell-\ell)!\;\!(\ell-2\cdot\ell)
\end{pmatrix}.
$$
Thus, $C^{+0}(A)\subset C^{+0}\big(A_{D_{\delta,\ell}}\big)$ and it is sufficient to
check that
$$
M_{\pi^{(\ell)}\circ\delta}
=\L_Y(\pi^{(\ell)}\circ\delta)\cdot(\pi^{(\ell)}\circ\delta)^{-1}\in C^{+0}(A).
$$
Now, $\pi^{(\ell)}\circ\delta\in C^1\big(X;\B(\C^{\ell+1})\big)$ since
$\phi,\zeta\in C^1\big(X,\SU(2)\big)$. So,
$(\pi^{(\ell)}\circ\delta)^{-1}\in C^1(A)\subset C^{+0}(A)$ (see
\cite[Prop.~5.1.6(a)]{ABG96}), and we only have to check that
$\L_Y(\pi^{(\ell)}\circ\delta)\in C^{+0}(A)$ (see \cite[Prop.~5.2.3(b)]{ABG96}). But,
since
$$
\pi^{(\ell)}\circ\delta
=(\pi^{(\ell)}\circ\zeta)\cdot(\pi^{(\ell)}\circ\phi)
\cdot(\pi^{(\ell)}\circ\zeta\circ F_1)^{-1},
$$
we have
\begin{align*}
&\L_Y(\pi^{(\ell)}\circ\delta)\\
&=\L_Y(\pi^{(\ell)}\circ\zeta)\cdot(\pi^{(\ell)}\circ\phi)
\cdot(\pi^{(\ell)}\circ\zeta\circ F_1)^{-1}+(\pi^{(\ell)}\circ\zeta)
\cdot\L_Y(\pi^{(\ell)}\circ\phi)\cdot(\pi^{(\ell)}\circ\zeta\circ F_1)^{-1}\\
&\quad-(\pi^{(\ell)}\circ\zeta)\cdot(\pi^{(\ell)}\circ\phi)
\cdot(\pi^{(\ell)}\circ\zeta\circ F_1)^{-1}\cdot\L_Y(\pi^{(\ell)}\circ\zeta\circ F_1)
\cdot(\pi^{(\ell)}\circ\zeta\circ F_1)^{-1}
\end{align*}
with
$
(\pi^{(\ell)}\circ\phi),(\pi^{(\ell)}\circ\zeta),
(\pi^{(\ell)}\circ\zeta\circ F_1)^{-1}\in C^1(A)
$. 
Thus, in the end it is sufficient to check that $\L_Y(\pi^{(\ell)}\circ\zeta)$,
$\L_Y(\pi^{(\ell)}\circ\phi)$ and $\L_Y(\pi^{(\ell)}\circ\zeta\circ F_1)$ belong to
$C^{+0}(A)$, which follows from \eqref{eq_Dini_1}-\eqref{eq_Dini_2}.

Finally, we have for each $\ell\in2\N+1$ and $j\in\{0,\ldots,\ell\}$
\begin{align*}
\det\left(i\big(\d\pi^{(\ell)}\big)_{I_2}(M_{\delta,\star})\right)
&=\det\left(\rho_\phi
\begin{pmatrix}
0\;\!!(\ell-0)!\;\!(\ell-2\cdot0) &  & 0\\
 & \ddots & \\
0 & & \ell\;\!!(\ell-\ell)!\;\!(\ell-2\cdot\ell)
\end{pmatrix}\right)\\
&=(\rho_\phi)^{\ell+1}\prod_{j=0}^\ell j\;\!!(\ell-j)!\;\!(\ell-2j)\\
&\ne0.
\end{align*}
Therefore, the last assumption of Corollary \ref{cor_absolute_D_pi} is satisfied, and
thus the claim follows from Corollary \ref{cor_absolute_D_pi}.
\end{proof}

Using the unitary equivalence of $U_\phi$ and $U_\delta$, one deduces from Theorem
\ref{thm_absolute_U_delta} the absolute continuity of the continuous spectrum of the
Koopman operator $U_\phi$ in the subspace
$$
\H_{\rm ac}
=\bigoplus_{\ell\in2\N+1}\;\bigoplus_{j\in\{0,\ldots,\ell\}}\H^{(\pi^{(\ell)})}_j
\subset\H_{\rm mix}.
$$
This result extends a similar result of K. Fr{\c{a}}czek (see
\cite[Cor.~6.5]{Fra00_2}) in the particular case $X=\T$ and
$$
F_t(x):=x\e^{2\pi it\alpha},\quad t\in\R,~x\in\T,~\alpha\in\R\setminus\Q.
$$

\subsection{Cocycles with values in $\SO(3,\R)$}\label{section_SO(3,R)}

Let $\GL(3,\R)$ be the set of $3\times3$ invertible real matrices, let
$I_3:=\left(\begin{smallmatrix}1&0&0\\0&1&0\\0&0&1\end{smallmatrix}\right)$, and
assume that
\begin{gather*}
G=\SO(3,\R)=\left\{g\in\GL(3,\R)\mid g^\top g=I_3,~\det(g)=1\right\},\\
\g=\so(3,\R)=\left\{Z\in\GL(3,\R)\mid Z^\top=-Z\right\},\\
\langle Z_1,Z_2\rangle_{\so(3,\R)}:=\frac12\Tr\big(Z_1Z_2^\top\big),
\quad Z_1,Z_2\in\so(3,\R).
\end{gather*}
The set $\widehat{\SO(3,\R)}$ of finite-dimensional IUR's of $\SO(3,\R)$ can be
described as follows. For each $\ell\in\N$, let $W_\ell$ be the
$\frac12(\ell+1)(\ell+2)$-dimensional vector space of homogeneous polinomials of
degree $\ell$ in the variables $x_1,x_2,x_3\in\R$, and let $H_\ell$ be the
$(2\ell+1)$-dimensional vector space of harmonic polynomials of degree $\ell$, that
is,
$$
H_\ell:=\big\{f\in W_\ell\mid\triangle f=0\big\}
$$
with $\triangle$ the Laplacian on $\R^3$. Endow $H_\ell$ with the scalar product
$$
\big\langle f_1,f_2\big\rangle_{H_\ell}
:=\int_{\S^2}\d\omega\,f_1(\omega)\overline{f_2(\omega)},
$$
where $\d\omega$ is the normalised Lebesgue measure on the $2$-sphere $\S^2$ (that is,
$\d\omega=(4\pi)^{-1}\sin\theta\;\!\d\theta\;\!\d\varphi$ in spherical coordinates).
Then, the family $\{\Upsilon_{\ell,j}\}_{j=-\ell}^\ell\subset H_\ell$ of harmonic
polynomials given in spherical coordinates by
$$
\Upsilon_{\ell,j}(r,\theta,\varphi):=r^\ell\e^{ij\varphi}\Theta_{\ell,j}(\theta),
\quad r\ge0,~\theta\in[0,\pi],~\varphi\in[0,2\pi),
$$
with
$$
\begin{cases}
\displaystyle
\Theta_{\ell,j}(\theta)
:=(-1)^j\sqrt{\frac{(2\ell+1)(\ell-j)!}{(\ell+j)!}}\sin^j\theta\;\!
\frac{\d^j}{\d(\cos\theta)^j}\;\!P_\ell(\cos\theta)\smallskip\\
\hfil\Theta_{\ell,-j}(\theta):=(-1)^j\Theta_{\ell,j}(\theta)
\end{cases}
\quad(j\ge0)
$$
and
$$
P_\ell(\cos\theta)
:=\frac1{2^\ell\;\!\ell!}\frac{\d^\ell}{\d(\cos\theta)^\ell}
\big(\cos^2\theta-1\big)^\ell,
$$
is an orthonormal basis of $H_\ell$ \cite[Sec.~B.10]{Mes61}. Furthermore, the function
$\pi^{(\ell)}:\SO(3,\R)\to\U(H_\ell)\simeq\U(2\ell+1)$ given by
$$
\big(\pi^{(\ell)}(g)f\big)(x):=f(g^{-1}x),
\quad g\in\SO(3,\R),~f\in H_\ell,~x\in\R^3,
$$
defines a $(2\ell+1)$-dimensional IUR of $\SO(3,\R)$ on $H_\ell$, and each
finite-dimensional IUR of $\SO(3,\R)$ is unitarily equivalent to an element of the
family $\{\pi^{(\ell)}\}_{\ell\in\N}$ (see \cite[Thm.~II.7.2]{Sug90} and
\cite[Sec.~IX.1-2]{Vil68}). Moreover, if $\SO(3,\R)$ is parameterised by the Euler
angles, that is, if the elements $g\in\SO(3,\R)$ are written as a product of three
rotations
$$
g
=\{\alpha,\beta,\gamma\}
:=\begin{pmatrix}
\cos\alpha & \sin\alpha & 0\\
-\sin\alpha & \cos\alpha & 0\\
0 & 0 & 1
\end{pmatrix}
\begin{pmatrix}
\cos\beta & 0 & -\sin\beta\\
0 & 1 & 0\\
\sin\beta & 0 & \cos\beta
\end{pmatrix}
\begin{pmatrix}
\cos\gamma & \sin\gamma & 0\\
-\sin\gamma & \cos\gamma & 0\\
0 & 0 & 1
\end{pmatrix}
$$
with $\alpha,\gamma\in[0,2\pi]$ and $\beta\in[0,\pi]$, then the matrix elements of
$\pi^{(\ell)}$ with respect to the basis $\{\Upsilon_{\ell,j}\}_{j=-\ell}^\ell$ are
given by \cite[Eq.~15.5 \& 15.27]{Wig59}
\begin{align*}
&\pi_{jk}^{(\ell)}(\{\alpha,\beta,\gamma\})\\
&:=\big\langle\pi^{(\ell)}(\{\alpha,\beta,\gamma\})\Upsilon_{\ell,k},
\Upsilon_{\ell,j}\big\rangle_{H_\ell}\\
&=\sum_m(-1)^m\frac{\sqrt{(\ell+k)!\;\!(\ell-k)!\;\!(\ell+j)!\;\!(\ell-j)!}}
{(\ell-j-m)!\;\!(\ell+k-m)!\;\!m!\;\!(m+j-k)!}\e^{i(j\alpha+k\gamma)}
\cos^{2\ell+k-j-2m}(\beta/2)\sin^{2m+j-k}(\beta/2)
\end{align*}
with $j,k\in\{-\ell,\ldots,\ell\}$ and the sum over the values $m\in\N$ for which the
factorial arguments are equal or greater than zero. In the particular case where $g$
is a rotation of angle $\alpha$ about the $x_3$-axis, that is, $g=\{\alpha,0,0\}$, one
obtains \cite[Eq.~15.6]{Wig59}
\begin{equation}\label{eq_SO(3,R)_diagonal}
\pi_{jk}^{(\ell)}(\{\alpha,0,0\})=\e^{ij\alpha}\delta_{jk}.
\end{equation}

If $\phi\in C^1\big(X,\SO(3,\R)\big)$, Theorem \ref{thm_mixing_D_pi} implies that the
strong limit
$$
D_{\phi,\ell}
:=D_{\phi,\pi^{(\ell)}}
=\slim_{N\to\infty}\frac1N\big[A,\big(U_{\phi,\pi^{(\ell)},j}\big)^N\big]
\big(U_{\phi,\pi^{(\ell)},j}\big)^{-N}
$$
exists and is equal to
$i\big(\d\pi^{(\ell)}\big)_{I_3}\big((P_\phi M_\phi)(\;\!\cdot\;\!)\big)$, and
$
\lim_{N\to\infty}\big\langle\varphi,
\big(U_{\phi,\pi^{(\ell)},j}\big)^N\psi\big\rangle=0
$
for each $\varphi\in\ker(D_{\phi,\ell})^\perp$ and $\psi\in\H^{(\pi^{(\ell)})}_j$.
Therefore, $U_\phi$ is mixing in the subspace
$$
\bigoplus_{\ell\in\N}\;\bigoplus_{j\in\{-\ell,\ldots,\ell\}}
\ker(D_{\phi,\ell})^\perp\subset\H.
$$

Now, since $\SO(3,\R)$ is a connected and semisimple, we know from Remark
\ref{remark_semisimple}(b) that there is no uniquely ergodic skew product $T_\phi$
with $\phi\in C^1\big(X,\SO(3,\R)\big)$ and nonzero degree $P_\phi M_\phi$ (even
though there are $T_\phi$ with $\phi\in C^1\big(X,\SO(3,\R)\big)$ which are uniquely
ergodic, see \cite{Eli02,Hou11,Ner88}). So, assuming $T_\phi$ uniquely ergodic does
not lead to additional results with our methods. On another hand, one can assume that
$F_1$ is uniquely ergodic and $\pi\circ\phi$ diagonal, and then apply Lemma
\ref{lemma_diagonal}, Corollary \ref{cor_mixing_D_pi} and Corollary
\ref{cor_absolute_D_pi} to get a more explicit description of the continuous spectrum
of $U_\phi$. Thanks to \eqref{eq_SO(3,R)_diagonal}, this can be done for example in
the case the cocycle $\phi$ takes values in the maximal torus $T$ given by rotations
about the $x_3$-axis:
$$
T:=\big\{\{\alpha,0,0\}\mid\alpha\in[0,2\pi]\big\}\subset\SO(3,\R).
$$
We leave the details of the calculations in this case to the reader.

\subsection{Cocycles with values in $\U(2)$}\label{section_U(2)}

As a final example, we consider the case where the cocycle $\phi$ takes values in the
group $\U(2)$. But, we emphasize that one could perfectly go on with examples since
the technics of this article can a priori be applied to cocycles $\phi$ taking values
in any compact Lie group.

Assume that
\begin{gather*}
G
=\U(2)
=\left\{
\begin{pmatrix}
z_1 & z_2\\
-\e^{i\theta}\overline{z_2} & \e^{i\theta}\overline{z_1}
\end{pmatrix}
\mid\theta\in[0,2\pi),~z_1,z_2\in\C,~|z_1|^2+|z_2|^2=1\right\},\\
\g=\fraku(2)
=\left\{
\begin{pmatrix}
is_1 & z\\
-\overline z & is_2
\end{pmatrix}
\mid s_1,s_2\in\R,~z\in\C\right\},\\
\langle Z_1,Z_2\rangle_{\fraku(2)}
:=\frac12\Tr\big(Z_1Z_2^*\big),\quad Z_1,Z_2\in\fraku(2).
\end{gather*}
The set $\widehat{\U(2)}$ of finite-dimensional IUR's of $\U(2)$ can be described as
follows. The map
$$
\T\times\SU(2)\ni(z,g)\mapsto zg\in\U(2)
$$
is an epimorphism with kernel $\{(1,I_2),(-1,-I_2)\}$. Therefore, the elements of
$\widehat{\U(2)}$ are given (up to unitary equivalence) as tensors products
$\{\rho_{2m-\ell}\otimes\pi^{(\ell)}\}_{\ell\in\N,m\in\Z}$, with
$\pi^{(\ell)}:\SU(2)\to\U(V_\ell)\simeq\U(\ell+1)$ the IUR of $\SU(2)$ on $V_\ell$
defined in \eqref{eq_rep_SU(2)} and $\rho_m:\T\to\U(1)=\T$ the IUR of $\S^1$ on $\C$
defined by
$$
\rho_m(z)\;\!\omega:=z^m\omega,\quad z\in\T,~\omega\in\C.
$$
(The indices $2m-\ell$ and $\ell$ add to $2m$ so that $(-1,-I_2)$ belongs to the
kernel of $\rho_{2m-\ell}\otimes\pi^{(\ell)}$, see \cite[Sec.~II.5]{BtD85}). This
implies that the set $\widehat{\U(2)}$ coincides with the family
$\{\pi^{(\ell,m)}\}_{\ell\in\N,m\in\Z}$ of representations
$\pi^{(\ell,m)}:\U(2)\to\U(\C\otimes V_\ell)\simeq\U(\ell+1)$ given by
$$
\pi^{(\ell,m)}(zg):=\rho_{2m-\ell}(z)\otimes\pi^{(\ell)}(g),
\quad z\in\T,~g\in\SU(2).
$$
The matrix elements of $\pi^{(\ell,m)}$ with respect to the basis
$\{1\otimes p_j\}_{j=0}^\ell$ of $\C\otimes V_\ell$ are given by
\begin{align}
\pi^{(\ell,m)}_{jk}(zg)
&:=\big\langle\big(\rho_{2m-\ell}(z)\otimes\pi^{(\ell)}(g)\big)(1\otimes p_k),
1\otimes p_j\big\rangle_{\C\otimes V_\ell}\nonumber\\
&=\big\langle z^{2m-\ell},1\big\rangle_\C\;\!
\big\langle\pi^{(\ell)}(g)p_k,p_j\big\rangle_{V_\ell}\nonumber\\
&=z^{2m-\ell}\;\!\pi_{jk}^{(\ell)}(g),\label{eq_coeff_U(2)}
\end{align}
with $j,k\in\{0,\ldots,\ell\}$, $z\in\T$, $g\in\SU(2)$, and $\pi_{jk}^{(\ell)}(g)$ as
in \eqref{eq_coeff_SU(2)}.

If $\phi\in C^1\big(X;\U(2)\big)$, Theorem \ref{thm_mixing_D_pi} implies that the
strong limit
$$
D_{\phi,\ell,m}
:=D_{\phi,\pi^{(\ell,m)}}
=\slim_{N\to\infty}\frac1N\big[A,\big(U_{\phi,\pi^{(\ell,m)},j}\big)^N\big]
\big(U_{\phi,\pi^{(\ell,m)},j}\big)^{-N}
$$
exists and is equal to
$i\big(\d\pi^{(\ell,m)}\big)_{I_2}\big((P_\phi M_\phi)(\;\!\cdot\;\!)\big)$, and
$
\lim_{N\to\infty}
\big\langle\varphi,\big(U_{\phi,\pi^{(\ell,m)},j}\big)^N\psi\big\rangle=0
$
for each $\varphi\in\ker(D_{\phi,\ell,m})^\perp$ and $\psi\in\H^{(\pi^{(\ell,m)})}_j$.
Therefore, $U_\phi$ is mixing in the subspace
$$
\bigoplus_{m\in\Z}\;\bigoplus_{\ell\in\N}\;\bigoplus_{j\in\{0,\ldots,\ell\}}
\ker(D_{\phi,\ell,m})^\perp\subset\H.
$$

To give a more explicit description of the continuous spectrum of $U_\phi$, we assume
from now on that $T_\phi$ is uniquely ergodic (another possibility would be to assume
$F_1$ uniquely ergodic and $\pi^{(\ell,m)}\circ\phi$ diagonal as in Lemma
\ref{lemma_diagonal}; we refer to \cite[Sec.~4.3]{Tie15_2} for results in that
direction when $X=\T^d$). In this case, Lemma \ref{lemma_ergodic} and Remark
\ref{remark_semisimple}(a) imply that
$D_{\phi,\ell,m}=i\big(\d\pi^{(\ell,m)}\big)_{I_2}(M_{\phi,\star})$ with
$$
M_{\phi,\star}
=P_{\Ad}\left(\int_X\d\mu_X(x)\,M_\phi(x)\right)
\in z\big(\fraku(2)\big),
$$
where $z\big(\fraku(2)\big)=\{isI_2\mid s\in\R\}$ is the center of the Lie algebra
$\fraku(2)$. Therefore, $M_{\phi,\star}=is_\phi I_2$ for some $s_\phi\in\R$, and we
obtain from \eqref{eq_coeff_diagonal} and \eqref{eq_coeff_U(2)} that
\begin{align*}
i\big(\d\pi^{(\ell,m)}\big)_{I_2}(M_{\phi,\star})_{j,k}
&=i\;\!\frac\d{\d t}\Big|_{t=0}\;\!\pi^{(\ell,m)}_{j,k}\big(\e^{its_\phi I_2}\big)\\
&=i\;\!\frac\d{\d t}\Big|_{t=0}\;\!\e^{its_\phi(2m-\ell)}\pi_{jk}^{(\ell)}(I_2)\\
&=-s_\phi\;\!(2m-\ell)\;\!j\;\!!(\ell-j)!\;\!\delta_{j,k}.
\end{align*}
So, we deduce from Remark \ref{remark_kernel} that
\begin{equation}\label{eq_kernel_U(2)}
\ker(D_{\phi,\ell,m})^\perp
=\bigoplus_{k\in\{0,\ldots,\ell\},\,s_\phi(2m-\ell)\ne0}
\ltwo(X,\mu_X)\otimes\big\{\pi^{(\ell,m)}_{jk}\big\}
=\begin{cases}
\H^{(\pi^{(\ell,m)})}_j & \hbox{if $s_\phi\;\!(2m-\ell)\ne0$}\\
\hfil\{0\} & \hbox{if $s_\phi\;\!(2m-\ell)=0$.}
\end{cases}
\end{equation}

Combining what precedes with Corollary \ref{cor_mixing_D_pi}, we obtain the following
result for the mixing property of the Koopman operator $U_\phi:$

\begin{Theorem}\label{thm_mixing_U(2)}
Assume that $\phi\in C^1\big(X,\U(2)\big)$ and that $T_\phi$ is uniquely ergodic. Then,
$$
\lim_{N\to\infty}\big\langle\varphi,
\big(U_{\phi,\pi^{(\ell,m)},j}\big)^N\psi\big\rangle=0
\quad\hbox{for each $\varphi\in\ker(D_{\phi,\ell,m})^\perp$ and
$\psi\in\H^{(\pi^{(\ell)})}_j$,}
$$
with $\ker(D_{\phi,\ell,m})^\perp$ given by \eqref{eq_kernel_U(2)}. In particular, if
$s_\phi\ne0$, then $U_\phi$ is mixing in the subspace
$$
\H_{\rm mix}
:=\bigoplus_{m\in\Z}\;\bigoplus_{\ell\in\N\setminus\{2m\}}\;
\bigoplus_{j\in\{0,\ldots,\ell\}}\H^{(\pi^{(\ell,m)})}_j
\subset\H.
$$
\end{Theorem}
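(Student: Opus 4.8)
The plan is to obtain both assertions as essentially immediate consequences of Corollary~\ref{cor_mixing_D_pi} applied with $G=\U(2)$ and $\pi=\pi^{(\ell,m)}$. First I would check that the hypotheses of Corollary~\ref{cor_mixing_D_pi} are met: here $\phi\in C^1\big(X,\U(2)\big)$ and $T_\phi$ is uniquely ergodic, which is precisely the second alternative in that corollary's assumptions. Hence the strong limit $D_{\phi,\ell,m}=D_{\phi,\pi^{(\ell,m)}}$ exists and equals $i\big(\d\pi^{(\ell,m)}\big)_{I_2}(M_{\phi,\star})$, and part~(a) of the corollary yields
$$
\lim_{N\to\infty}\big\langle\varphi,\big(U_{\phi,\pi^{(\ell,m)},j}\big)^N\psi\big\rangle=0
\quad\hbox{for }\varphi\in\ker(D_{\phi,\ell,m})^\perp\hbox{ and }\psi\in\H^{(\pi^{(\ell,m)})}_j,
$$
for every $\ell\in\N$, $m\in\Z$ and $j\in\{0,\ldots,\ell\}$.

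Next, I would invoke the description of $\ker(D_{\phi,\ell,m})^\perp$ recorded in \eqref{eq_kernel_U(2)}, which has already been derived in the discussion preceding the theorem by combining Lemma~\ref{lemma_ergodic} and Remark~\ref{remark_semisimple}(a) (which force $M_{\phi,\star}=is_\phi I_2\in z(\fraku(2))$ for some $s_\phi\in\R$), the ensuing computation of $i\big(\d\pi^{(\ell,m)}\big)_{I_2}(M_{\phi,\star})$ as the diagonal matrix with entries $-s_\phi\;\!(2m-\ell)\;\!j\;\!!(\ell-j)!$, and Remark~\ref{remark_kernel}. Combined with the previous paragraph this establishes the first assertion. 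For the ``in particular'' statement, note that when $s_\phi\ne0$ one has $s_\phi(2m-\ell)\ne0$ if and only if $\ell\ne2m$, so \eqref{eq_kernel_U(2)} reads $\ker(D_{\phi,\ell,m})^\perp=\H^{(\pi^{(\ell,m)})}_j$ for $\ell\ne2m$ and $\ker(D_{\phi,\ell,m})^\perp=\{0\}$ for $\ell=2m$. Since $U_\phi$ is reduced by the orthogonal decomposition \eqref{eq_decompo} and $\widehat{\U(2)}=\{\pi^{(\ell,m)}\}_{\ell\in\N,m\in\Z}$, the per-subspace limits just obtained give $\lim_{N\to\infty}\langle\varphi,U_\phi^N\psi\rangle=0$ for every $\varphi$ in the algebraic direct sum of the subspaces $\H^{(\pi^{(\ell,m)})}_j$ with $\ell\ne2m$ (whose closure is $\H_{\rm mix}$) and every $\psi\in\H$; a routine $\varepsilon$-approximation argument, using $\|U_\phi^N\|=1$ and the density of this algebraic sum in $\H_{\rm mix}$, then extends the conclusion to all $\varphi\in\H_{\rm mix}$.

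This is essentially a bookkeeping proof, so I do not foresee a genuine difficulty. The only step requiring a little care is the passage from mixing in each individual subspace $\H^{(\pi^{(\ell,m)})}_j$ to mixing in the whole (infinite) direct sum $\H_{\rm mix}$, which relies both on the reducibility of $U_\phi$ by \eqref{eq_decompo} and on the density argument above, the uniform bound $\|U_\phi^N\|=1$ being what makes the approximation legitimate.
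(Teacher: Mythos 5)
Your proposal is correct and matches the paper's proof, which is exactly the one-line remark "Combining what precedes with Corollary~\ref{cor_mixing_D_pi}" — you have simply spelled out the bookkeeping (the computation of $D_{\phi,\ell,m}$ via Lemma~\ref{lemma_ergodic} and Remark~\ref{remark_semisimple}(a), the kernel description from Remark~\ref{remark_kernel}, and the density argument for passing to the infinite direct sum) that the paper leaves implicit.
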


Under an additional regularity assumption of Dini-type on the matrix-valued functions
$\L_Y(\pi^{(\ell,m)}\circ\phi)$, one can show that $U_\phi$ has in fact purely
absolutely continuous spectrum in $\H_{\rm mix}:$

\begin{Theorem}\label{thm_ac_U(2)}
Assume that $\phi\in C^1\big(X,\U(2)\big)$, that $T_\phi$ is uniquely ergodic, that
$s_\phi\ne0$, and that
\begin{equation}\label{eq_cond_pi_lm}
\int_0^1\frac{\d t}t\,\big\|\L_Y(\pi^{(\ell,m)}\circ\phi)_{jk}\circ F_t
-\L_Y(\pi^{(\ell,m)}\circ\phi)_{jk}\big\|_{\linf(X,\mu_X)}<\infty
\end{equation}
for each $m\in\Z$, $\ell\in\N\setminus\{2m\}$ and $j,k\in\{0,\ldots,\ell\}$. Then,
$U_\phi$ has purely absolutely continuous spectrum in the subspace
$$
\H_{\rm mix}
=\bigoplus_{m\in\Z}\;\bigoplus_{\ell\in\N\setminus\{2m\}}\;
\bigoplus_{j\in\{0,\ldots,\ell\}}\H^{(\pi^{(\ell,m)})}_j
\subset\H.
$$
\end{Theorem}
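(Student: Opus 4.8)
The plan is to apply Corollary~\ref{cor_absolute_D_pi} to each representation $\pi^{(\ell,m)}$ with $\ell\in\N\setminus\{2m\}$, and then to take the orthogonal direct sum of the resulting statements over all $m\in\Z$, $\ell\in\N\setminus\{2m\}$ and $j\in\{0,\ldots,\ell\}$, which is exactly $\H_{\rm mix}$. Hypotheses (i) and (ii) of Corollary~\ref{cor_absolute_D_pi} hold by assumption ($\phi\in C^1\big(X,\U(2)\big)$ and $T_\phi$ uniquely ergodic). For hypothesis (iv), I would invoke the computation carried out just before the theorem: since $\U(2)$ is connected, Lemma~\ref{lemma_ergodic} together with Remark~\ref{remark_semisimple}(a) gives $M_{\phi,\star}=is_\phi I_2\in z\big(\fraku(2)\big)$, and \eqref{eq_coeff_diagonal} and \eqref{eq_coeff_U(2)} then yield
\begin{equation*}
i\big(\d\pi^{(\ell,m)}\big)_{I_2}(M_{\phi,\star})
=-s_\phi(2m-\ell)\,\diag\big(0!\,\ell!,\,1!(\ell-1)!,\,\ldots,\,\ell!\,0!\big),
\end{equation*}
so that $\det\big((\d\pi^{(\ell,m)})_{I_2}(M_{\phi,\star})\big)=\big(-s_\phi(2m-\ell)\big)^{\ell+1}\prod_{j=0}^\ell j!(\ell-j)!$, which is nonzero exactly because $s_\phi\ne0$ and $\ell\ne2m$.

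The real work is hypothesis (iii), namely $M_{\pi^{(\ell,m)}\circ\phi}\in C^{+0}\big(A_{D_{\phi,\ell,m}}\big)$, and I would handle it as in the proof of Theorem~\ref{thm_absolute_U_delta}. Here $D_{\phi,\ell,m}=i\big(\d\pi^{(\ell,m)}\big)_{I_2}(M_{\phi,\star})$ is multiplication by the constant diagonal matrix displayed above, and since $A$ preserves each summand $\ltwo(X,\mu_X)\otimes\{\pi^{(\ell,m)}_{jk}\}$ it commutes with $D_{\phi,\ell,m}$; hence $A_{D_{\phi,\ell,m}}=A D_{\phi,\ell,m}+D_{\phi,\ell,m}A=2D_{\phi,\ell,m}A$, and as $\ell\ne2m$ and $s_\phi\ne0$ all diagonal entries of $D_{\phi,\ell,m}$ are nonzero, so $D_{\phi,\ell,m}$ is boundedly invertible and $C^{+0}(A)\subset C^{+0}\big(A_{D_{\phi,\ell,m}}\big)$. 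It then suffices to show $M_{\pi^{(\ell,m)}\circ\phi}\in C^{+0}(A)$. Writing $M_{\pi^{(\ell,m)}\circ\phi}=\L_Y(\pi^{(\ell,m)}\circ\phi)\cdot(\pi^{(\ell,m)}\circ\phi)^{-1}$, the hypothesis $\phi\in C^1\big(X,\U(2)\big)$ gives $(\pi^{(\ell,m)}\circ\phi)^{-1}\in C^1(A)\subset C^{+0}(A)$ (see \cite[Prop.~5.1.6(a)]{ABG96}), so by \cite[Prop.~5.2.3(b)]{ABG96} one is reduced to $\L_Y(\pi^{(\ell,m)}\circ\phi)\in C^{+0}(A)$; and since conjugation by $\e^{itA}$ carries a matrix-valued multiplication operator with symbol $f$ into the one with symbol $f\circ F_t$, and the operator norm on $\B(\C^{\ell+1})$ is controlled by the maximum of the entrywise moduli, this follows from the Dini-type bound \eqref{eq_cond_pi_lm}.

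Having verified all four hypotheses, Corollary~\ref{cor_absolute_D_pi} gives that $U_{\phi,\pi^{(\ell,m)},j}$ has purely absolutely continuous spectrum for every $m\in\Z$, $\ell\in\N\setminus\{2m\}$ and $j\in\{0,\ldots,\ell\}$; taking the direct sum of these restrictions over all such triples yields purely absolutely continuous spectrum for $U_\phi$ on $\H_{\rm mix}$, and by \eqref{eq_kernel_U(2)} this $\H_{\rm mix}$ is precisely the direct sum of the subspaces $\ker(D_{\phi,\ell,m})^\perp$ over $\ell\ne2m$, consistently with Theorem~\ref{thm_mixing_U(2)}. The step I expect to be the main obstacle is hypothesis (iii): the passage $C^{+0}(A)\subset C^{+0}\big(A_{D_{\phi,\ell,m}}\big)$ is delicate because, for a general (non-diagonal) cocycle $\phi$, the off-diagonal entries of $M_{\pi^{(\ell,m)}\circ\phi}$ linking two summands carrying distinct eigenvalues of $D_{\phi,\ell,m}$ get conjugated by $\e^{itA_{D_{\phi,\ell,m}}}$ through flows of different speeds, so the naive componentwise estimate is not by itself conclusive; one may first need to bring $\pi^{(\ell,m)}\circ\phi$ into a form adapted to the eigenspace decomposition of $D_{\phi,\ell,m}$ (for instance, cohomologous to a diagonal cocycle, exactly as in the $\SU(2)$ case treated via Lemma~\ref{lemma_cohomologous_SU(2)}), and it is this reduction, rather than the abstract commutator machinery, that carries the weight of the argument.
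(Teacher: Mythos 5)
Your proposal tracks the paper's proof of Theorem~\ref{thm_ac_U(2)} step by step: apply Corollary~\ref{cor_absolute_D_pi} on each $\H^{(\pi^{(\ell,m)})}_j$, read off hypotheses (i),(ii) directly, compute $\det\big((\d\pi^{(\ell,m)})_{I_2}(M_{\phi,\star})\big)\ne0$ from \eqref{eq_coeff_diagonal}--\eqref{eq_coeff_U(2)} for (iv), and reduce (iii) to the Dini bound \eqref{eq_cond_pi_lm} through $A_{D_{\phi,\ell,m}}=2AD_{\phi,\ell,m}$, the inclusion $C^{+0}(A)\subset C^{+0}\big(A_{D_{\phi,\ell,m}}\big)$, and \cite[Prop.~5.1.6(a) \& 5.2.3(b)]{ABG96}. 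That is precisely how the paper argues. Your closing worry is the right one to have: the bare inclusion $C^{+0}(A)\subset C^{+0}\big(A_{D_{\phi,\ell,m}}\big)$ is \emph{not} a general operator inclusion when $D_{\phi,\ell,m}$ has several distinct eigenvalues $\lambda_k$ (as it does for $\ell\ge2$), since conjugation of a matrix-valued multiplication operator $S$ by $\e^{itA_{D_{\phi,\ell,m}}}$ sends the $(k,k')$-block to $\e^{-2it\lambda_kA}S_{kk'}\e^{2it\lambda_{k'}A}$, and if $\lambda_k\ne\lambda_{k'}$ and $S_{kk'}\ne0$ the piece $S_{kk'}\big(\e^{2it(\lambda_{k'}-\lambda_k)A}-1\big)$ does \emph{not} tend to $0$ in operator norm as $t\to0$ ($A$ is unbounded). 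So the paper's terse ``Thus, $C^{+0}(A)\subset C^{+0}(A_{D_{\phi,\ell,m}})$'' is, taken literally, too strong.

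What closes this gap — and what neither the paper's wording nor your proposal makes explicit — is that the cross-eigenspace blocks vanish automatically, so no cohomological normalization of $\phi$ is needed. Since $T_\phi$ is uniquely ergodic and $\U(2)$ is connected, $M_{\phi,\star}=is_\phi I_2\in z\big(\fraku(2)\big)$ by Lemma~\ref{lemma_ergodic} and Remark~\ref{remark_semisimple}(a), and the last sentence of Remark~\ref{remark_semisimple}(a) gives $\big(\d\pi^{(\ell,m)}\big)_{I_2}(M_{\phi,\star})\in z\big(\g_{\pi^{(\ell,m)}}\big)$ because the surjective differential carries center to center. Since $M_{\pi^{(\ell,m)}\circ\phi}(x)\in\g_{\pi^{(\ell,m)}}$ for every $x$, and since $\pi^{(\ell,m)}\circ\phi$ takes values in $\pi^{(\ell,m)}\big(\U(2)\big)$, the constant matrix $D_{\phi,\ell,m}$ commutes pointwise with $M_{\pi^{(\ell,m)}\circ\phi}$ and with $(\pi^{(\ell,m)}\circ\phi)^{\pm1}$, hence with $U_{\phi,\pi^{(\ell,m)},j}$ and with $[A,U_{\phi,\pi^{(\ell,m)},j}]=iM_{\pi^{(\ell,m)}\circ\phi}\,U_{\phi,\pi^{(\ell,m)},j}$. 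All the operators entering the Dini integrals in the proof of Lemma~\ref{U_C_1_A_DN}(b) thus lie in the commutant of $D_{\phi,\ell,m}$, are block-diagonal with respect to its eigenspaces, and on each eigenspace $\e^{itA_{D_{\phi,\ell,m}}}$ is just the rescaled group $\e^{2it\lambda_kA}$; the Dini integral is invariant under rescaling of $t$, so for these operators $C^{+0}(A)\subset C^{+0}\big(A_{D_{\phi,\ell,m}}\big)$ does hold. With this observation inserted, your argument is complete. The reduction you conjecture might be necessary (cohomologizing $\pi^{(\ell,m)}\circ\phi$ to a diagonal cocycle as in Lemma~\ref{lemma_cohomologous_SU(2)}) is in fact neither available nor needed here: in the $\SU(2)$ case the degree $P_\phi M_\phi$ is a non-central $\su(2)$-valued function and the conjugation $\zeta$ is what makes $M_{\pi^{(\ell)}\circ\delta}$ commute with $D_{\delta,\ell}$; in the $\U(2)$ case with $T_\phi$ uniquely ergodic, that commutation is automatic because the degree is already central.
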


\begin{proof}
The claim follows from an application of Corollary \ref{cor_absolute_D_pi}. Since the
assumptions (i), (ii) and (iv) of Corollary \ref{cor_absolute_D_pi} are easily
verified under our hypotheses, we only explain how to verify the assumption (iv), that
is, the inclusion
$$
M_{\pi^{(\ell,m)}\circ\phi}
=\L_Y(\pi^{(\ell,m)}\circ\phi)\cdot(\pi^{(\ell,m)}\circ\phi)^{-1}
\in C^{+0}\big(A_{D_{\phi,\ell,m}}\big).
$$
Since $D_{\phi,\ell,m}$ is the multiplication operator by the constant diagonal matrix
$i\big(\d\pi^{(\ell,m)}\big)_{I_2}(M_{\phi,\star})$, the operator
$A_{D_{\phi,\ell,m}}$ satisfies
$$
A_{D_{\phi,\ell,m}}
=AD_{\phi,\ell,m}+D_{\phi,\ell,m}A
=2AD_{\phi,\ell,m}.
$$
Thus, $C^{+0}(A)\subset C^{+0}\big(A_{D_{\phi,\ell,m}}\big)$. Furthermore, the
assumption $\phi\in C^1\big(X,\U(2)\big)$ implies that
$(\pi^{(\ell,m)}\circ\phi)^{-1}\in C^1(A)$ (see \cite[Prop.~5.1.6(a)]{ABG96}).
Therefore, the inclusion
$M_{\pi^{(\ell,m)}\circ\phi}\in C^{+0}\big(A_{D_{\phi,\ell,m}}\big)$ is satisfied if
$\L_Y(\pi^{(\ell,m)}\circ\phi)\in C^{+0}(A)$ (see \cite[Prop.~5.2.3(b)]{ABG96}), which
in turn follows from \eqref{eq_cond_pi_lm}.
\end{proof}

In the rest of the section, we present an explicit example of skew-product $T_\phi$
satisfying the assumptions of Theorems \ref{thm_mixing_U(2)} and \ref{thm_ac_U(2)},
namely, a skew-product $T_\phi$ with $\phi\in C^\infty\big(\T^d;\U(2)\big)$
($d\in\N^*$), $T_\phi$ uniquely ergodic, and nonzero degree $M_{\phi,\star}$.

\begin{Example}\label{ex_skew_product_U(2)}
It is known that there exists a Lie group isomorphism
$$
h:\SO(3,\R)\times\T\to\U(2).
$$
Therefore, if we find a cocycle $\delta\in C^\infty\big(\T^d,(\SO(3,\R)\times\T)\big)$
with associated skew product
$$
T_\delta:\T^d\times(\SO(3,\R)\times\T)\to\T^d\times(\SO(3,\R)\times\T),
~~\big(x,(g,z)\big)\mapsto\big(F_1(x),(g,z)\;\!\delta(x)\big),
$$
uniquely ergodic and with $M_{\delta,\star}\ne0$, then the skew product
$$
T_\phi:\T^d\times\U(2)\to\T^d\times\U(2),
~~(x,\widetilde g)\mapsto\big(F_1(x),\widetilde g\;\!\phi(x)\big),
\quad\hbox{with}\quad
\phi:=h\circ\delta\in C^\infty\big(\T^d,\U(2)\big),
$$
will also be uniquely ergodic and with nonzero degree $M_{\phi,\star}\ne0$ (the first
fact is standard and the second fact follows from Lemma
\ref{lemma_inv_degrees}(a)). So, we restrict our attention to the construction of
the skew-product $T_\delta$.

We know from the works of L. H. Eliasson and X. Hou (see \cite[Sec.~1]{Eli02} and
\cite[Thm.~1.2]{Hou11}) that there exist vectors
$\alpha=(\alpha_1,\ldots,\alpha_d)\in\R^d$ satisfying a Diophantine condition and
cocycles $\delta_1\in C^\infty\big(\T^d,\SO(3,\R)\big)$ such that the skew product
$$
T_{\delta_1}:\T^d\times\SO(3,\R)\to\T^d\times\SO(3,\R),
~~(x,g)\mapsto\big(x_1\e^{2\pi i\alpha_1},\ldots,x_d\e^{2\pi i\alpha_d},
g\;\!\delta_1(x)\big),
$$
is uniquely ergodic and admits a $C^\infty$ flow $\{F_{1,t}\}_{t\in\R}$. Using
$\alpha$ and $\delta_1$, we define the cocyle
$$
\delta:\T^d\to\SO(3,\R)\times\T,~~x\mapsto\big(\delta_1(x),x_1\big),
$$
and the skew product
\begin{align*}
T_\delta:\T^d\times\big(\SO(3,\R)\times\T\big)
&\to\T^d\times\big(\SO(3,\R)\times\T\big),\\
\big(x,(g,z)\big)&\mapsto\big(x_1\e^{2\pi i\alpha_1},\ldots,x_d\e^{2\pi i\alpha_d},
(g,z)\;\!\delta(x)\big)\\
&~~~~~=\big(x_1\e^{2\pi i\alpha_1},\ldots,x_d\e^{2\pi i\alpha_d},
\big(g\;\!\delta_1(x),zx_1\big)\big).
\end{align*}
We also define the cocycle
$$
\gamma:\T^d\times\SO(3,\R)\to\T,~~(x,g)\mapsto x_1,
$$
and the skew product
\begin{align*}
T_{\gamma}:\big(\T^d\times\SO(3,\R)\big)\times\T
&\to\big(\T^d\times\SO(3,\R)\big)\times\T,\\
\big((x,g),z\big)&\mapsto\big(T_{\delta_1}(x,g),z\;\!\gamma(x,g)\big)
=\big(\big(x_1\e^{2\pi i\alpha_1},\ldots,x_d\e^{2\pi i\alpha_d},
g\;\!\delta_1(x)\big),zx_1\big),
\end{align*}
and note that
$$
T_\delta\big(x,(g,z)\big)=T_{\gamma}\big((x,g),z\big)
\quad\hbox{for each $x\in\T^d$, $g\in\SO(3,\R)$, $z\in\T$,}
$$
due to the identification
$\T^d\times\big(\SO(3,\R)\times\T\big)=\big(\T^d\times\SO(3,\R)\big)\times\T$. So, it
is sufficient to show that $T_{\gamma}$ is uniquely ergodic and with nonzero degree
$M_{\gamma,\star}\ne0$.

Since $T_{\delta_1}$ is uniquely ergodic and $\pi\circ\gamma$ is scalar for each
representation $\pi\in\widehat\T$, we can apply Lemma \ref{lemma_diagonal} to obtain
for $\big(\mu_{\T^d}\otimes\mu_{\SO(3,\R)}\big)$-almost every
$(x,g)\in\T^d\times\SO(3,\R)$ that
$$
\big(P_\gamma M_\gamma\big)(x,g)
=M_{\gamma,\star}
=\int_{\T^d\times\SO(3,\R)}\d\big(\mu_{\T^d}\otimes\mu_{\SO(3,\R)}\big)(x,g)
\,M_\gamma(x,g)
$$
with $M_\gamma(x,g)$ given by
$$
M_\gamma(x,g)
=\left(\frac\d{\d t}\Big|_{t=0}\gamma\big(F_{1,t}(x)\big)\right)\cdot\gamma(x,g)^{-1}
=\left(\frac\d{\d t}\Big|_{t=0}\;\!x_1\e^{2\pi it\alpha_1}\right)\cdot x_1^{-1}
=2\pi i\alpha_1.
$$
Thus, $M_{\gamma,\star}=2\pi i\alpha_1\ne0$ since $\alpha_1\in\R\setminus\Q$.
Therefore, it only remains to show that $T_{\gamma}$ is uniquely ergodic. For this, we
first note that since $\alpha_1\in\R\setminus\Q$, the results of Section
\ref{section_torus} (see \eqref{eq_ac_torus_2}) imply that $T_\gamma$ has purely
absolutely continuous spectrum in the orthocomplement of the functions depending only
on the variables in $\T^d\times\SO(3,\R)$. Therefore, any eigenvector of $U_\gamma$
with eigenvalue $1$, that is, any function
$
\psi\in\ltwo\big(\big(\T^d\times\SO(3,\R)\big)\times\T,
\mu_{\T^d}\otimes\mu_{\SO(3,\R)}\otimes\mu_\T\big)
$
such that $U_\gamma\;\!\psi=\psi$ must satisfy $\psi=\eta\otimes1$ for some
$\eta\in\ltwo\big(\T^d\times\SO(3,\R),\mu_{\T^d}\otimes\mu_{\SO(3,\R)}\big)$. So,
$$
U_\gamma\;\!\psi=\psi
~\Leftrightarrow~U_\gamma(\eta\otimes1)=\eta\otimes1
~\Leftrightarrow~U_{\delta_1}\eta=\eta,
$$
and thus $\eta$ is an eigenvector of $U_{\delta_1}$ with eigenvalue $1$. It follows
that $\eta$ is constant $\big(\mu_{\T^d}\otimes\mu_{\SO(3,\R)}\big)$-almost everywhere
due to the ergodicity of $T_{\delta_1}$. Therefore, $\psi$ is constant
$\big(\mu_{\T^d}\otimes\mu_{\SO(3,\R)}\otimes\mu_\T\big)$-almost everywhere, and
$T_\gamma$ is ergodic. This implies that $T_\gamma$ is uniquely ergodic because unique
ergodicity and ergodicity are equivalent properties for skew products such as
$T_\gamma$ (see \cite[Thm.~4.21]{EW11}).
\end{Example}


\begin{thebibliography}{10}

\bibitem{ABG96}
W.~O. Amrein, A.~Boutet~de Monvel, and V.~Georgescu.
\newblock {\em {$C_0$}-groups, commutator methods and spectral theory of
  {$N$}-body {H}amiltonians}, volume 135 of {\em Progress in Mathematics}.
\newblock Birkh\"auser Verlag, Basel, 1996.

\bibitem{Anz51}
H.~Anzai.
\newblock Ergodic skew product transformations on the torus.
\newblock {\em Osaka Math. J.}, 3:83--99, 1951.

\bibitem{BLU07}
A.~Bonfiglioli, E.~Lanconelli, and F.~Uguzzoni.
\newblock {\em Stratified {L}ie groups and potential theory for their
  sub-{L}aplacians}.
\newblock Springer Monographs in Mathematics. Springer, Berlin, 2007.

\bibitem{BtD85}
T.~Br{\"o}cker and T.~tom Dieck.
\newblock {\em Representations of compact {L}ie groups}, volume~98 of {\em
  Graduate Texts in Mathematics}.
\newblock Springer-Verlag, New York, 1985.

\bibitem{Cho87}
G.~H. Choe.
\newblock {\em S{PECTRAL} {PROPERTIES} {OF} {COCYCLES}}.
\newblock ProQuest LLC, Ann Arbor, MI, 1987.
\newblock Thesis (Ph.D.)--University of California, Berkeley.

\bibitem{CFS82}
I.~P. Cornfeld, S.~V. Fomin, and Y.~G. Sina{\u\i}.
\newblock {\em Ergodic theory}, volume 245 of {\em Grundlehren der
  Mathematischen Wissenschaften [Fundamental Principles of Mathematical
  Sciences]}.
\newblock Springer-Verlag, New York, 1982.
\newblock Translated from the Russian by A. B. Sosinski{\u\i}.

\bibitem{DK00}
J.~J. Duistermaat and J.~A.~C. Kolk.
\newblock {\em Lie groups}.
\newblock Universitext. Springer-Verlag, Berlin, 2000.

\bibitem{EW11}
M.~Einsiedler and T.~Ward.
\newblock {\em Ergodic theory with a view towards number theory}, volume 259 of
  {\em Graduate Texts in Mathematics}.
\newblock Springer-Verlag London Ltd., London, 2011.

\bibitem{Eli02}
L.~H. Eliasson.
\newblock Ergodic skew-systems on {$\Bbb T^d\times{\rm SO}(3,\Bbb R)$}.
\newblock {\em Ergodic Theory Dynam. Systems}, 22(5):1429--1449, 2002.

\bibitem{FRT13}
C.~Fern\'andez, S.~Richard, and R.~Tiedra~de Aldecoa.
\newblock Commutator methods for unitary operators.
\newblock {\em J. Spectr. Theory}, 3(3):271--292, 2013.

\bibitem{Fra00}
K.~Fr{\polhk{a}}czek.
\newblock Circle extensions of {${\bf Z}^d$}-rotations on the {$d$}-dimensional
  torus.
\newblock {\em J. London Math. Soc. (2)}, 61(1):139--162, 2000.

\bibitem{Fra00_2}
K.~Fr{\polhk{a}}czek.
\newblock On cocycles with values in the group {$\textrm{SU}(2)$}.
\newblock {\em Monatsh. Math.}, 131(4):279--307, 2000.

\bibitem{Fra04}
K.~Fr{\c{a}}czek.
\newblock On the degree of cocycles with values in the group
  {$\textrm{SU}(2)$}.
\newblock {\em Israel J. Math.}, 139:293--317, 2004.

\bibitem{Fur61}
H.~Furstenberg.
\newblock Strict ergodicity and transformation of the torus.
\newblock {\em Amer. J. Math.}, 83:573--601, 1961.

\bibitem{GLL91}
P.~Gabriel, M.~Lema{\'n}czyk, and P.~Liardet.
\newblock Ensemble d'invariants pour les produits crois\'es de {A}nzai.
\newblock {\em M\'em. Soc. Math. France (N.S.)}, (47):102, 1991.

\bibitem{Hal15}
B.~Hall.
\newblock {\em Lie groups, {L}ie algebras, and representations}, volume 222 of
  {\em Graduate Texts in Mathematics}.
\newblock Springer, Cham, second edition, 2015.
\newblock An elementary introduction.

\bibitem{Hou11}
X.~Hou.
\newblock Ergodicity, minimality and reducibility of cocycles on some compact
  groups.
\newblock {\em Taiwanese J. Math.}, 15(3):1247--1259, 2011.

\bibitem{Iwa97_1}
A.~Iwanik.
\newblock Anzai skew products with {L}ebesgue component of infinite
  multiplicity.
\newblock {\em Bull. London Math. Soc.}, 29(2):195--199, 1997.

\bibitem{Iwa97_2}
A.~Iwanik.
\newblock Spectral properties of skew-product diffeomorphisms of tori.
\newblock {\em Colloq. Math.}, 72(2):223--235, 1997.

\bibitem{ILR93}
A.~Iwanik, M.~Lema{\'n}czyk, and D.~Rudolph.
\newblock Absolutely continuous cocycles over irrational rotations.
\newblock {\em Israel J. Math.}, 83(1-2):73--95, 1993.

\bibitem{Kar13}
N.~Karaliolios.
\newblock {\em Global aspects of the reducibility of quasiperiodic cocycles in
  semisimple compact Lie groups}.
\newblock PhD thesis, Universit\'e Pierre et Marie Curie, Paris, 2013.
\newblock \url{https://tel.archives-ouvertes.fr/tel-00777911/document}.

\bibitem{Kna02}
A.~W. Knapp.
\newblock {\em Lie groups beyond an introduction}, volume 140 of {\em Progress
  in Mathematics}.
\newblock Birkh\"auser Boston, Inc., Boston, MA, second edition, 2002.

\bibitem{Kre85}
U.~Krengel.
\newblock {\em Ergodic theorems}, volume~6 of {\em de Gruyter Studies in
  Mathematics}.
\newblock Walter de Gruyter \& Co., Berlin, 1985.
\newblock With a supplement by Antoine Brunel.

\bibitem{Kus74}
A.~G. Kushnirenko.
\newblock Spectral properties of certain dynamical systems with polynomial
  dispersal.
\newblock {\em Moscow Univ. Math. Bull.}, 29(1):82--87, 1974.

\bibitem{Mes61}
A.~Messiah.
\newblock {\em Quantum mechanics. {V}ol. {I}}.
\newblock Translated from the French by G. M. Temmer. North-Holland Publishing
  Co., Amsterdam; Interscience Publishers Inc., New York, 1961.

\bibitem{Ner88}
M.~G. Nerurkar.
\newblock On the construction of smooth ergodic skew-products.
\newblock {\em Ergodic Theory Dynam. Systems}, 8(2):311--326, 1988.

\bibitem{RS80}
M.~Reed and B.~Simon.
\newblock {\em Methods of modern mathematical physics. {I}}.
\newblock Academic Press Inc. [Harcourt Brace Jovanovich Publishers], New York,
  second edition, 1980.
\newblock Functional analysis.

\bibitem{RT15}
S.~Richard and R.~Tiedra~de Aldecoa.
\newblock Commutator criteria for strong mixing {II}. {M}ore general and
  simpler.
\newblock \url{http://arxiv.org/abs/1510.00201}.

\bibitem{Sug90}
M.~Sugiura.
\newblock {\em Unitary representations and harmonic analysis}, volume~44 of
  {\em North-Holland Mathematical Library}.
\newblock North-Holland Publishing Co., Amsterdam, second edition, 1990.
\newblock An introduction.

\bibitem{Tao11}
T.~Tao.
\newblock {\em An introduction to measure theory}, volume 126 of {\em Graduate
  Studies in Mathematics}.
\newblock American Mathematical Society, Providence, RI, 2011.

\bibitem{Tie15_2}
R.~Tiedra~de Aldecoa.
\newblock The absolute continuous spectrum of skew products of compact {L}ie
  groups.
\newblock {\em Israel J. Math.}, 208(1):323--350, 2015.

\bibitem{Tie15_3}
R.~Tiedra~de Aldecoa.
\newblock Commutator criteria for strong mixing.
\newblock {\em Ergodic Theory Dynam. Systems}, Online First, 2015.

\bibitem{Tie15}
R.~Tiedra~de Aldecoa.
\newblock Commutator methods for the spectral analysis of uniquely ergodic
  dynamical systems.
\newblock {\em Ergodic Theory Dynam. Systems}, 35(3):944--967, 2015.

\bibitem{Vil68}
N.~J. Vilenkin.
\newblock {\em Special functions and the theory of group representations}.
\newblock Translated from the Russian by V. N. Singh. Translations of
  Mathematical Monographs, Vol. 22. American Mathematical Society, Providence,
  R. I., 1968.

\bibitem{Wei80}
J.~Weidmann.
\newblock {\em Linear operators in {H}ilbert spaces}, volume~68 of {\em
  Graduate Texts in Mathematics}.
\newblock Springer-Verlag, New York, 1980.
\newblock Translated from the German by Joseph Sz{\"u}cs.

\bibitem{Wig59}
E.~P. Wigner.
\newblock {\em Group theory and its application to the quantum mechanics of
  atomic spectra}.
\newblock Expanded and improved ed. Translated from the German by J. J.
  Griffin. Pure and Applied Physics. Vol. 5. Academic Press, New York-London,
  1959.

\end{thebibliography}

\def\cprime{$'$} \def\polhk#1{\setbox0=\hbox{#1}{\ooalign{\hidewidth
  \lower1.5ex\hbox{`}\hidewidth\crcr\unhbox0}}}
  \def\polhk#1{\setbox0=\hbox{#1}{\ooalign{\hidewidth
  \lower1.5ex\hbox{`}\hidewidth\crcr\unhbox0}}}
  \def\polhk#1{\setbox0=\hbox{#1}{\ooalign{\hidewidth
  \lower1.5ex\hbox{`}\hidewidth\crcr\unhbox0}}} \def\cprime{$'$}
  \def\cprime{$'$} \def\polhk#1{\setbox0=\hbox{#1}{\ooalign{\hidewidth
  \lower1.5ex\hbox{`}\hidewidth\crcr\unhbox0}}}
  \def\polhk#1{\setbox0=\hbox{#1}{\ooalign{\hidewidth
  \lower1.5ex\hbox{`}\hidewidth\crcr\unhbox0}}} \def\cprime{$'$}
  \def\cprime{$'$} \def\cprime{$'$}


\end{document}